\numberwithin{equation}{section}
\newcommand{\textcyr}[1]{%
{\fontencoding{OT2}\fontfamily{wncyr}\fontseries{m}\fontshape{n}
\selectfont #1}}
\newcommand{\sha}{{\mbox{\textcyr{Sh}}}}
\def\ofs{\mathcal O_{\lbe F\lbe,\e \si}}
\def\oks{\mathcal O_{\be K,\e \Sigma}}
\def\si{\Sigma}
\def\pic{{\rm{Pic}}\,}
\def\br{{\rm{Br}}}
\newcommand{\isoto}{\overset{\!\sim}{\to}}
\newcommand{\et}{{\rm {\acute et}}}
\DeclareMathAlphabet{\mathbbmsl}{U}{bbm}{m}{sl}
\newcommand{\fl}{\le{\rm fl}}
\newcommand{\spp}{S^{\le\lle\prime}}
\newcommand{\sppp}{S^{\le\lle\prime\prime}}
\def\sfs{S_{\fl}^{\e\sim}}
\def\sts{S_{\tau}^{\e\sim}}
\def\bg{{\mathbb G}}
\def\si{\Sigma}
\def\hom{{\rm{Hom}}\e}
\definecolor{labelkey}{rgb}{1,0,0}
\newcommand{\spec}{\mathrm{ Spec}\,}
\newcommand{\s}{\mathscr }
\def\e{\kern 0.06em}
\def\be{\kern -.1em}
\def\le{\kern 0.03em}
\def\lle{\kern 0.015em}
\def\lbe{\kern -.025em}
\newcommand{\sh}{\kern -.4em\phantom{a}^{\mathbf{\sim}}}
\newcommand{\lra}{\longrightarrow}
\def\be{\kern -.1em}
\def\le{\kern 0.03em}
\def\lle{\kern 0.04em}
\def\lbe{\kern -.025em}
\newcommand{\Q}{{\mathbb Q}}
\newcommand{\krn}{\mathrm{Ker}\e}
\newcommand{\cok}{\mathrm{Coker}\e}
\def\e{\kern 0.08em}
\newcommand{\img}{\mathrm{Im}\e}
\newtheorem{lemma}{Lemma}[section]
\newtheorem{theorem}[lemma]{Theorem}
\newtheorem{proposition-definition}[lemma]{Proposition-Definition}
\newtheorem{corollary}[lemma]{Corollary}
\newtheorem{proposition}[lemma]{Proposition}
\theoremstyle{definition}
\newtheorem{definition}[lemma]{Definition}
\theoremstyle{remark}
\newtheorem{remark}[lemma]{Remark}
\begin{document}

\input xy     
\xyoption{all}

\title[The norm map and the capitulation kernel]{The norm map and the capitulation kernel}

\subjclass[2010]{Primary 11R29, 14F20}
	
\author{Cristian D. Gonz\'alez-Avil\'es}
\address{Departamento de Matem\'aticas, Universidad de La Serena,
La Serena, Chile} \email{cgonzalez@userena.cl}

\keywords{Capitulation kernel, relative Brauer group, Tate-Shafarevich group, N\'eron-Raynaud class group, ideal class group, quadratic Galois covers.}
	
\thanks{The author was partially supported by Fondecyt grant
1160004}

\begin{abstract} Let $f\colon \spp\to S$ be a finite and faithfully flat morphism of locally noetherian schemes of constant rank $n\geq 2$ and let $G$ be a smooth, commutative and quasi-projective $S$-group scheme with connected fibers. Under certain restrictions on $f$ and $G$, we relate the kernel of the restriction map ${\rm Res}_{\le G}^{(r+1)}\colon H^{\le r+1}\lbe(S_{\et},G\e)\to H^{\le r+1}\lbe(\spp_{\et},G\e)$ in \'etale cohomology, where $r\geq 0$, to a quotient of the kernel of the mod $n$ corestriction map ${\rm Cores}_{\le G}^{(r)}\!/n\colon H^{\le r}\lbe(\spp_{\et},G\e)/n\to H^{\le r}\lbe(S_{\et},G\e)/n$. When $r=0$ and $f$ is a Galois covering with Galois group $\Delta$, our main theorem relates $\krn\e {\rm Res}_{\le G}^{(1)}=H^{\le 1}\be(\Delta,G(\spp\le)\e)$ to the subgroup of $G(\spp\le)$ of sections whose $(\spp\!/\lbe S\e)$-norm lies in $G(S\e)^{n}$. Applications are given to the capitulation problem for N\'eron-Raynaud class groups of tori and Tate-Shafarevich groups of abelian varieties. 
\end{abstract}
	
\maketitle

\section{Introduction}

Let $f\colon \spp\to S$ be a finite and faithfully flat morphism of locally noetherian schemes of constant rank $n\geq 2$ and let $G$ be a smooth, commutative and quasi-projective $S$-group scheme with connected fibers. In this paper we relate the kernel of the restriction map ${\rm Res}_{\le G}^{(r+1)}\colon H^{\le r+1}\lbe(S_{\et},G\e)\to H^{\le r+1}\lbe(\spp_{\et},G\e)$, where $r\geq 0$, to a certain quotient of the kernel of the mod $n$ corestriction (or norm) map ${\rm Cores}_{\le G}^{(r)}\!/n\colon H^{\le r}\lbe(\spp_{\et},G\e)/n\to H^{\le r}\lbe(S_{\et},G\e)/n$ when $(f,G\e)$ is {\it admissible}, which means that the following conditions hold at every point $s\in S$ such that ${\rm char}\, k(\lbe s\lbe)$ divides $n\e$: (i) $f_{\lbe s}\colon \spp\times_{S}\spec k(\lbe s\lbe)\to \spec k(\lbe s\lbe)$ is \'etale (i.e., unramified), and (ii) $G_{\be\lle s}$ is a semiabelian $k(\lbe s\lbe)$-variety. For example, $(f,G\e)$ is admissible if $f$ is \'etale and $G$ is a semiabelian $S$-scheme. See below for more examples. In order to simplify the exposition in this Introduction, we state below a corollary of our main theorem (Theorem \ref{main}) in the following case: $r=0$, $f$ is a Galois covering with Galois group $\Delta$ (in particular, $f$ is \'etale) and $G=\bg_{m,\e S}$ is the multiplicative group scheme over $S$. In this case $\krn\e{\rm Res}_{\le G}^{(1)}\simeq\krn\e\pic f\simeq H^{\le 1}\be(\Delta,U\be(\lbe\spp\e)\e)$, where $\pic\be f\colon\pic S\to \pic \spp$ is the canonical map induced by $f$ and $U\be(\lbe\spp\e)=\varGamma(\spp\be,\lbe \mathcal O_{\lbe S^{\prime}}\be)^{*}$ is the group of global units on $\spp\,$\footnote{\e The isomorphism $\krn\e\pic f\simeq H^{\le 1}\be(\Delta,U\be(\lbe\spp\e)\e)$ follows from the Hochschild-Serre spectral sequence associated to the Galois covering $\spp\be/\lbe S\e$.}. Consider the following abelian fppf sheaf on $S$:
\begin{equation}\label{gen0}
\bg_{m}\lbe(n)=R_{\e S^{\lle\prime}\be/ S}^{\e(1)}(\e\mu_{\e n})/\mu_{\e n,\e S},
\end{equation}
where $R_{\e S^{\lle\prime}\be/ S}^{\e(1)}(\e\mu_{\e n})=\krn[\e N_{\spp\be/S}\colon\be R_{\e S^{\lle\prime}\be/ S}(\e\mu_{\e n,\e S^{\lle\prime}}\be)\to \mu_{\e n,\e S}]$ is the $S$-group scheme of $n$-th roots of unity on $\spp$ of $(\spp\!/\lbe S\e)$-norm $1$. The sheaf \eqref{gen0} is an $(\le \spp\be/S\e)$-form of $\mu_{\e n,\e S}^{\lle n-2}$, i.e., $\bg_{m}(n)_{S^{\lle\prime}}\simeq \mu_{\lle n,\e S^{\lle\prime}}^{\lle n-2}$. Now set 
\begin{equation}\label{fnu}
\Psi_{\be N}(n,U\e)=\{x\in U\be(\lbe\spp\le)\colon N_{S^{\lle\prime}\!\lbe/S}(x)\in U\be(\lbe S\e)^{n}\}.
\end{equation}
Then the following holds

\begin{theorem} \label{uno} There exists a canonical exact sequence of $n$-torsion abelian groups
\[
\begin{array}{rcl}
1&\to& U\be(\lbe S\e)\be\cap\be U\be(\lbe\spp\le)^{n}\be/\le U\be(\lbe S\e)^{n}\to H^{\le 1}\lbe(\Delta, \mu_{\lle n}\lbe(\lbe\spp\lle)\lbe)\to \krn\e \pic\be f\\
&\to& \Psi_{\be N}(n,U\e)/\le U\be(\lbe S\e)\e U\be(\lbe\spp\le)^{n}\to H^{\le 1}\lbe (\varGamma^{\e\bullet}_{\!n}\lbe(U\e)),
\end{array}
\]
where $\Psi_{\be N}(n,U\e)$ is the group \eqref{fnu} and
\[
H^{\le 1}\lbe (\varGamma^{\e\bullet}_{\!n}\lbe(U\e))=\frac{\krn[\e H^{\le 1}\be(S_{\fl},\bg_{m}\be(n))\to H^{\le 2}\lbe(S_{\fl}, \mu_{\le n}\lbe)\e]}{\img[\e H^{\le 0}\lbe(S_{\fl}, \mu_{\le n}\lbe)\to H^{\le 1}\lbe(S_{\fl},\bg_{m}\be(n))\e]},
\]
where $\bg_{m}\lbe(n)$ is the sheaf \eqref{gen0}.
\end{theorem}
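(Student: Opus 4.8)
The plan is to obtain the statement above as the specialization of the main theorem (Theorem~\ref{main}) to the case $r=0$, $G=\bg_{m,\e S}$ and $f$ a Galois covering with group $\Delta$, followed by a translation of every term into the explicit groups occurring in the statement together with one application of Kummer theory on $\spp$. First I would set up the dictionary. Since $\bg_{m,\e S}$ is an fppf sheaf one has $H^{\le 0}(S_\et,\bg_{m,\e S})=U(S)$ and $H^{\le 0}(\spp_\et,\bg_{m,\e S})=U(\spp)$, and the corestriction ${\rm Cores}^{(0)}_{\le G}$ is the norm $N_{\spp\be/S}$ on global units; hence $\krn({\rm Cores}^{(0)}_{\le G}/n)=\Psi_{\be N}(n,U)/U(\spp)^{n}$, and since $N_{\spp\be/S}(f^{\ast}y)=y^{\le n}$ for $y\in U(S)$, the image of $H^{\le 0}(S_\et,\bg_{m,\e S})/n$ in this kernel equals $U(S)\e U(\spp)^{n}/U(\spp)^{n}$, so the corresponding quotient of $\krn({\rm Cores}^{(0)}_{\le G}/n)$ is $\Psi_{\be N}(n,U)/U(S)\e U(\spp)^{n}$. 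Next, $\krn[\e n\colon\bg_{m,\e S}\to\bg_{m,\e S}\e]=\mu_{\e n,\e S}$, so the instances of the sheaf and of the corestriction complex that Theorem~\ref{main} attaches to $(f,\bg_{m,\e S})$ are precisely $\bg_{m}(n)$ of \eqref{gen0} and the complex $\varGamma^{\e\bullet}_{\!n}(U)$ displayed in the statement. Finally, $H^{\le 1}(S_\et,\bg_{m,\e S})=\pic S$ and likewise over $\spp$, so $\krn\e{\rm Res}^{(1)}_{\le G}=\krn\e\pic\be f$, and since $f$ is a Galois covering the Hochschild--Serre spectral sequence for $\spp\be/S$ gives $\krn\e\pic\be f\simeq H^{\le 1}(\Delta,U(\spp))$.

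Feeding this dictionary into the exact sequence of Theorem~\ref{main} produces the terms
\[
H^{\le 1}(\Delta,\mu_{\le n}(\spp))\to\krn\e\pic\be f\to\Psi_{\be N}(n,U)/U(S)\e U(\spp)^{n}\to H^{\le 1}(\varGamma^{\e\bullet}_{\!n}(U)),
\]
the first arrow being induced by the inclusion $\mu_{\le n,\e S}\hookrightarrow\bg_{m,\e S}$. It then remains to compute the kernel of this first arrow, and here I would use Kummer theory on $\spp$: the $n$-th power map and the inclusion $\mu_{\le n}(\spp)\hookrightarrow U(\spp)$ give a short exact sequence of $\Delta$-modules $1\to\mu_{\le n}(\spp)\to U(\spp)\to U(\spp)^{n}\to 1$, and the first terms of its long exact $\Delta$-cohomology sequence read
\[
1\to\mu_{\le n}(S)\to U(S)\xrightarrow{\,x\mapsto x^{n}\,}U(S)\cap U(\spp)^{n}\to H^{\le 1}(\Delta,\mu_{\le n}(\spp))\to H^{\le 1}(\Delta,U(\spp)),
\]
where I use $\mathcal F(\spp)^{\Delta}=\mathcal F(S)$ for $\mathcal F=\mu_{\le n,\e S}$ and $\mathcal F=\bg_{m,\e S}$, and $(U(\spp)^{n})^{\Delta}=U(\spp)^{n}\cap U(S)$. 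As the $n$-th power map carries $U(S)$ onto $U(S)^{n}$, this identifies $\krn[\e H^{\le 1}(\Delta,\mu_{\le n}(\spp))\to H^{\le 1}(\Delta,U(\spp))\e]$ canonically with $U(S)\cap U(\spp)^{n}/U(S)^{n}$; transporting this along $H^{\le 1}(\Delta,U(\spp))\simeq\krn\e\pic\be f$ supplies the missing terms $1\to U(S)\cap U(\spp)^{n}/U(S)^{n}\to H^{\le 1}(\Delta,\mu_{\le n}(\spp))\to\krn\e\pic\be f$, and splicing yields the sequence of the statement. That every group in it is $n$-torsion is then clear: $U(S)\cap U(\spp)^{n}/U(S)^{n}$ and $\Psi_{\be N}(n,U)/U(S)\e U(\spp)^{n}$ manifestly, $H^{\le 1}(\Delta,\mu_{\le n}(\spp))$ and $\krn\e\pic\be f\simeq H^{\le 1}(\Delta,U(\spp))$ because $\lvert\Delta\rvert=n$ annihilates $H^{\le 1}(\Delta,-)$, and $H^{\le 1}(\varGamma^{\e\bullet}_{\!n}(U))$ because it is a subquotient of $H^{\le 1}(S_\fl,\bg_{m}(n))$ and $\bg_{m}(n)$, being an $(\spp\be/S)$-form of $\mu^{\le n-2}_{\le n,\e S}$, is killed by $n$.

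The step I expect to be the main obstacle is the splice itself: checking that the first arrow $H^{\le 1}(\Delta,\mu_{\le n}(\spp))\to\krn\e\pic\be f$ delivered by Theorem~\ref{main} coincides, under the Hochschild--Serre identification $\krn\e\pic\be f\simeq H^{\le 1}(\Delta,U(\spp))$, with the map induced in $\Delta$-cohomology by $\mu_{\le n}(\spp)\hookrightarrow U(\spp)$ (the content of the short exact sequence used above). This is a naturality statement that must be traced through the construction of the connecting homomorphisms in Theorem~\ref{main} --- equivalently, through the distinguished triangle defining $\varGamma^{\e\bullet}_{\!n}$ and its comparison with the Kummer triangle on $\spp$. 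Once that square is known to commute, the two exact sequences share the term $H^{\le 1}(\Delta,\mu_{\le n}(\spp))$ with matching incoming and outgoing maps, so exactness of the spliced sequence is automatic and no further argument is needed.
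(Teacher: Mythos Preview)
Your overall strategy---specialize Theorem~\ref{main} with $r=1$ (what the Introduction calls ``$r=0$''), $G=\bg_{m,\e S}$, $f$ Galois---is exactly what the paper does. However, you are making the argument harder than it needs to be, and the ``obstacle'' you flag is entirely self-inflicted.

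The point you overlook is that the \emph{first} term of the exact sequence in Theorem~\ref{main} already gives you $\krn({\rm Res}_{\e G}^{(0)}/n)$. For $G=\bg_{m,\e S}$ the restriction ${\rm Res}_{\e G}^{(0)}$ is the inclusion $U(S)\hookrightarrow U(\spp)$, so ${\rm Res}_{\e G}^{(0)}/n$ is the map $U(S)/U(S)^{n}\to U(\spp)/U(\spp)^{n}$ and its kernel is precisely $U(S)\cap U(\spp)^{n}/U(S)^{n}$. Thus the full five-term sequence of Theorem~\ref{uno} is read off directly from the first five terms of Theorem~\ref{main} (this is the content of Corollary~\ref{fir}), and no separate Kummer computation or splicing is required. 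The second term $\krn\e j_{\mu_{n},\fl}^{(1)}$ is identified with $\check{H}^{1}(\spp\!/S,\mu_{n})$ via \cite[Proposition~2.1]{ga18} and then with $H^{1}(\Delta,\mu_{n}(\spp))$ via the Galois-covering isomorphism~\eqref{gpc}; the remaining three terms are identified exactly as in your dictionary.

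Your alternative route---extracting only terms~2--5 from Theorem~\ref{main}, then recovering the first term via the $\Delta$-cohomology of $1\to\mu_{n}(\spp)\to U(\spp)\to U(\spp)^{n}\to 1$ and splicing---is not wrong, and the Kummer computation itself is correct. But it forces you to verify the compatibility square you rightly call the main obstacle (that the arrow $\krn\e j_{\mu_{n},\fl}^{(1)}\to\krn{\rm Res}_{\bg_{m}}^{(1)}$ from Theorem~\ref{main} matches, under the Čech/Hochschild--Serre identifications, the map induced by $\mu_{n}(\spp)\hookrightarrow U(\spp)$). The paper avoids this entirely by not discarding the first term in the first place.
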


If $\Delta$ is cyclic, then the above theorem is neither surprising nor optimal since the group
\[
\krn\e \pic\be f\simeq H^{\le 1}\be(\Delta,U\be(\lbe\spp\le)\e)\simeq H^{\le -1}\be(\Delta,U\lbe(\lbe\spp\le)\e) \quad\text{(Tate cohomology)}
\]
is a quotient of $\{x\in U\lbe(\lbe\spp\le)\colon N_{S^{\lle\prime}\!\lbe/S}(x)=1 \}$, whence $\krn\e \pic\be f$ can be directly related to a subgroup of $U\lbe(\lbe\spp\le)$ defined in terms of the norm map $N_{S^{\lle\prime}\!/S}$. Note, however, that the theorem shows (in particular) that $\krn\e \pic\be f$ can be related to such a subgroup of $U\lbe(\lbe\spp\le)$ for {\it any} $\Delta$.

Another class of admissible pairs is the following. Let $F$ be a number field, write  $\mathcal O_{\lbe F}$ for the ring of integers of $F$ and let $K\lbe/\lbe F$ be a finite Galois extension of degree $n$ which is unramified over the set $\Omega$ of primes of $F$ that divide $n$. Let $G_{\be F}$ be either an abelian variety or an algebraic torus over $F$ with semiabelian reduction over $\Omega\,$\e\footnote{\e  Thus, if $G_{\be F}$ is an $F$-torus, then $G_{\be F}$ has multiplicative (i.e., toric) reduction over $\Omega$.}. If $f\colon\spec \mathcal O_{\lbe K}\to \spec\mathcal O_{\lbe F}$ is the morphism induced by the inclusion $\mathcal O_{\lbe F}\subset \mathcal O_{\lbe K}$ and $G$ denotes the identity component of the N\'eron-Raynaud model of $G_{\be F}$ over $\mathcal O_{\lbe F}$, then $(\le f,G\e)$ is an admissible pair. In this setting our main Theorem \ref{main} yields information on the {\it Capitulation Problem} for the Tate-Shafarevich group of $G_{\be F}$ over $F$ and the N\'eron-Raynaud class group of $G_{\be F}$ introduced in \cite[\S3]{ga12}. See Section \ref{new} for the details.

\smallskip

The paper is organized as follows. Section \ref{pre} consists of preliminaries, in particular on the Weil restriction functor. In section \ref{nm1} we discuss the norm one group scheme, which plays a central role in the paper. In section \ref{mid} we establish our main theorem (Theorem \ref{main}). The developments of this section were inspired by \cite[\S3]{ps} but, in contrast to [loc.cit.], we avoid working with hypercohomological spectral sequences and work instead primarily with complexes of length 3. In section \ref{quad} we specialize our results to the (interesting) case of quadratic Galois coverings. Section \ref{new}, which concludes the paper, discusses the applications of our main theorem to the Capitulation Problem mentioned above for quadratic Galois extensions of global fields.

\section*{Acknowledgements}
I thank Laurent Moret-Bailly for answering my question \cite{mb}. I also thank the referee for pointing out the need to include additional applications of the main theorem, which  motivated the inclusion of Section \ref{new} in this version. This research was partially supported by Fondecyt grant 1160004.

\section{Preliminaries}\label{pre}

If  $n\geq 1$ is an integer and $A$ is an object of an abelian category $\s A$, $A_{\e n}$ (respectively, $A/ n\le$) will denote the kernel (respectively, cokernel) of the multiplication by $n$ morphism on $A$. If $\psi\colon A\to B$ is a morphism in $\s A$, $\psi_{\le n}\colon A_{\e n}\to B_{\le n}$ and $\psi/n\colon A/n\to B/n$ will denote the morphisms in $\s A$ induced by $\psi$. 
Note that 
\begin{equation} \label{co1}
\krn (\psi_{\le n})=(\krn\psi)_{n}\qquad\text{and}\qquad \cok (\psi/n)=(\cok\psi)/n.
\end{equation}

\begin{proposition}\label{ker-cok}  Let  $A\overset{\!f}{\to}B\overset{\!g}{\to}C$ be  morphisms in an abelian category $\s A$. Then there exists a canonical exact sequence in $\s A$
\[
0\to \krn f\to \krn\lbe(\e g\be\circ\be f\e)\to \krn g\to\cok f\to \cok\lbe(\e g\be\circ\be f\e) \to \cok g\to 0.
\]
\end{proposition}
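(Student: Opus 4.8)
The plan is to construct the six-term sequence directly from the standard short exact sequences that express kernels and cokernels of $f$, $g$, and $g\circ f$ as subobjects and quotients, then glue them together via the snake lemma (or equivalently via a diagram chase, which is legitimate by Freyd--Mitchell). First I would set $h=g\circ f$ and record the three defining short exact sequences: $0\to\krn f\to A\to\img f\to 0$, $0\to\img f\to B\to\cok f\to 0$, and similarly for $g$ and $h$. The key observation is that $g$ restricts to a morphism $\img f\to B$ whose composite with $B\to C$ factors through $\img h$, and one obtains a commutative diagram with exact rows
\[
\begin{array}{ccccccccc}
0&\to&\img f&\to&B&\to&\cok f&\to&0\\
&&\big\downarrow&&\big\downarrow\,\id&&\big\downarrow&&\\
0&\to&\img h&\to&B&\to&\cok h&\to&0,
\end{array}
\]
where the left vertical map is the restriction of $g$ and the right vertical map is the induced map on cokernels. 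Applying the snake lemma to this diagram yields an exact sequence
\[
0\to\krn(\img f\to\img h)\to\krn(\cok f\to\cok h)\to\cok(\img f\to\img h)\to\cok(\cok f\to\cok h)\to 0,
\]
using that the middle vertical map $\id_B$ has zero kernel and cokernel.

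Next I would identify each of these four terms. The map $\img f\to\img h$ is surjective (since $h=g\circ f$ means $\img h$ is the image of $g$ restricted to $\img f$), so its cokernel vanishes and the sequence above already gives $\krn(\cok f\to\cok h)\isoto\cok(\cok f\to\cok h)=\cok(g/\!\!\sim)$; more importantly its kernel $\krn(\img f\to\img h)$ must be computed. A section $a$ of $\img f$ maps to $0$ in $\img h$ exactly when it lies in $\krn g$; thus $\krn(\img f\to\img h)=\img f\cap\krn g$. Similarly, from the short exact sequence $0\to\krn f\to A\to\img f\to 0$ and the analogous one $0\to\krn h\to A\to\img h\to 0$ with the identity on $A$, the snake lemma gives $0\to\krn f\to\krn h\to\krn(\img f\to\img h)\to 0$, i.e. $0\to\krn f\to\krn h\to\img f\cap\krn g\to 0$. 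Finally, from $0\to\img f\cap\krn g\to\krn g\to\krn g/(\img f\cap\krn g)\to 0$ and the inclusion $\krn g/(\img f\cap\krn g)\hookrightarrow\cok f=B/\img f$, one recognizes $\cok f\to\cok h$ as having kernel exactly $\krn g/(\img f\cap\krn g)$ and cokernel $\cok h$, consistent with the snake output.

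Assembling: splice the short exact sequence $0\to\krn f\to\krn h\to\img f\cap\krn g\to 0$ with $0\to\img f\cap\krn g\to\krn g\to\krn g/(\img f\cap\krn g)\to 0$ and then with $0\to\krn g/(\img f\cap\krn g)\to\cok f\to\cok h\to\cok g\to 0$ (the last four-term sequence itself coming from the snake lemma applied to $0\to\img f\to B\to\cok f\to 0$ mapping to $0\to\img h\to B\to\cok h\to 0$, together with $\cok(\img f\to\img h)=0$ and $\cok h\to\cok g$ surjective with kernel matching). Concatenating these gives precisely
\[
0\to\krn f\to\krn h\to\krn g\to\cok f\to\cok h\to\cok g\to 0,
\]
and all maps are canonical because each arises either from a universal property (inclusion of a kernel, projection onto a cokernel, the induced map on kernels/cokernels of a morphism of complexes) or from the snake connecting homomorphism, which is canonical. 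The only mild subtlety—the main point requiring care rather than a genuine obstacle—is checking exactness at the splice points $\krn g$ and $\cok f$, i.e. that $\img(\krn h\to\krn g)=\img f\cap\krn g=\krn(\krn g\to\cok f)$ and symmetrically at $\cok f$; this follows from the three short exact sequences above, since $\img f\cap\krn g$ appears as both the cokernel of $\krn f\to\krn h$ and the kernel of $\krn g\to\cok f$. Everything else is formal.
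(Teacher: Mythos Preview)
Your overall strategy---build the six-term sequence by applying the snake lemma to short exact sequences involving $\img f$, $\img h$, $\cok f$, $\cok h$ and then splice---is the standard one and would succeed, but the central diagram you wrote is ill-formed and needs to be fixed before the argument goes through. The object $\img h=\img(g\circ f)$ is a subobject of $C$, not of $B$, and $\cok h$ is a quotient of $C$; so there is no exact row $0\to\img h\to B\to\cok h\to 0$, and in particular the middle vertical arrow cannot be $\id_B$. The correct diagram is
\[
\xymatrix{
0\ar[r]&\img f\ar[r]\ar[d]_{g|_{\img f}}&B\ar[r]\ar[d]^{g}&\cok f\ar[r]\ar[d]&0\\
0\ar[r]&\img h\ar[r]&C\ar[r]&\cok h\ar[r]&0,
}
\]
and the snake lemma applied to it yields the six-term sequence
\[
0\to\img f\cap\krn g\to\krn g\to\krn(\cok f\to\cok h)\to 0\to\cok g\to\cok(\cok f\to\cok h)\to 0
\]
(using that $g|_{\img f}\colon\img f\to\img h$ is onto). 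This is \emph{not} the four-term sequence you wrote; your version drops exactly the terms $\krn g$ and $\cok g$ that carry the content. Your later identifications (e.g.\ $\krn(\cok f\to\cok h)=\krn g/(\img f\cap\krn g)$) are in fact consequences of this corrected snake output, not independent computations, and your claim that $\cok f\to\cok h$ has ``cokernel $\cok h$'' should read ``cokernel $\cok g$''. Once you replace the faulty diagram by the one above, the splicing with $0\to\krn f\to\krn h\to\img f\cap\krn g\to 0$ (which you obtained correctly from the analogous diagram with $\id_A$ in the middle) gives the desired sequence immediately, and the argument is cleaner than the version you wrote.

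For comparison, the paper does not prove this at all: it simply cites Beyl's article and records that the connecting map $\krn g\to\cok f$ is the composite $\krn g\hookrightarrow B\twoheadrightarrow\cok f$. Your write-up, once corrected, would supply a self-contained proof the paper lacks.
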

\begin{proof} See, for example, \cite[1.2]{bey}. The middle map $\krn g\to\cok f$ is the composition $\krn g\hookrightarrow B\twoheadrightarrow \cok f$. The remaining maps are the natural ones.
\end{proof}

{\it All schemes below are tacitly assumed to be non-empty.}

\smallskip

If $S$ is a scheme and $\tau\, (=\et$ or $\fl$) denotes either the \'etale or the fppf topology on $S$, $S_{\tau}$ will denote the small $\tau$ site over $S$. A faithfully flat morphism locally of finite presentation $\spp\to S$ is an fppf covering of $S$. We will write $\sts$ for the category of sheaves of abelian groups on $S_{\tau}$, which is abelian. If $G$ is a commutative $S$-group scheme, the presheaf represented by $G$ is an object of $S_{\tau}^{\e\sim}$. In particular, if $f\colon\spp\to S$ is an fppf covering of $S$ as above, then the map $G(S\e)\hookrightarrow G(\spp)$ induced by $f$ is an injection that will be regarded as an inclusion. If $n\geq 1$ is an integer, the object $G_{n}$ of the abelian category $\sts$ is represented by the $S$-group scheme $G\!\times_{n_{\le G},\e G,\e\varepsilon}\! S$, where $n_{\le G}$ is the $n$-th power morphism on $G$ and $\varepsilon\colon S\to G$ is the unit section of $G$. We will make the identifications 
\[
n_{\le G_{\be S^{\lle\prime}}}=n_{\le G}\times_{S}\spp \qquad\text{and}\qquad G_{\lbe n,\e S^{\lle\prime}}=(G_{\lbe n})_{S^{\lle\prime}}=(G_{\lbe S^{\lle\prime}}\lbe)_{n}.
\]
If $G$ is separated over $S$, then $G_{n}\hookrightarrow G$ is a closed immersion. If, in addition, $G$ is quasi-projective over $S$, then $G_{n}$ is also quasi-projective over $S$ \cite[II, Proposition 5.3.4(i)]{ega}. If $\psi\colon G\to H$ is a morphism of commutative $S$-group schemes, $\psi_{n}\colon G_{n}\to H_{n}$ will denote the morphism of $S$-group schemes induced by $\psi$ \cite[(1.2.3), pp.~26-27]{ega1}.

If $G$ is as above, we will write $H^{\le r}\be(\spp_{\tau},G\e)$ for $H^{\le r}\be(\spp_{\tau},G_{\lbe S^{\lle\prime}}\be)$, where $G_{\lbe S^{\lle\prime}}=G\times_{S}\spp$. If $G$ is smooth over $S$, $H^{\le r}\lbe(S_{\fl},G\e)$ and $H^{\le r}\lbe(S_{\et},G\e)$ will be identified via \cite[Theorem 11.7(1), p.~180]{dix}. 
If $G=\bg_{m,\e S}$, the groups $H^{\le r}\lbe(S_{\tau},G\e)$ will be denoted by $H^{\le r}\lbe(S_{\tau},\bg_{m}\lbe)$. Further, we will identify $H^{\le 1}(S_{\et},\bg_{m})$ and $\pic S$ via  \cite[Theorem 4.9, p.~124]{mi1} and we will write $\br^{\e\prime}S$ for the cohomological Brauer group of $S$, i.e., $\br^{\e\prime}S=H^{\le 2}(S_{\et},\bg_{m})$. Let $\br\e S$ denote the Brauer group of equivalence classes of Azumaya algebras on $S$. If $S$ is quasi-compact and admits an ample invertible sheaf, then the canonical map 
$\br\e S\to \br^{\e\prime}\e S$ induces an isomorphism of torsion abelian groups
\begin{equation}\label{br}
\br\e S\simeq (\br^{\e\prime}S\e)_{\rm tors}
\end{equation}
by \cite{j} and \cite[Tag 01PR, Lemma 27.26.8]{sp}.

A sequence of commutative $S$-group schemes
\begin{equation}\label{sex}
0\to G_{1}\to G_{2}\to G_{3}\to 0
\end{equation}
will be called {\it exact} if the corresponding sequence of representable objects in $\sfs$ is exact. If $G_{2}\to G_{3}$ is faithfully flat and locally of finite presentation and $G_{\lbe 1}=G_{2}\times_{G_{3}}S$ denotes the scheme-theoretic kernel of $G_{2}\to G_{3}$, then \eqref{sex} is exact.

\smallskip

Let $f\colon S^{\e\prime}\to S$ be a finite and faithfully flat morphism of locally noetherian schemes of constant rank $n\geq 2$ and let $X^{\prime}$ be an $\spp$-scheme. The {\it Weil restriction of $X^{\prime}$ along $f$} is the contravariant functor
$(\mathrm{Sch}/S)\to(\mathrm{Sets})$ defined by $T\mapsto\hom_{
S^{\le\prime}}(T\times_{S}S^{\le\prime},X^{\le\prime}\le)$. This functor is representable if there exist an $S$-scheme $R_{S^{\le\prime}\be/S}(X^{\prime}\e)$ and a morphism of $S^{\e\prime}$-schemes
\begin{equation}\label{the}
\theta_{X^{\prime},\e S^{\le\prime}\be/S}\colon R_{S^{\le\prime}\be/S}(X^{\prime}\e)_{S^{\le\prime}}\to X^{\prime}
\end{equation}
such that the map
\begin{equation}\label{wr}
\hom_{\le S}\e(T,R_{S^{\le\prime}\be/S}(X^{\le\prime}\e))\to\hom_{
S^{\le\prime}}(\e T\!\times_{S}\!S^{\e\prime},X^{\le\prime}\e), \quad g\mapsto \theta_{ X^{\prime}\!,\, S^{\le\prime}\be/S}\circ g_{\le S^{\le\prime}},
\end{equation}
is a bijection (functorially in $T\e$). See \cite[\S7.6]{blr} and \cite[Appendix A.5]{cgp} for basic information on the Weil restriction functor\,\footnote{\e As noted by Brian Conrad, the restriction in \cite[Appendix A.5]{cgp} to an affine base $S$ can be removed since all assertions in [loc.cit.] are local on $S$. Further, the noetherian hypotheses in [loc.cit.] are satisfied since we are assuming that $S$ and $\spp$ are locally noetherian. Finally, we note that the quasi-projectivity hypotheses in [loc.cit.] are only needed to guarantee the existence of the relevant Weil restrictions in the category of schemes.}\,. The map $\theta_{X^{\prime}\be,\e S^{\le\prime}\be/S}$ \eqref{the} is both functorial in $X^{\prime}$ and compatible with compositions and arbitrary base changes \cite[(2.40) and (2.41), p.~16]{gfr}. We will write
\begin{equation}\label{imor}
j_{ X,\e S^{\lle\prime}\be/ S}\colon X\to R_{\e S^{\lle\prime}\be/ S}(X_{\be S^{\lle\prime}}\be)
\end{equation}
for the canonical adjunction $S$-morphism, i.e., the $S$-morphism that corresponds to the identity morphism of $X_{S^{\lle\prime}}$ under the bijection \eqref{wr}. 

\smallskip

For lack of an adequate reference, we include here a proof of

\begin{proposition}\label{wtf} Let $n\geq 1$ be an integer and let $S_{i}^{\e\prime}\to S$ (where $1\leq i\leq n$) be a finite collection of $S$-schemes. Set $S^{\e\prime}=\coprod_{\e i=1}^{\e n}S_{ i}^{\e\prime}$ and let $\spp\to S$ be the canonical morphism of schemes induced by the given morphisms $S_{i}^{\e\prime}\to S$. Then, for every $S^{\e\prime}$-scheme $X^{\lle\prime}$, there exists an isomorphism of functors
\[
R_{\e S^{\e\prime}\!/ S}(X^{\le\prime}\le)\isoto\prod_{\e i=1}^{\e n}R_{\e S_{i}^{\lle\prime}\lbe/S}(X^{\le\prime}\!\times_{S^{\lle\prime}}\be S_{i}^{\e\prime}\e).
\]
\end{proposition}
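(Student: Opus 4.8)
The plan is to verify the isomorphism directly from the defining universal property \eqref{wr} of the Weil restriction, reducing everything to the elementary fact that a morphism from a scheme $T'$ whose source is a disjoint union $\coprod_i T_i'$ is the same datum as a tuple of morphisms out of the pieces $T_i'$. First I would fix an arbitrary $S$-scheme $T$ and compute the functor of points of the right-hand side: by definition of the product functor and of Weil restriction along each $S_i'\to S$, one has
\[
\Bigl(\prod_{i=1}^{n}R_{\le S_i'/S}(X'\times_{S'}S_i')\Bigr)(T)=\prod_{i=1}^{n}\hom_{S_i'}\!\bigl(T\times_S S_i',\,X'\times_{S'}S_i'\bigr).
\]
On the other hand $R_{\le S'/S}(X')(T)=\hom_{S'}(T\times_S S',X')$ by the defining property \eqref{wr}. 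Since $S'=\coprod_{i=1}^{n}S_i'$, base change along $T\to S$ commutes with the coproduct, so $T\times_S S'=\coprod_{i=1}^{n}(T\times_S S_i')$; moreover $X'=\coprod_{i=1}^{n}(X'\times_{S'}S_i')$ as $S'$-schemes, with the $i$-th summand living over $S_i'$. The key point is then that an $S'$-morphism $\coprod_i(T\times_S S_i')\to X'$ must send the summand $T\times_S S_i'$, which lies over $S_i'$, into the summand $X'\times_{S'}S_i'$ of $X'$ lying over $S_i'$ (the other summands lie over different open-and-closed pieces of $S'$, so there is no room); hence such a morphism is precisely a tuple of $S_i'$-morphisms $T\times_S S_i'\to X'\times_{S'}S_i'$. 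This yields a bijection
\[
R_{\le S'/S}(X')(T)\isoto\prod_{i=1}^{n}R_{\le S_i'/S}(X'\times_{S'}S_i')(T),
\]
and one checks it is natural in $T$, giving the desired isomorphism of functors.

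The only real subtlety — and the step I would be most careful about — is the claim that an $S'$-morphism out of the disjoint union respects the decomposition into summands, i.e.\ that $\hom_{S'}\!\bigl(\coprod_i Z_i,\,\coprod_j Y_j\bigr)\cong\prod_i\hom_{S'}(Z_i,Y_i)$ when $Z_i$ lies over $S_i'$ and $Y_j$ lies over $S_j'$. This follows because the $S_i'$ are the open-and-closed components of $S'=\coprod_i S_i'$, so any $S'$-scheme $Z$ decomposes canonically and functorially as $\coprod_i Z\times_{S'}S_i'$, and a morphism over $S'$ must be compatible with the structure morphisms to $S'$, forcing the component lying over $S_i'$ into the target component lying over $S_i'$. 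One should note that the proposition is a purely functorial statement and requires no representability hypothesis: it is an isomorphism of the Weil-restriction \emph{functors}. (Of course, under the finiteness and flatness assumptions in the surrounding text all these functors are in fact representable, and the isomorphism is then one of $S$-schemes, compatible in the evident way with the adjunction morphisms \eqref{imor} and the counit morphisms \eqref{the}; this compatibility, if desired, is immediate from Yoneda once the functorial isomorphism is established.)

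Finally, I would record that the resulting isomorphism is compatible with the canonical morphisms $\theta$ of \eqref{the}: tracing through the construction, $\theta_{X',\,S'/S}$ corresponds summand-by-summand to the $\theta_{X'\times_{S'}S_i',\,S_i'/S}$, which is what makes this decomposition genuinely useful in the sequel (for instance when applying it to $X'=\mu_{n,S'}$ to analyze $R_{S'/S}(\mu_n)$ and the norm-one subgroup). This last verification is again a routine Yoneda argument and I would not belabor it.
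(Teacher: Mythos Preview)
Your proposal is correct and follows essentially the same route as the paper: both arguments evaluate on an arbitrary $S$-scheme, use that $T\times_S S'=\coprod_i(T\times_S S_i')$, and reduce to the fact that an $S'$-morphism out of a scheme lying over $S_i'$ factors through $X'\times_{S'}S_i'$. The only cosmetic difference is that the paper passes directly from $\hom_{S'}(T\times_S S_i',X')$ to $\hom_{S_i'}(T\times_S S_i',X'\times_{S'}S_i')$ via the universal property of the fiber product, whereas you first decompose $X'$ over $S'$ and then argue that components over distinct $S_i'$ cannot mix; these are equivalent formulations of the same observation, and your added remarks on compatibility with the $\theta$-maps go beyond what the paper records but are correct.
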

\begin{proof} Let $Y$ be any $S$-scheme. Since the tensor product is distributive with respect to finite direct products, \eqref{wr} and \cite[\S3.1, pp.~230-231]{ega1} yield canonical bijections
\[
\begin{array}{rcl}
\hom_{S}(\e Y,R_{\e S^{\e\prime}\!/\e S}(X^{\e\prime}\le))&\isoto& \hom_{S^{\le\prime}}(\e Y\!\times_{S}S^{\e\prime},X^{\lle\prime}\le)\\
&\simeq&\hom_{S^{\le\prime}}(\e\coprod_{\e i=1}^{\e n}(\e Y\!\times_{S}S_{i}^{\e\prime}\e),X^{\e\prime}\le)\\
&\simeq&\prod_{\e i=1}^{\e n}\hom_{S^{\le\prime}}(\e Y\!\times_{S}S_{i}^{\e\prime},X^{\e\prime}\le)\\
&\simeq&\prod_{\e i=1}^{\e n}\hom_{S_{i}^{\le\prime}}(\e Y\!\times_{S}S_{i}^{\e\prime},X^{\e\prime}\times_{S}S_{i}^{\e\prime}\le)\\
&\simeq&\prod_{\e i=1}^{\e n}\hom_{S}(\e Y,R_{\e S_{i}^{\e\prime}\lbe/\le S}(X^{\e\prime}\!\times_{S^{\e\prime}}\be S_{i}^{\e\prime}\e))\\
&\simeq&\hom_{S}(\e Y,\prod_{\e i=1}^{\e n}\!R_{\e S_{i}^{\e\prime}\lbe/\le S}(X^{\e\prime}\!\times_{S^{\e\prime}}\be S_{i}^{\e\prime}\e)).
\end{array}
\]
The proposition follows.
\end{proof}

\smallskip

If $n\geq 1$ is an integer and $X$ is an $S$-scheme, $X^{n}$ will denote the $S$-scheme defined recursively by $X^{1}=X$ and $X^{n}=X\times_{\be S} X^{n-1}$ for $n\geq 2$.

\begin{corollary}\label{mb} Let $X$ be an $S$-scheme such that the $S$-scheme $R_{\e S^{\e\prime}\!/\lbe S}(X_{\lbe S^{\lle\prime}}\be)$ exists and let $S^{\e\prime\prime}\to S$ be a morphism of schemes such that $S^{\e\prime}\times_{S}S^{\e\prime\prime}\simeq \coprod_{\e i=1}^{\e n}\! S^{\e\prime\prime}$, where $n\geq 1$. Then there exists a canonical isomorphism of $S^{\e\prime\prime}$-schemes
\[
R_{\e S^{\lle\prime}\!/ S}(X_{S^{\lle\prime}}\lbe)\times_{S}S^{\e\prime\prime}\simeq X^{ n}_{\lbe S^{\lle\prime\prime}}\e.
\]	
\end{corollary}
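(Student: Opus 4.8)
The plan is to deduce Corollary \ref{mb} from Proposition \ref{wtf} by base change. First I would set $T=S^{\e\prime\prime}$ as the new base and consider the morphism $\spp\times_{S}S^{\e\prime\prime}\to S^{\e\prime\prime}$ obtained from $f\colon\spp\to S$ by the base change $S^{\e\prime\prime}\to S$. By hypothesis, $\spp\times_{S}S^{\e\prime\prime}\simeq\coprod_{i=1}^{n}S^{\e\prime\prime}$, so this is exactly the situation of Proposition \ref{wtf} with the base $S$ replaced by $S^{\e\prime\prime}$, with each $S^{\e\prime}_{i}\to S$ replaced by the structural copy of $S^{\e\prime\prime}$, and with $X^{\prime}$ replaced by $X_{S^{\lle\prime}}\times_{S^{\lle\prime}}(S^{\lle\prime}\times_{S}S^{\e\prime\prime})=(X_{S^{\lle\prime}})_{S^{\lle\prime\prime}_{?}}$ — more precisely, by the pullback of $X_{S^{\lle\prime}}$ along $\spp\times_{S}S^{\e\prime\prime}\to\spp$.

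Next I would invoke the compatibility of the Weil restriction functor with arbitrary base change \cite[(2.41), p.~16]{gfr} (as recorded in the paragraph following \eqref{imor}), which gives a canonical isomorphism
\[
R_{\e S^{\lle\prime}\!/ S}(X_{S^{\lle\prime}}\lbe)\times_{S}S^{\e\prime\prime}\simeq R_{\e (S^{\lle\prime}\times_{S}S^{\e\prime\prime})/ S^{\e\prime\prime}}\bigl((X_{S^{\lle\prime}})\times_{\spp}(S^{\lle\prime}\times_{S}S^{\e\prime\prime})\bigr).
\]
Applying Proposition \ref{wtf} over the base $S^{\e\prime\prime}$ to the right-hand side, and observing that under each of the $n$ identifications $S^{\lle\prime}\times_{S}S^{\e\prime\prime}\simeq\coprod_{i=1}^{n}S^{\e\prime\prime}$ the $i$-th factor of the pullback of $X_{S^{\lle\prime}}$ becomes simply $X\times_{S}S^{\e\prime\prime}=X_{S^{\lle\prime\prime}}$, I get
\[
R_{\e (S^{\lle\prime}\times_{S}S^{\e\prime\prime})/ S^{\e\prime\prime}}\bigl(\cdots\bigr)\simeq\prod_{i=1}^{n}R_{\e S^{\e\prime\prime}\!/S^{\e\prime\prime}}(X_{S^{\lle\prime\prime}})\simeq\prod_{i=1}^{n}X_{S^{\lle\prime\prime}}=X^{n}_{S^{\lle\prime\prime}},
\]
where the middle isomorphism uses that Weil restriction along the identity morphism is the identity functor (immediate from \eqref{wr} with $\spp=S$). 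Composing these isomorphisms yields the claim.

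I do not expect any serious obstacle here; the corollary is essentially a formal consequence of Proposition \ref{wtf} together with base-change compatibility of Weil restriction. The only point requiring mild care is bookkeeping: one must check that the decomposition $\spp\times_{S}S^{\e\prime\prime}\simeq\coprod_{i=1}^{n}S^{\e\prime\prime}$ is as $S^{\e\prime\prime}$-schemes (so that each summand is the structural $S^{\e\prime\prime}$, hence $R_{S^{\e\prime\prime}/S^{\e\prime\prime}}$ of its base change of $X$ is just $X_{S^{\lle\prime\prime}}$), and that the canonical isomorphism of Proposition \ref{wtf} is functorial enough that the composite is itself canonical. Both are straightforward from the definitions and from the explicit chain of bijections in the proof of Proposition \ref{wtf}. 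If one prefers a self-contained argument avoiding \cite{gfr}, one can instead redo the Yoneda computation of Proposition \ref{wtf} directly over $S^{\e\prime\prime}$: for any $S^{\e\prime\prime}$-scheme $Y$, $\hom_{S^{\e\prime\prime}}(Y,R_{S^{\lle\prime}\!/S}(X_{S^{\lle\prime}})\times_{S}S^{\e\prime\prime})=\hom_{S}(Y,R_{S^{\lle\prime}\!/S}(X_{S^{\lle\prime}}))=\hom_{\spp}(Y\times_{S}\spp,X_{\spp})=\hom_{\spp}(Y\times_{S^{\e\prime\prime}}(\spp\times_{S}S^{\e\prime\prime}),X_{\spp})$, and then using $\spp\times_{S}S^{\e\prime\prime}\simeq\coprod_{i=1}^{n}S^{\e\prime\prime}$ and distributivity of $\times$ over $\coprod$ exactly as before, this equals $\prod_{i=1}^{n}\hom_{S^{\e\prime\prime}}(Y,X_{S^{\lle\prime\prime}})=\hom_{S^{\e\prime\prime}}(Y,X^{n}_{S^{\lle\prime\prime}})$.
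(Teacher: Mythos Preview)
Your proposal is correct and follows essentially the same approach as the paper: the paper's one-line proof invokes the base-change isomorphism $R_{\e S^{\lle\prime}\!/\lbe S}(X_{S^{\lle\prime}})\times_{S}S^{\e\prime\prime}\simeq R_{\e S^{\lle\prime}\times_{\lbe S}S^{\lle\prime\prime}\!/\lbe S^{\lle\prime\prime}}(X_{S^{\lle\prime}\times_{\lbe S}S^{\lle\prime\prime}})$ and then appeals to Proposition~\ref{wtf}, which is exactly your first argument (with the identification $R_{S^{\e\prime\prime}/S^{\e\prime\prime}}=\id$ left implicit). Your alternative self-contained Yoneda computation is also fine and simply unwinds the same chain of bijections.
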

\begin{proof} This is immediate from the proposition via the canonical isomorphism of $S^{\e\prime\prime}$-schemes $R_{\e S^{\lle\prime}\be/\e S}(X_{S^{\lle\prime}}\e)\be\times_{S}\be S^{\lle\prime\prime}\simeq R_{\e S^{\lle\prime}\lbe\times_{\lbe S}\lbe S^{\lle\prime\prime}\!/\e S^{\lle\prime\prime}}(X_{S^{\lle\prime}\lbe\times_{\lbe S}  S^{\lle\prime\prime}}\lbe)$.
\end{proof}

\smallskip

\section{The norm one group scheme}\label{nm1}

Let $f\colon \spp\to S$ be a finite and faithfully flat morphism of locally noetherian schemes of constant rank $n\geq 2$ and let $G$ be a commutative and quasi-projective $S$-group scheme\,\footnote{\, Note that, if $S$ is the spectrum of a field, then every $S$-group scheme of finite type is quasi-projective over $S$ by \cite[Proposition A.3.5, p.~486]{cgp}.} with unit section $\varepsilon\colon S\to G$. Then $R_{\e S^{\lle\prime}\be/ S}(G_{\lbe S^{\lle\prime}}\be)$ is a commutative and quasi-projective $S$-group scheme with unit section $R_{\e S^{\lle\prime}\be/ S}(\varepsilon_{S^{\lle\prime}}\be)$. See \cite[\S7.6, Theorem 4, p.~194]{blr}, \cite[II, Corollary 4.5.4]{ega} and \cite[Proposition A.5.8, p.~513]{cgp}. Further, by \cite[Proposition A.5.2(3), p.~506]{cgp}, the $S$-morphisms $R_{\e S^{\lle\prime}\be/ S}(n_{\lle G_{\be\lle S^{\prime}}}\be)$ and $n_{R_{S^{\lle\prime}\!/\lbe S}(G_{\be\lle S^{\prime}}\lbe)}$ can be identified, whence $R_{\e S^{\lle\prime}\be/ S}(G_{\be S^{\prime}}\lbe)_{n}=R_{\e S^{\lle\prime}\be/ S}(G_{\lbe n,\e S^{\lle\prime}}\be)$.
Now $j_{\e G,\e S^{\lle\prime}\be/ S}\colon G\hookrightarrow R_{\e S^{\lle\prime}\be/ S}(G_{\lbe S^{\lle\prime}}\lbe)$ \eqref{imor} is a closed immersion \cite[Proposition A.5.7, p.~510]{cgp} which induces a closed immersion of commutative and quasi-projective $S$-group schemes $(\e j_{\e G,\e S^{\lle\prime}\be/ S})_{n}=j_{\e G_{n},\e S^{\lle\prime}\be/ S}\colon G_{\lbe n}\hookrightarrow  R_{\e S^{\lle\prime}\be/ S}(G_{\lbe n,\e S^{\lle\prime}}\be)$. We also note that the composition of $\spp$-morphisms
\begin{equation}\label{ncom}
G_{\lbe S^{\le\prime}}\overset{\! (\e\le j_{G\lbe,\lle S^{\lle\prime}\!/\lbe S})_{\lbe S^{\le\prime}}}{\hookrightarrow } R_{\e S^{\lle\prime}\be/ S}(G_{\lbe S^{\lle\prime}}\lbe)_{S^{\le\prime}}\overset{\!\theta_{G_{\be S^{\prime}}\lbe,\e S^{\le\prime}\!/\lbe S}}{\lra}G_{\lbe S^{\le\prime}}
\end{equation}
equals $n_{\lle G_{\be\lle S^{\prime}}}=(n_{\lle G})_{S^{\prime}}$, where $\theta_{G_{\be S^{\prime}}\lbe,\e S^{\le\prime}\!/\lbe S}$ is given by \eqref{the} (see, e.g., \cite[p.~511, lines 23-31]{cgp}).

\begin{lemma}\label{bic} If $G$ is a smooth, commutative and quasi-projective $S$-group scheme with connected fibers, then $R_{\e S^{\lle\prime}\be/ S}(G_{\lbe S^{\lle\prime}}\lbe)$ is (respectively) a smooth, commutative and quasi-projective $S$-group scheme with connected fibers. 
\end{lemma}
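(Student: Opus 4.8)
The plan is to verify each of the three properties --- commutativity, quasi-projectivity, and (geometric) connectedness of fibers --- separately, since the first two are essentially recorded already in the discussion preceding the lemma. For commutativity one simply notes that the Weil restriction of a commutative group scheme is commutative: the functor $T\mapsto\hom_{S^{\lle\prime}}(T\times_{S}S^{\lle\prime},G_{\lbe S^{\lle\prime}})$ takes values in abelian groups because $G_{\lbe S^{\lle\prime}}$ is a commutative $S^{\lle\prime}$-group scheme, and representability then forces the group structure on $R_{\e S^{\lle\prime}\be/ S}(G_{\lbe S^{\lle\prime}})$ to be commutative. Quasi-projectivity was already cited from \cite[Proposition A.5.8, p.~513]{cgp} (together with \cite[\S7.6, Theorem 4, p.~194]{blr} and \cite[II, Corollary 4.5.4]{ega}) in the paragraph opening this section, so here I would just reference that. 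Smoothness likewise: Weil restriction along a finite flat morphism preserves smoothness by \cite[Proposition A.5.2(4)]{cgp} (or the corresponding statement in \cite[\S7.6]{blr}), so this reduces to a citation as well.

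The substantive point --- and the one I expect to be the main obstacle --- is connectedness of the fibers of $R_{\e S^{\lle\prime}\be/ S}(G_{\lbe S^{\lle\prime}})$. Here the strategy is to reduce to a statement over an algebraically closed field and invoke the known structure of the Weil restriction there. Fix a geometric point $\bar s\colon\spec\Omega\to S$. By Corollary \ref{mb} (applied after a suitable fppf, indeed finite étale, base change that splits $\spp$ over $\bar s$, which is possible because $f$ is finite flat so the fiber $\spp\times_{S}\spec\Omega$ is a finite $\Omega$-scheme, hence a disjoint union of spectra of local Artinian $\Omega$-algebras), or more directly by Proposition \ref{wtf} together with base change compatibility of Weil restriction, the fiber $R_{\e S^{\lle\prime}\be/ S}(G_{\lbe S^{\lle\prime}})\times_{S}\spec\Omega$ becomes a product $\prod_{i}R_{A_{i}/\Omega}(G_{A_{i}})$ over the local factors $A_{i}$ of $\spp\times_{S}\spec\Omega$. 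So it suffices to show that for a finite local $\Omega$-algebra $A$ with $\Omega$ algebraically closed, $R_{A/\Omega}(G_{A})$ is connected whenever $G$ has connected fibers, i.e. $G_{\Omega}$ is connected. A finite product of connected schemes over $\Omega$ is connected, so the reduction to a single local factor is harmless.

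For the single-factor statement the idea is as follows. Write $A$ as a finite local $\Omega$-algebra with residue field $\Omega$ and maximal ideal $\mm$ which is nilpotent, say $\mm^{N+1}=0$. The reduction map $R_{A/\Omega}(G_{A})\to R_{(A/\mm)/\Omega}(G_{\Omega})=G_{\Omega}$ induced by $A\twoheadrightarrow A/\mm=\Omega$ is a morphism of $\Omega$-group schemes; its target is connected by hypothesis. To conclude I would show this reduction map is faithfully flat with connected --- indeed geometrically connected, even affine space-like --- kernel, so that connectedness of the total space follows from connectedness of base and fibers. Concretely, filtering $A$ by powers of $\mm$ gives a tower of surjections with square-zero kernels, and for a square-zero extension $A'\twoheadrightarrow A''$ with kernel $I$ (a finite-dimensional $\Omega$-vector space, since $A$ is finite over $\Omega$) and $G$ smooth, the induced map $R_{A'/\Omega}(G_{A'})\to R_{A''/\Omega}(G_{A''})$ is surjective with kernel a vector group $R_{\Omega}(\mathrm{Lie}(G)\otimes I)\cong\G_{a}^{\,m}$ over $\Omega$ --- this is the standard infinitesimal deformation computation for smooth group schemes, and smoothness of $G$ (hence of $G_{A'}$) is exactly what guarantees surjectivity. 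An affine space $\G_{a}^{\,m}$ over $\Omega$ is connected, and a faithfully flat morphism of finite type whose base and fibers are connected has connected total space; iterating up the tower from $G_{\Omega}$ through each square-zero layer yields that $R_{A/\Omega}(G_{A})$ is connected, completing the proof.

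The main obstacle, as indicated, is packaging the infinitesimal-deformation argument cleanly: one must know that for $G$ smooth the relative tangent space computation identifies $\krn[R_{A'/\Omega}(G_{A'})\to R_{A''/\Omega}(G_{A''})]$ with a Weil-restricted (hence, over a field, split) vector group and that the reduction map is smooth surjective. If one prefers to avoid this, an alternative is to cite directly that Weil restriction along a finite flat morphism of an $S$-group scheme with connected fibers has connected fibers --- this is essentially \cite[Proposition A.5.9]{cgp} or can be extracted from \cite[\S7.6, Proposition 5]{blr} --- in which case the entire lemma becomes a concatenation of references to \cite{cgp} and \cite{blr}, and the only thing to check is that the hypotheses there (smoothness, the finite locally free base, etc.) are met in our setting, which they are by the standing assumptions on $f$ and $G$.
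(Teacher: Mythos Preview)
Your proposal is correct. The paper's own proof is the one-line concatenation of references that you describe in your final paragraph: existence, commutativity and quasi-projectivity are deferred to the discussion preceding the lemma, and smoothness and connectedness of fibers are handled by citing \cite[\S7.6, Proposition 5, p.~195]{blr}, \cite[Proposition A.5.9, p.~514]{cgp} and \cite[${\rm VI_{A}}$, Proposition 2.1.1]{sga3}. So your ``alternative'' is in fact exactly the paper's argument.

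The bulk of your write-up goes further and sketches a direct proof of the connectedness assertion: pass to a geometric fiber, split the finite $\Omega$-algebra into local Artinian factors via Proposition~\ref{wtf}, and for each local factor filter by powers of the maximal ideal, using smoothness of $G$ to identify successive kernels with vector groups. This is correct and is essentially what underlies \cite[Proposition A.5.9]{cgp}; indeed, a version of this same d\'evissage reappears later in the paper (Lemma~\ref{rad}, Proposition~\ref{wrp0}, Proposition~\ref{con}) when treating the norm-one group scheme, where no off-the-shelf citation is available. Your approach thus buys a self-contained argument and anticipates machinery needed later, at the cost of length; the paper's approach is more economical here because the full Weil restriction (as opposed to the norm-one kernel) is covered directly by the cited references.
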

\begin{proof} The existence, commutativity and quasi-projectivity of $R_{\e S^{\lle\prime}\be/ S}(G_{\lbe S^{\lle\prime}}\lbe)$ has been noted above. For the smoothness and connectedness of its fibers, see \cite[\S7.6, Proposition 5, p.~195]{blr}, \cite[Proposition A.5.9, pp.~514]{cgp} and \cite[${\rm VI_{A}}$, Proposition 2.1.1]{sga3}. 
\end{proof}

Next let 
\begin{equation}\label{tmor}
N_{G,\e S^{\lle\prime}\be/ S}\colon R_{\e S^{\lle\prime}\be/ S}(G_{\lbe S^{\lle\prime}}\be)\to G
\end{equation}
be the norm morphism  defined in \cite[XVII, 6.3.13.1 and 6.3.14(a)]{sga4}. By \cite[XVII, Proposition 6.3.17]{sga4}, \eqref{tmor} is uniquely determined by the properties of being functorial in $G$, compatible with compositions and arbitrary base changes and by the fact that the composition
\begin{equation}\label{nis}
n_{\le G}\colon G\overset{j_{\e G,\le S^{\lle\prime}\be/ S}}{\hookrightarrow } R_{\e S^{\lle\prime}\be/ S}(G_{\lbe S^{\lle\prime}}\be)\overset{\!N_{G,\e S^{\lle\prime}\be/S}}{\lra} G
\end{equation}
is the $n$-th power morphism on $G$.

\begin{proposition}\label{nsm} If $G$ is smooth over $S$, then $N_{G,\e S^{\lle\prime}\be/S}$ \eqref{tmor} is smooth and surjective.
\end{proposition}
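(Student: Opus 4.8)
The plan is to reduce the assertions about $N_{G,\e S^{\lle\prime}\be/S}$ to local properties that can be checked after faithfully flat base change, and then to exploit the explicit structure of the norm morphism that becomes visible over such a base. First I would observe that both smoothness and surjectivity of a morphism are fppf-local on the base, so it suffices to prove the claim after base changing along a suitable faithfully flat morphism $S^{\e\prime\prime}\to S$. Here the natural choice is a morphism that splits $\spp$, i.e. one for which $S^{\e\prime}\times_{S}S^{\e\prime\prime}\simeq\coprod_{i=1}^{\e n}S^{\e\prime\prime}$; such a morphism exists fppf-locally on $S$ because $f$ is finite locally free of rank $n$ (one can take $S^{\e\prime\prime}$ to be an appropriate iterated fiber product of copies of $\spp$, or more cleanly invoke the usual splitting argument). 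By the compatibility of $N_{G,\e S^{\lle\prime}\be/S}$ with base change and Corollary \ref{mb}, after this base change the source $R_{\e S^{\lle\prime}\be/ S}(G_{\lbe S^{\lle\prime}}\lbe)\times_{S}S^{\e\prime\prime}$ becomes identified with $G^{\le n}_{\lbe S^{\lle\prime\prime}}$.

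Next I would identify the base-changed norm morphism explicitly. Under the identification $R_{\e S^{\lle\prime}\be/ S}(G_{\lbe S^{\lle\prime}}\lbe)_{S^{\e\prime\prime}}\simeq G^{\le n}_{\lbe S^{\lle\prime\prime}}$ coming from Proposition \ref{wtf}, the norm morphism $N_{G,\e S^{\lle\prime}\be/S}$ base-changed to $S^{\e\prime\prime}$ is the multiplication (group-law) map $m^{(n)}\colon G^{\le n}_{\lbe S^{\lle\prime\prime}}\to G_{\lbe S^{\lle\prime\prime}}$, $(g_{1},\dots,g_{n})\mapsto g_{1}\cdots g_{n}$. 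This can be checked by the uniqueness characterization in \cite[XVII, Proposition 6.3.17]{sga4}: the multiplication map is functorial in $G$, compatible with compositions and base change, and its precomposition with the diagonal $j_{\e G,\le S^{\lle\prime}\be/ S}$ (which over the split base is the diagonal $G\to G^{\le n}$) is the $n$-th power morphism $n_{\le G}$ — exactly the properties \eqref{nis} that pin down the norm. Alternatively one invokes the standard fact, recorded in \cite[XVII]{sga4}, that the norm along a split cover is the product over the factors.

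Once the base-changed norm is the multiplication map $m^{(n)}$ on $G^{\le n}_{\lbe S^{\lle\prime\prime}}$, both properties are routine. Surjectivity is clear since already the restriction of $m^{(n)}$ to the last factor (with the others set to the identity section) is the identity morphism $G_{\lbe S^{\lle\prime\prime}}\to G_{\lbe S^{\lle\prime\prime}}$, which is surjective; hence $m^{(n)}$ is an epimorphism of schemes. For smoothness, note that $m^{(n)}$ factors as the composite of the automorphism $(g_{1},\dots,g_{n})\mapsto(g_{1}g_{2}\cdots g_{n},g_{2},\dots,g_{n})$ of $G^{\le n}_{\lbe S^{\lle\prime\prime}}$ followed by the projection $G^{\le n}_{\lbe S^{\lle\prime\prime}}\to G_{\lbe S^{\lle\prime\prime}}$ onto the first factor; the first map is an isomorphism and the second is smooth because $G$ is smooth over $S$ (so $G^{\le n-1}$, the fiber of the projection, is smooth over $S$ and the projection is a base change of $G^{\le n-1}\to S$). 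Therefore $N_{G,\e S^{\lle\prime}\be/S}\times_{S}S^{\e\prime\prime}$ is smooth and surjective, and by fppf descent so is $N_{G,\e S^{\lle\prime}\be/S}$.

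I expect the only real subtlety to be the identification, in the second step, of the base-changed norm with the $n$-fold multiplication map — one must be careful about how the isomorphism of Proposition \ref{wtf} interacts with the base-change compatibility of the norm, and it is cleanest to argue via the uniqueness characterization of \cite[XVII, Proposition 6.3.17]{sga4} rather than by unwinding definitions. Everything else (fppf-locality of smoothness and surjectivity, existence of a splitting cover, the final factorization argument) is standard.
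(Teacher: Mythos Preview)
Your argument has a genuine gap at the very first step: the existence of an fppf cover $S^{\e\prime\prime}\to S$ that \emph{splits} $\spp$, i.e., with $\spp\times_{S}S^{\e\prime\prime}\simeq\coprod_{i=1}^{n}S^{\e\prime\prime}$, is \emph{not} guaranteed for an arbitrary finite faithfully flat morphism $f$. Indeed, if such an $S^{\e\prime\prime}$ existed then $\spp\times_{S}S^{\e\prime\prime}\to S^{\e\prime\prime}$ would be \'etale (being a trivial cover), and since \'etaleness descends along fppf covers this would force $f$ itself to be \'etale. But the standing hypotheses in this section only require $f$ to be finite and faithfully flat of constant rank; for instance $\spp=\spec k[x]/(x^{2})\to\spec k=S$ is finite locally free of rank $2$ and admits no splitting cover whatsoever. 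Your parenthetical suggestions (iterated fibre products of $\spp$, or the ``usual splitting argument'') both presuppose the \'etale case and do not help here.

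The paper's proof circumvents this by a different and more robust reduction: it base changes along the fppf cover $\spp\to S$ itself (which always exists) and then identifies $(N_{G,\e S^{\lle\prime}\be/S})_{\spp}$ not with a multiplication map but with the universal morphism $\theta_{G_{S^{\prime}},\e S^{\prime}\!/S}\colon R_{S^{\prime}/S}(G_{S^{\prime}})_{S^{\prime}}\to G_{S^{\prime}}$ of \eqref{the}, using the uniqueness characterization from \cite[XVII, Proposition 6.3.17]{sga4} together with \eqref{ncom}. The smoothness and surjectivity of $\theta$ are then imported directly from \cite[Proposition A.5.11(1)]{cgp}, which holds for arbitrary finite locally free $f$. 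Your factorization idea in the final paragraph is perfectly fine once one is in the split situation, but the paper's approach is what is needed to handle the general (possibly ramified or inseparable) case.
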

\begin{proof} By \cite[Proposition A.5.11(1), p.~516]{cgp}, $\theta_{\le G_{\be S^{\prime}},\e S^{\le\prime}\be/S}\colon R_{\e S^{\lle\prime}\be/ S}(G_{\lbe S^{\lle\prime}}\be)_{S^{\lle\prime}}\to G_{S^{\lle\prime}}$ \eqref{the} is a smooth and surjective morphism of $\spp$-group schemes. Now, if we identify
$R_{\e S^{\lle\prime}\be/ S}(G_{\lbe S^{\lle\prime}}\lbe)_{S^{\le\prime}}$ and
$R_{\e S^{\lle\prime\prime}\be/ S^{\lle\prime}}(G_{\lbe S^{\lle\prime\prime}}\lbe)$ via \cite[(2.40), p.~16]{gfr}, where $S^{\lle\prime\prime}=S^{\lle\prime}\times_{S}S^{\lle\prime}$, then $(\e j_{\le G,\lle S^{\lle\prime}\!\lbe/\lbe S})_{S^{\le\prime}}\colon
G_{\lbe S^{\le\prime}}\to R_{\e S^{\lle\prime}\be/ S}(G_{\lbe S^{\lle\prime}}\lbe)_{S^{\le\prime}}$ is identified with
$j_{\e G_{\lbe S^{\lle\prime}},\le S^{\lle\prime\prime}\!\lbe/\lbe S^{\lle\prime}}$ and $\theta_{\le G_{\lbe S^{\prime}},\e S^{\le\prime}\be/S}$ is identified with a map $R_{\e S^{\lle\prime\prime}\!/ S^{\lle\prime}}(G_{\lbe S^{\lle\prime\prime}}\lbe)\to G_{\lbe S^{\lle\prime\prime}}$ that has the same properties that characterize $N_{\e G_{\lbe S^{\lle\prime}},\e S^{\lle\prime\prime}\!\lbe/\lbe S^{\lle\prime}}$. See \eqref{ncom} and \eqref{nis}. Thus we may identify $\theta_{\le G_{\be S^{\prime}},\e S^{\le\prime}\be/S}$ and $N_{\e G_{\lbe S^{\lle\prime}},\e S^{\lle\prime\prime}\!\lbe/\lbe S^{\lle\prime}}$, whence $(N_{G, \e S^{\lle\prime}\be/ S})_{\lbe S^{\lle\prime}}=N_{\e G_{\lbe S^{\lle\prime}},\e S^{\lle\prime\prime}\!\lbe/\lbe S^{\lle\prime}}$ is a smooth and surjective morphism of $\spp$-group schemes. The proposition now follows from  \cite[${\rm IV}_{4}$, Corollary 17.7.3(ii)]{ega} and \cite[Proposition 3.6.4, p.~245]{ega1}.
\end{proof}

The {\it norm one group scheme associated to $(f, G\e)$} is the $S$-group scheme
\[
R_{\e S^{\lle\prime}\!/\lbe S}^{\e(1)}\lbe(G\e)=\krn[R_{\e S^{\lle\prime}\be/ S}(G_{\lbe S^{\lle\prime}}\be)\overset{\!\be N_{\lbe G,\le S^{\lle\prime}\!\lbe/ S}}{\lra} G\,],
\]
where $N_{\lbe G,\le S^{\lle\prime}\!\lbe/ S}$ is the norm morphism \eqref{tmor}. 
If $G$ is smooth over $S$, then $R_{\e S^{\lle\prime}\!/\lbe S}^{\e(1)}(G\e)$ is smooth over $S$ by Proposition \ref{nsm}. Further, in this case $N_{\lbe G,\le S^{\lle\prime}\!\lbe/ S}$ is faithfully flat and the sequence of $S$-group schemes
\[
0\to R_{\e S^{\lle\prime}\!/\lbe S}^{\e(1)}\lbe(G\e)\to R_{\e S^{\lle\prime}\be/ S}(G_{\lbe S^{\lle\prime}}\be)\overset{\!\be N_{\lbe G,\le S^{\prime}\!\lbe/\lbe S}}{\lra} G\to 0
\]
is exact relative to the \'etale topology on $S$.

\begin{lemma} \label{vnice} If $S^{\e\prime}\simeq \coprod_{\e i=1}^{\e n}S$, then there exist canonical isomorphisms of $S$-group schemes $R_{\e S^{\lle\prime}\!/\lbe S}\lbe(G\e)\isoto G^{\e n}$ and $R_{\e S^{\lle\prime}\!/\lbe S}^{\e(1)}\lbe(G\e)\isoto G^{\e n-1}$.
\end{lemma}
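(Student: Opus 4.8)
The plan is to reduce everything to the case $S^{\e\prime}=\coprod_{i=1}^{n}S$ via Proposition \ref{wtf}, and then compute the two kernels directly. First I would apply Proposition \ref{wtf} with all $S_{i}^{\e\prime}=S$ and $X^{\prime}=G_{S^{\lle\prime}}$. Since $G_{S^{\lle\prime}}\times_{S^{\lle\prime}}S_{i}^{\e\prime}=G$ for each $i$, the proposition gives a canonical isomorphism of $S$-group schemes (functoriality of the bijections in the proof shows it respects the group structure, as each $\hom_{S}(Y,-)$ is a group and all maps are homomorphisms)
\[
R_{\e S^{\lle\prime}\!/\lbe S}(G_{\lbe S^{\lle\prime}}\lbe)\isoto\prod_{i=1}^{n}R_{S/S}(G)=G^{\e n},
\]
using that $R_{S/S}(G)=G$ canonically (the Weil restriction along the identity is the identity functor). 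This establishes the first isomorphism.

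Next I would identify the norm morphism $N_{G,\e S^{\lle\prime}\!/\lbe S}\colon R_{\e S^{\lle\prime}\!/\lbe S}(G_{\lbe S^{\lle\prime}})\to G$ under this isomorphism with the multiplication (sum) map $G^{\e n}\to G$, $(g_{1},\dots,g_{n})\mapsto g_{1}+\dots+g_{n}$. The cleanest way is to invoke the characterization from \cite[XVII, Proposition 6.3.17]{sga4} recalled after \eqref{tmor}: $N_{G,\e S^{\lle\prime}\!/\lbe S}$ is the unique morphism that is functorial in $G$, compatible with base change, and whose composition with $j_{\e G,\le S^{\lle\prime}\!/\lbe S}$ is $n_{\le G}$. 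Under the identification above, $j_{\e G,\le S^{\lle\prime}\!/\lbe S}\colon G\hookrightarrow G^{\e n}$ becomes the diagonal embedding $g\mapsto(g,\dots,g)$ (this follows by tracing $j$ through the bijections of Proposition \ref{wtf}, or from the compatibility of $j$ with the product decomposition), and the sum map $G^{\e n}\to G$ does satisfy all three properties — functoriality and base-change compatibility are clear, and its composition with the diagonal is $g\mapsto n g=n_{\le G}$. By uniqueness, $N_{G,\e S^{\lle\prime}\!/\lbe S}$ is the sum map. Therefore
\[
R_{\e S^{\lle\prime}\!/\lbe S}^{\e(1)}(G\e)=\krn[\e\textstyle\sum\colon G^{\e n}\to G\e]\isoto G^{\e n-1},
\]
where the last isomorphism is, for instance, $(g_{1},\dots,g_{n})\mapsto(g_{1},\dots,g_{n-1})$ with inverse $(h_{1},\dots,h_{n-1})\mapsto(h_{1},\dots,h_{n-1},-(h_{1}+\dots+h_{n-1}))$; this is an isomorphism of $S$-group schemes since it and its inverse are morphisms of $S$-schemes and group homomorphisms on points.

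The main obstacle I anticipate is not any deep computation but rather the bookkeeping needed to check that the functorial bijection of Proposition \ref{wtf} is compatible with the group-scheme structures and, more delicately, that it carries $j_{\e G,\le S^{\lle\prime}\!/\lbe S}$ to the diagonal and $N_{G,\e S^{\lle\prime}\!/\lbe S}$ to the sum map. For the first point, one observes that every arrow in the chain of bijections in the proof of Proposition \ref{wtf} is induced by a ring/scheme map and is therefore a homomorphism when the Hom-sets carry their natural group structure (using that $G$, hence $G^{\e n}$ and all the Weil restrictions, are commutative group schemes). For the identification of $j$ and $N$, I would rely on the uniqueness statement from \cite[XVII, Proposition 6.3.17]{sga4} as above, which lets me avoid an explicit chase: I only need to exhibit \emph{some} morphism $G^{\e n}\to G$ with the three characterizing properties, and the sum map visibly works once the diagonal is identified with $j$. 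Since the statement concerns $S$-group schemes (not merely fppf sheaves), I would note at the end that all the isomorphisms produced are morphisms of $S$-schemes that are group homomorphisms on $T$-points functorially, hence isomorphisms of $S$-group schemes.
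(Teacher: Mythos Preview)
Your approach is essentially the paper's: both obtain $R_{S^{\prime}/S}(G_{S^{\prime}})\simeq G^{\,n}$ from Proposition~\ref{wtf} (the paper packages this as Corollary~\ref{mb} with $S^{\prime\prime}=S$) and then identify the norm with the product morphism $G^{\,n}\to G$, after which the kernel is visibly $G^{\,n-1}$.

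There is one genuine slip in your identification of the norm. The uniqueness statement from \cite[XVII, Proposition 6.3.17]{sga4} recalled after \eqref{tmor} requires compatibility with \emph{compositions} as well as with base change; you drop this from your ``three properties'', and without it uniqueness fails. For instance, the map $(g_{1},\dots,g_{n})\mapsto n\cdot g_{1}$ is functorial in $G$, compatible with base change, and its composition with the diagonal is $n_{G}$, yet it is not the sum map for $n\geq 2$. Verifying the composition condition for your candidate would force you to compare with the norm for non-split covers, which defeats the purpose. The paper avoids this entirely by citing \cite[XVII, Proposition 6.3.15(iii)]{sga4}, which asserts directly that for the trivial cover $S^{\prime}=\coprod_{i} S$ the norm is the product morphism; replacing your uniqueness argument with that citation closes the gap with no further work.
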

\begin{proof} The first isomorphism follows from Corollary \ref{mb} (with $\sppp=S$ there). Under this isomorphism, the norm morphism $N_{\lbe G,\le S^{\lle\prime}\!\lbe/ S}\colon R_{\e S^{\lle\prime}\!/\lbe S}\lbe(G\e)\to G$ corresponds to the product morphism $G^{\e n}\to G$. See  \cite[XVII, Proposition 6.3.15(iii)]{sga4}. The second isomorphism then follows.
\end{proof}

\begin{proposition} \label{wrp2} Let $\spp\to S$ correspond to $B/F$, where $F$ is a field and $B$ is a nonzero, finite and \'etale $F$-algebra of rank $n\geq 2$. Then $R_{\e B/F}(G\e)$ and $R_{\e B/F}^{\e(1)}(G\e)$ are forms of $G^{\le\lle n}$ and $G^{\le\lle n-1}$, respectively.
\end{proposition}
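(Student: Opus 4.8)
The plan is to reduce the statement to the split case handled in Lemma~\ref{vnice} by passing to a finite étale cover of $F$ that splits $B$. Since $B$ is a nonzero finite étale $F$-algebra of rank $n$, there is a finite separable extension $L/F$ (for instance a finite Galois extension containing the splitting fields of all the factors of $B$, or simply $L=F^{\sep}$ if one prefers to work over a separable closure and then descend to a finite level) such that $B\otimes_{F}L\simeq L^{\e n}$ as $L$-algebras. Equivalently, writing $S=\spec F$ and $T=\spec L$, we have $\spp\times_{S}T\simeq\coprod_{i=1}^{n}T$ over $T$. The morphism $T\to S$ is finite, faithfully flat (even étale) of constant rank $[L:F]$, so it is an fppf covering of $S$, and base change along $T\to S$ is exactly the operation under which "form of $X$" is defined.

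Next I would invoke the base-change compatibility of the Weil restriction functor. By the isomorphism of $T$-schemes $R_{\e\spp/S}(G_{\spp})\times_{S}T\simeq R_{\e\spp\times_{S}T/T}\!\big((G_{\spp})_{\spp\times_{S}T}\big)$ (the compatibility of $\theta$ with arbitrary base change, cited in the paper as \cite[(2.40)--(2.41)]{gfr}; it is also the isomorphism used in the proof of Corollary~\ref{mb}), together with $\spp\times_{S}T\simeq\coprod_{i=1}^{n}T$ and the fact that base change commutes with fibre products so that $(G_{\spp})_{\spp\times_{S}T}\simeq G_{\e T}$ on each copy of $T$, Lemma~\ref{vnice} applied over the base $T$ gives a canonical isomorphism of $T$-group schemes
\[
R_{\e\spp/S}(G_{\spp})\times_{S}T\;\isoto\;(G_{\e T})^{\e n}\;=\;(G^{\e n})_{T}.
\]
This exhibits $R_{B/F}(G)=R_{\e\spp/S}(G_{\spp})$ as an $(\spp/S)$-form — hence a form — of $G^{\e n}$. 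The argument for $R_{B/F}^{\e(1)}(G)$ is the same: one must check that the norm morphism $N_{G,\e\spp/S}$ base-changes along $T\to S$ to the norm morphism $N_{G_{T},\e\spp\times_{S}T/T}$, which follows from the compatibility of \eqref{tmor} with arbitrary base change (\cite[XVII, Proposition 6.3.17]{sga4}); under the split identification over $T$ the latter is the product morphism $(G_{T})^{\e n}\to G_{T}$ by \cite[XVII, Proposition 6.3.15(iii)]{sga4}, so its kernel is $(G_{T})^{\e n-1}$. Taking kernels commutes with the flat base change $T\to S$, whence $R_{B/F}^{\e(1)}(G)\times_{S}T\simeq (G^{\e n-1})_{T}$.

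The only mild subtlety — and the step I expect to need the most care — is the choice and finiteness of $L$: one wants a \emph{finite} separable $L/F$ splitting $B$ (so that $T\to S$ is an honest fppf, indeed étale, covering and the resulting object is a form in the intended sense), which is guaranteed because $B$ is a finite product of finite separable field extensions of $F$ and we may take $L$ to be a finite Galois extension containing all of them. Everything else is bookkeeping: matching up $\theta$, $N$, and $j$ under base change exactly as in the proof of Proposition~\ref{nsm}, and using that $G$ being quasi-projective (automatic over a field, as noted in the footnote) ensures the relevant Weil restrictions exist as schemes. No new ideas beyond Lemma~\ref{vnice} and the base-change formalism already assembled in Section~\ref{pre} and the start of Section~\ref{nm1} are required.
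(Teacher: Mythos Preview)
Your proposal is correct and follows essentially the same route as the paper: split $B$ by base change to a separable extension, then invoke Lemma~\ref{vnice} (via the base-change compatibility of the Weil restriction and of the norm). The only cosmetic differences are that the paper works directly over the separable closure $F^{\e\rm s}$ rather than a finite subextension, and explicitly writes $B\simeq\prod_{i}K_{i}$ before tensoring, whereas you package this as ``choose $L$ with $B\otimes_{F}L\simeq L^{\le n}$''; your extra care about the finiteness of $L$ and the base-change behaviour of $N_{G,\e\spp/S}$ is sound but not strictly needed once one is content (as the paper is) with trivialising over $F^{\e\rm s}$.
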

\begin{proof} By \cite[$\text{IV}_{4}$, Corollary 17.4.2$({\text d}^{\le\prime}\e)$]{ega}, there exists an isomorphism of $F$-algebras $B\simeq \prod_{\e i=1}^{\e r} K_{\le i}$ for some positive integer $r$, where each $K_{i}$ is a finite and separable extension of $F$. Set $f_{i}=[K_{i}\le\colon\! F\,]$, so that $n={\rm rank}_{F}B=\sum_{\e i=1}^{\e r}f_{i}$. We now choose a separable closure $F^{\e\rm s}$ of $F$ containing $K_{i}$ and $F^{\e\rm s}$-isomorphisms $K_{i}\!\otimes_{F}\be F^{\e\rm s}\simeq (F^{\e\rm s})^{f_{i}}$ for $i=1,\dots,r$. Then	$B\otimes_{F}\be F^{\e\rm s}\simeq \prod_{\e i=1}^{\e r} (K_{i}\!\otimes_{F}\be F^{\e\rm s})\simeq \prod_{\e i=1}^{\e r}(F^{\e\rm s})^{f_{i}}\simeq (F^{\e\rm s})^{n}$, whence $\spec B\times_{F}\spec F^{\e\rm s}\simeq \coprod_{\e i=1}^{\e n}\spec F^{\e\rm s}$. Now Lemma \ref{vnice} yields isomorphisms of $F^{\e\rm s}$-\e group schemes $R_{\e B/F}\lbe(G\e)\be\times_{\lbe F}\lbe \spec F^{\e\rm s}\simeq G^{\le\lle n}_{\be F^{\lle\rm s}}$ and
$R_{\e B/F}^{\e(1)}\lbe(G\e)\be\times_{\lbe F}\lbe \spec F^{\e\rm s}\simeq G^{\le\lle n-1}_{\be F^{\lle\rm s}}$, as claimed.	
\end{proof}

\begin{lemma} \label{exa} Let $\spp\to S$ correspond to $B/F$, where $F$ is a field and $B$ is a finite $F$-algebra. Let $0\to G^{\e\prime}\to G\to G^{\e\prime\prime}\to 0$ be an exact sequence of smooth, commutative and quasi-projective $F$-group schemes. Then the given sequence induces exact sequences of smooth, commutative and quasi-projective $F$-group schemes
\[
0\to R_{\e B\lbe/\lbe F}\lbe(G_{\be B}^{\e\prime}\e)\to R_{\e B\lbe/\lbe F}\lbe(G_{\be B})\to R_{\e B\lbe/\lbe F}\lbe(G_{\be B}^{\e\prime\prime}\e)\to 0
\]
and
\[
0\to R_{\e B\lbe/\lbe F}^{\e(1)}\lbe(G^{\e\prime}\e)\to R_{\e B\lbe/\lbe F}^{\e(1)}\lbe(G\e)\to R_{\e B\lbe/\lbe F}^{\e(1)}\lbe(G^{\e\prime\prime}\e)\to 0.
\]
\end{lemma}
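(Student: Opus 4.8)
The plan is to reduce everything to the exactness of Weil restriction on the given short exact sequence of smooth affine (or quasi-projective) $F$-group schemes and then deduce the norm-one statement by a diagonal/kernel chase. First I would treat the first sequence. Since $G^{\e\prime}\to G$ is a closed immersion and $G\to G^{\e\prime\prime}$ is faithfully flat and locally of finite presentation (being a surjection of smooth $F$-group schemes with kernel $G^{\e\prime}$), the functor $R_{\e B\lbe/\lbe F}(-)$ preserves the left-exactness automatically (it is a right adjoint, hence commutes with kernels), so $R_{\e B\lbe/\lbe F}(G_{\be B}^{\e\prime})=\krn[R_{\e B\lbe/\lbe F}(G_{\be B})\to R_{\e B\lbe/\lbe F}(G_{\be B}^{\e\prime\prime})]$. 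For surjectivity of $R_{\e B\lbe/\lbe F}(G_{\be B})\to R_{\e B\lbe/\lbe F}(G_{\be B}^{\e\prime\prime})$ as a map of fppf sheaves on $F$, I would invoke \cite[Proposition A.5.11]{cgp} (or the smoothness-and-surjectivity of $\theta$ together with a faithfully flat descent argument): the middle term is smooth, the map is smooth by the same fiberwise criterion used in Proposition \ref{nsm}, and smooth surjections of $F$-group schemes are fppf-locally split, giving surjectivity on fppf points. Smoothness, commutativity and quasi-projectivity of all three Weil restrictions is Lemma \ref{bic} (applied to each of $G^{\e\prime},G,G^{\e\prime\prime}$, which are smooth with — after passing to identity components if necessary, but here they are already given smooth and quasi-projective). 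So the first sequence is exact.

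Next I would extract the norm-one sequence. By functoriality of the norm morphism $N_{-,\e B/F}$ in the group argument (\cite[XVII, Proposition 6.3.17]{sga4}) there is a commutative diagram with exact rows
\[
\begin{array}{ccccccccc}
0&\to& R_{\e B\lbe/\lbe F}(G_{\be B}^{\e\prime})&\to& R_{\e B\lbe/\lbe F}(G_{\be B})&\to& R_{\e B\lbe/\lbe F}(G_{\be B}^{\e\prime\prime})&\to& 0\\
&&\big\downarrow{\scriptstyle N^{\prime}}&&\big\downarrow{\scriptstyle N}&&\big\downarrow{\scriptstyle N^{\prime\prime}}&&\\
0&\to& G^{\e\prime}&\to& G&\to& G^{\e\prime\prime}&\to& 0,
\end{array}
\]
the vertical maps being the respective norm morphisms, the bottom row the original exact sequence. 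Since $G$ is smooth, Proposition \ref{nsm} gives that $N,N^{\prime},N^{\prime\prime}$ are all smooth and surjective, so each is faithfully flat and each column is a short exact sequence of fppf sheaves with scheme-theoretic kernel $R_{\e B\lbe/\lbe F}^{\e(1)}(-)$. Applying the snake lemma in the category $\sfs$ to this diagram (with the bottom row in degree-$0$ position and the kernels forming the top of the snake) yields an exact sequence of fppf sheaves
\[
0\to R_{\e B\lbe/\lbe F}^{\e(1)}(G^{\e\prime})\to R_{\e B\lbe/\lbe F}^{\e(1)}(G)\to R_{\e B\lbe/\lbe F}^{\e(1)}(G^{\e\prime\prime})\to \cok N^{\prime}=0,
\]
the last equality because $N^{\prime}$ is surjective; the connecting map lands in $\cok N^{\prime}$, which vanishes, so the three-term sequence of kernels is already short exact. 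Finally, each $R_{\e B\lbe/\lbe F}^{\e(1)}(-)$ is smooth over $F$ (as the kernel of a smooth surjection, cf.\ the discussion following Proposition \ref{nsm}), commutative, and a closed $F$-subgroup scheme of the corresponding quasi-projective $R_{\e B\lbe/\lbe F}(-)$, hence quasi-projective by \cite[II, Proposition 5.3.4(i)]{ega}; this gives the asserted structure of all terms.

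The one point requiring genuine care — the main obstacle — is justifying surjectivity of the Weil-restricted and norm-one maps as morphisms of \emph{fppf sheaves} rather than merely on geometric points: a priori $R_{\e B\lbe/\lbe F}$ need not be right-exact, and it is only the smoothness hypotheses that rescue us. The clean way is: a surjection of smooth $F$-group schemes admits sections fppf-locally on the base (indeed smooth-locally), so both $R_{\e B\lbe/\lbe F}(G_{\be B})\twoheadrightarrow R_{\e B\lbe/\lbe F}(G_{\be B}^{\e\prime\prime})$ and $R_{\e B\lbe/\lbe F}^{\e(1)}(G)\twoheadrightarrow R_{\e B\lbe/\lbe F}^{\e(1)}(G^{\e\prime\prime})$, being the base change (along the adjunction) of such surjections combined with the already-established smoothness of the maps themselves, are fppf-locally split and hence epimorphisms of sheaves. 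Equivalently one can verify the valuative/formal-smoothness lifting criterion directly on $T$-points using that $B$ is finite over $F$ so that $T\times_F\spec B\to T$ is finite. Once this is in hand the rest is a formal diagram chase and an appeal to the cited structural results; I would present it in exactly the two-step order above.
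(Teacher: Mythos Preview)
Your proposal is correct and follows essentially the same approach as the paper: the first sequence is the standard exactness of Weil restriction on short exact sequences of smooth $F$-group schemes (the paper simply cites \cite[Proposition A.5.4(3)]{cgp} for this, while you unpack it into left-exactness via the right-adjoint property plus surjectivity via smoothness), and the second sequence is obtained by applying the snake lemma to the diagram of norm maps, using Proposition \ref{nsm} for their surjectivity. The only minor slip is your appeal to Lemma \ref{bic}, whose hypothesis requires connected fibers; the smoothness and quasi-projectivity of the Weil restrictions you need follow directly from the references cited at the opening of Section \ref{nm1} without that hypothesis.
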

\begin{proof} The first sequence is exact by \cite[Proposition A.5.4(3), p.~508]{cgp}. The second sequence follows by applying the snake lemma to the following exact and commutative diagram in $(\spec F\e)_{\fl}^{\le\sim}\e$, whose vertical arrows are surjections by Proposition \ref{nsm}\,:
\[
\xymatrix{
0\ar[r]& R_{\e B\lbe/\lbe F}\lbe(G_{\be B}^{\e\prime}\e)\ar@{->>}[d]^(.45){N_{G^{\lle\prime}\!,\le B\lbe/\lbe F}}
\ar[r]& R_{\e B\lbe/\lbe F}\lbe(G_{\be B})\ar[r]\ar@{->>}[d]^(.45){N_{G\!,\le B\lbe/\lbe F}}& R_{\e B\lbe/\lbe F}\lbe(G_{\be B}^{\e\prime\prime}\e)\ar@{->>}[d]^(.45){N_{G^{\lle\prime\prime}\!,\le B\lbe/\lbe F}}\ar[r]& 0\\
0\ar[r]& G^{\e\prime}\ar[r]& G\ar[r]& G^{\e\prime\prime}\ar[r]& 0.
}
\]
\end{proof}

\begin{proposition} \label{radm} Let $\spp\to S$ correspond to $B/F$, where $F$ is a field and $B$ is a nonzero, finite and \'etale $F$-algebra. If $G$ is a torus (respectively, abelian variety, semiabelian variety) over $F$, then $R_{\e B/F}(G\e)$ and $R_{\e B/F}^{\e(1)}(G\e)$ are tori (respectively, abelian varieties, semiabelian varieties) over $F$.
\end{proposition}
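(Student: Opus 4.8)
The plan is to treat the three cases separately: the torus and abelian variety cases follow by combining Proposition~\ref{wrp2} with faithfully flat descent, and the semiabelian case then reduces to these two via Lemma~\ref{exa}. I may assume throughout that the rank $n$ of $B$ over $F$ satisfies $n\geq 2$, since otherwise $B\simeq F$ and all the assertions are trivial. I would first record the common ground. By Lemma~\ref{bic}, $R_{\e B/F}(G\e)$ is a smooth, commutative $F$-group scheme with geometrically connected fibres, and by Proposition~\ref{nsm} together with the remarks following it, $R_{\e B/F}^{\e(1)}(G\e)$ is a smooth, commutative $F$-group scheme. Moreover, by (the proof of) Proposition~\ref{wrp2} there is a separable closure $F^{\e\rm s}$ of $F$ such that $R_{\e B/F}(G\e)\times_{F}\spec F^{\e\rm s}\simeq G^{\le\lle n}_{\be F^{\lle\rm s}}$ and $R_{\e B/F}^{\e(1)}(G\e)\times_{F}\spec F^{\e\rm s}\simeq G^{\le\lle n-1}_{\be F^{\lle\rm s}}$; since $G$ is geometrically connected in each of the three cases, it follows in particular that $R_{\e B/F}^{\e(1)}(G\e)$ is geometrically connected as well. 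Finally, because $\spec F^{\e\rm s}\to\spec F$ is faithfully flat, any property of $F$-schemes that is fpqc-local on the base may be checked after this base change.

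Next I would dispose of the torus and abelian variety cases. If $G$ is an $F$-torus, then $G^{\le\lle n}_{\be F^{\lle\rm s}}$ and $G^{\le\lle n-1}_{\be F^{\lle\rm s}}$ are split tori, in particular affine; since affineness is fpqc-local on the base, $R_{\e B/F}(G\e)$ and $R_{\e B/F}^{\e(1)}(G\e)$ are affine over $F$, and being also smooth, commutative and split tori over $F^{\e\rm s}$ they are $F$-tori. If $G$ is an abelian variety, then $G^{\le\lle n}_{\be F^{\lle\rm s}}$ and $G^{\le\lle n-1}_{\be F^{\lle\rm s}}$ are abelian varieties, in particular proper over $F^{\e\rm s}$; since properness is fpqc-local on the base, $R_{\e B/F}(G\e)$ and $R_{\e B/F}^{\e(1)}(G\e)$ are proper over $F$, and being also smooth and geometrically connected they are abelian varieties over $F$.

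Finally, the semiabelian case. Write $G$ as an extension $0\to T\to G\to A\to 0$ with $T$ an $F$-torus and $A$ an abelian variety over $F$; these are smooth, commutative and quasi-projective $F$-group schemes. By Lemma~\ref{exa}, the above sequence induces exact sequences of $F$-group schemes
\[
0\to R_{\e B/F}(T\e)\to R_{\e B/F}(G\e)\to R_{\e B/F}(A\e)\to 0
\]
and
\[
0\to R_{\e B/F}^{\e(1)}(T\e)\to R_{\e B/F}^{\e(1)}(G\e)\to R_{\e B/F}^{\e(1)}(A\e)\to 0.
\]
By the torus case already proved, $R_{\e B/F}(T\e)$ and $R_{\e B/F}^{\e(1)}(T\e)$ are $F$-tori, and by the abelian variety case, $R_{\e B/F}(A\e)$ and $R_{\e B/F}^{\e(1)}(A\e)$ are abelian varieties over $F$; hence $R_{\e B/F}(G\e)$ and $R_{\e B/F}^{\e(1)}(G\e)$ are extensions of an abelian variety by a torus, i.e.\ semiabelian varieties over $F$. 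I expect this last case to be the only genuinely delicate point: one does not want to try to descend the property of being semiabelian directly along $F^{\e\rm s}/F$ (which is subtle over imperfect fields, because of the behaviour of the maximal linear subgroup under inseparable base change), but rather to exploit the extension structure through Lemma~\ref{exa}, thereby reducing to the torus and abelian variety cases, where the decisive properties — affineness and properness — are manifestly fpqc-local on the base.
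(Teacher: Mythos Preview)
Your proof is correct and follows essentially the same route as the paper: Proposition~\ref{wrp2} for the torus and abelian variety cases, then Lemma~\ref{exa} for the semiabelian case. The only difference is that the paper leaves implicit the passage from ``form of $G^{n}$'' to ``torus'' (respectively ``abelian variety''), whereas you spell it out via descent of affineness and properness; this extra care is fine but not required, since a form of a torus is a torus by definition, and an $F$-group scheme that becomes an abelian variety over $F^{\e\rm s}$ is automatically an abelian variety over $F$.
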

\begin{proof} If $G$ is a torus (respectively, abelian variety) over $F$, then so also are $R_{\e B/F}(G\e)$ and $R_{\e B/F}^{\e(1)}(G\e)$ by Proposition \eqref{wrp2}. Now, if $G$ is a semiabelian variety over $F$ given as an extension $1\to T\to G\to A\to 1$, where $T$ (respectively, $A$) is a torus (respectively, abelian variety) over $F$, then Lemma \ref{exa} shows that $R_{\e B/F}(G\e)$ and $R_{\e B/F}^{\e(1)}(G\e)$ are semiabelian varieties over $F$ given as extensions
\[
0\to R_{\e B\lbe/\lbe F}\lbe(\e T\e)\to R_{\e B\lbe/\lbe F}\lbe(G\e)\to R_{\e B\lbe/\lbe F}\lbe(A\e)\to 0
\] 
and
\[
0\to R_{\e B\lbe/\lbe F}^{\e(1)}\lbe(\e T\e)\to R_{\e B\lbe/\lbe F}^{\e(1)}\lbe(G\e)\to R_{\e B\lbe/\lbe F}^{\e(1)}\lbe(A\e)\to 0.
\]
\end{proof}

\begin{lemma} \label{rad} Let $\spp\to S$ correspond to $K\be/\be F$, where $F$ is a field and $K$ is a finite and purely inseparable extension of $F$. If $G$ is a smooth, commutative and quasi-projective $F$-group scheme, then $R_{\e K/F}^{\e(1)}\lbe(G\e)$ is a smooth, connected and unipotent $F$-group scheme.
\end{lemma}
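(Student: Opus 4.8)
The plan is to deduce smoothness at once from Proposition \ref{nsm}, and then to establish connectedness and unipotence after base change to $\overline{F}$, where the base of the Weil restriction becomes a local Artinian ring and the classical congruence filtration becomes available. Since $G$ is smooth over $F$, Proposition \ref{nsm} shows that $N_{G,K/F}$ is smooth and surjective, whence $R_{K/F}^{(1)}(G)=\krn N_{G,K/F}$ is smooth over $F$. For group schemes of finite type over a field, connectedness and unipotence are unaffected by extension of the base field, so we may base change along $F\hookrightarrow\overline{F}$. Set $B=K\otimes_{F}\overline{F}$. Because $K/F$ is finite and purely inseparable, $\spec B\to\spec\overline{F}$ is a finite universal homeomorphism onto a point, so $B$ is a local Artinian $\overline{F}$-algebra with residue field $\overline{F}$, nilpotent maximal ideal $\mm$, and $\dim_{\overline{F}}B=[K:F]=n$. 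Writing $G_{B}=G\times_{F}\spec B$, compatibility of the norm morphism with base change yields a canonical isomorphism $R_{K/F}^{(1)}(G)\times_{F}\overline{F}\simeq\krn\bigl[\,N_{G_{\overline{F}},\,B/\overline{F}}\colon R_{B/\overline{F}}(G_{B})\to G_{\overline{F}}\,\bigr]$, so it suffices to prove that this kernel is connected and unipotent.

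Next I would exploit the surjection $B\twoheadrightarrow B/\mm=\overline{F}$. The homomorphism $q\colon R_{B/\overline{F}}(G_{B})\to G_{\overline{F}}$ that it induces is split by the unit $j_{G_{\overline{F}},\,B/\overline{F}}$ of \eqref{imor} (since the residue field of $B$ is $\overline{F}$), so $R_{B/\overline{F}}(G_{B})\simeq G_{\overline{F}}\times V$ as commutative $\overline{F}$-group schemes, where $V=\krn q$. Filtering $B$ by the powers of $\mm$ and using the infinitesimal lifting property of the smooth group $G_{\overline{F}}$, one sees that $V$ carries a finite filtration whose successive quotients are the vector groups $\mathrm{Lie}(G_{\overline{F}})\otimes_{\overline{F}}(\mm^{i}/\mm^{i+1})$; hence $V$ is smooth, connected and unipotent of dimension $(n-1)\dim G$. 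Under the product decomposition, \eqref{nis} shows that $N_{G_{\overline{F}},\,B/\overline{F}}$ restricts to the $n$-th power morphism $n_{G_{\overline{F}}}$ on the factor $G_{\overline{F}}$ and to some homomorphism $\nu\colon V\to G_{\overline{F}}$ on $V$; a short diagram chase then identifies $\krn N_{G_{\overline{F}},\,B/\overline{F}}$ with an extension of the closed subgroup $\nu^{-1}\bigl(n_{G_{\overline{F}}}(G_{\overline{F}})\bigr)\subseteq V$ by the $n$-torsion subgroup $(G_{\overline{F}})_{n}$.

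It remains to see that this extension is connected and unipotent (and of dimension $(n-1)\dim G$). The subgroup $\nu^{-1}\bigl(n_{G_{\overline{F}}}(G_{\overline{F}})\bigr)$ is unipotent, being closed in the unipotent group $V$, and one checks that it is connected. The crux — the step I expect to be the main obstacle — is to control the finite contribution $(G_{\overline{F}})_{n}$: here the purely inseparable hypothesis enters decisively, forcing $n=[K:F]$ to be a power of $p=\mathrm{char}\,F$, so that $n_{G_{\overline{F}}}$ factors through a power of the relative Frobenius of $G_{\overline{F}}$; exploiting this factorisation — if necessary after a d\'evissage in the spirit of Lemma \ref{exa} that reduces $G_{\overline{F}}$ to its abelian-variety, toric and unipotent constituents — should yield that $(G_{\overline{F}})_{n}$, and with it $R_{K/F}^{(1)}(G)$, is connected and unipotent.
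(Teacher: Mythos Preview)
There is a genuine gap in your final step. Your splitting $R_{B/\overline{F}}(G_{B})\simeq G_{\overline{F}}\times V$ via the section $j$ exhibits $(G_{\overline{F}})_{n}=\krn\,n_{G_{\overline{F}}}$ as a closed subgroup of $\krn N$ (since $N\circ j=n_{G}$). You then need $(G_{\overline{F}})_{n}$ to be unipotent, and you suggest this should follow from the factorisation of $n_{G}$ through relative Frobenius together with a d\'evissage into toric, abelian-variety and unipotent constituents. But this is false: already for $G=\bg_{m}$ one has $(G_{\overline{F}})_{n}=\mu_{p^{m}}$, which is of multiplicative type and hence \emph{not} unipotent; for an ordinary abelian variety, $A[p^{m}]$ carries both a nontrivial multiplicative part and a nontrivial \'etale quotient, so it is neither unipotent nor connected. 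The Frobenius factorisation does not help --- for $\bg_{m}$ the relative Frobenius \emph{is} the $p$-th power map, so its iterated kernel is again $\mu_{p^{m}}$. Since closed subgroup schemes of unipotent groups are unipotent, the extension $1\to (G_{\overline{F}})_{n}\to\krn N\to\nu^{-1}(n\cdot G_{\overline{F}})\to 1$ cannot yield a unipotent middle term once a multiplicative piece such as $\mu_{p}$ sits inside it. (The unjustified assertion that $\nu^{-1}(n\cdot G_{\overline{F}})$ is connected is a second, independent gap.)

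The paper's argument is much shorter and sidesteps this decomposition entirely. It base-changes to $K$ rather than to $\overline{F}$, invokes the identification of $(N_{G,K/F})_{K}$ with the counit $\theta_{G_{K},K/F}$ established in the proof of Proposition~\ref{nsm}, and then appeals directly to \cite[Proposition A.5.11(2)]{cgp}, which asserts that $\krn\,\theta_{G_{K},K/F}$ is connected and unipotent; descent from $K$ to $F$ is immediate. No analysis of $G_{n}$ or of a product splitting of the Weil restriction enters.
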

\begin{proof} The smoothness of $R_{\e K/F}^{\e(1)}\lbe(G\e)$ has already been noted. By the proof of Proposition \ref{nsm}, the maps $(N_{G,\le K/F})_{K}$ and $\theta_{G_{\lbe K},\e K/F}$ can be identified. Now, by \cite[Proposition A.5.11(2), p.~517]{cgp}, $\krn\e\theta_{G_{\lbe K},\e K/F}$ is connected and unipotent. Thus $R_{\e K/F}^{\e(1)}\lbe(G\e)_{K}$ is connected and unipotent over $K$, whence $R_{\e K/F}^{\e(1)}\lbe(G\e)$ is connected and unipotent over $F$.
\end{proof}

\begin{lemma} \label{trans} Let $\spp\to S$ correspond to a finite field extension $K/F$ and let $G$ be a smooth, commutative, connected and quasi-projective $F$-group scheme. Then $R_{\e K/F}^{\e(1)}\lbe(G\e)$ is smooth and connected.
\end{lemma}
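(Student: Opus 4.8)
The plan is to prove connectedness (smoothness being immediate from Proposition~\ref{nsm}) by a d\'evissage that separates the separable and the purely inseparable parts of $K/F$. Write $E$ for the separable closure of $F$ inside $K$, so that $E/F$ is finite and separable and $K/E$ is finite and purely inseparable. The first step is to construct a canonical exact sequence of smooth commutative $F$-group schemes
\[
0\to R_{\e E/F}\bigl(R_{\e K/E}^{\e(1)}(G_{E})\bigr)\to R_{\e K/F}^{\e(1)}(G\e)\to R_{\e E/F}^{\e(1)}(G\e)\to 0.
\]
For this I would combine the transitivity of Weil restriction, $R_{\e K/F}=R_{\e E/F}\circ R_{\e K/E}$, with the compatibility of the norm morphism \eqref{tmor} with compositions \cite[XVII, Proposition~6.3.17]{sga4}, which gives $N_{\e G,\e K/F}=N_{\e G,\e E/F}\circ R_{\e E/F}(N_{\e G_{E},\e K/E})$. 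Since $\spec E\to\spec F$ is finite and \'etale, $R_{\e E/F}$ is exact (alternatively, \cite[Proposition~A.5.4(3), p.~508]{cgp}), so applying it to the \'etale-exact norm sequence $0\to R_{\e K/E}^{\e(1)}(G_{E})\to R_{\e K/E}(G_{K})\to G_{E}\to 0$ over $\spec E$ shows that the map $R_{\e E/F}(N_{\e G_{E},\e K/E})\colon R_{\e K/F}(G_{K})\to R_{\e E/F}(G_{E})$ is surjective with kernel $R_{\e E/F}\bigl(R_{\e K/E}^{\e(1)}(G_{E})\bigr)$. Applying Proposition~\ref{ker-cok} to the composite $R_{\e K/F}(G_{K})\to R_{\e E/F}(G_{E})\overset{\!N_{\e G,\e E/F}}{\lra}G$ then yields the displayed sequence.

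Next I would check that the two outer terms are connected. For the left-hand one, Lemma~\ref{rad}, applied to the purely inseparable extension $K/E$ and the smooth commutative quasi-projective $E$-group scheme $G_{E}$, shows that $R_{\e K/E}^{\e(1)}(G_{E})$ is smooth and connected, hence geometrically connected over $E$ (it carries the identity section); therefore $R_{\e E/F}\bigl(R_{\e K/E}^{\e(1)}(G_{E})\bigr)$ is smooth with connected fibers by \cite[\S7.6, Proposition~5, p.~195]{blr}, \cite[Proposition~A.5.9, p.~514]{cgp} and \cite[${\rm VI_{A}}$, Proposition~2.1.1]{sga3}, i.e.\ is connected over $\spec F$. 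For the right-hand one, Proposition~\ref{wrp2} identifies $R_{\e E/F}^{\e(1)}(G\e)$ with an $F^{\e\rm s}$-form of $G^{\le[E:F]-1}$; since $G$ is connected and possesses a rational point it is geometrically connected, so $G^{\le[E:F]-1}$ is geometrically connected over $F$, and hence so is its form $R_{\e E/F}^{\e(1)}(G\e)$, which is in particular connected. (When $E=F$ the extension $K/F$ is purely inseparable and we are done directly by Lemma~\ref{rad}; when $K=E$ the left-hand term of the sequence is trivial; in either degenerate case the argument below still applies with one term trivial.)

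Finally, I would invoke the standard fact that an extension of a connected group scheme by a connected group scheme over a field is connected. Here the quotient map $q\colon R_{\e K/F}^{\e(1)}(G\e)\to R_{\e E/F}^{\e(1)}(G\e)$ is faithfully flat and locally of finite presentation, hence open and surjective, and its fibers are connected, being torsors under the connected group $R_{\e E/F}\bigl(R_{\e K/E}^{\e(1)}(G_{E})\bigr)$; so if $R_{\e K/F}^{\e(1)}(G\e)$ were the disjoint union of two nonempty open subsets, each fiber would lie entirely in one of them and $R_{\e E/F}^{\e(1)}(G\e)$ would inherit such a decomposition, contradicting its connectedness. Hence $R_{\e K/F}^{\e(1)}(G\e)$ is connected, and it is smooth by Proposition~\ref{nsm}.

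I expect the first step to be the main obstacle: one must carefully match the objects and morphisms in the norm composition law under the identification $R_{\e K/F}=R_{\e E/F}\circ R_{\e K/E}$ and verify that $R_{\e E/F}$ carries the \'etale-exact norm sequence over $E$ to an exact sequence over $F$. The remaining steps reduce to direct appeals to Lemma~\ref{rad}, Proposition~\ref{wrp2} and the cited properties of Weil restriction, plus an elementary topological argument.
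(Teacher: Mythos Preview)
Your proposal is correct and follows essentially the same route as the paper's proof: both introduce the separable closure $E$ (the paper calls it $L$) of $F$ in $K$, derive the exact sequence $0\to R_{E/F}\bigl(R_{K/E}^{(1)}(G_{E})\bigr)\to R_{K/F}^{(1)}(G)\to R_{E/F}^{(1)}(G)\to 0$ via Proposition~\ref{ker-cok} applied to the factored norm, and then conclude by combining Lemma~\ref{rad} (left term) with Proposition~\ref{wrp2} (right term). The only cosmetic differences are that the paper also records the unipotence of the left-hand term (irrelevant for the statement at hand) and cites \cite[Lemma~2.55]{gfr} for the ``extension of connected by connected is connected'' step, whereas you spell out the topological argument directly.
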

\begin{proof} Let $L$ denote the separable closure of $F$ in $K$. By Proposition \ref{nsm}, $N_{G_{\lbe L}\lbe, \e K\be/\lbe L}\colon R_{\e K/L}\lbe(G_{\be K})\to G_{\lbe L}$ is a smooth surjection of quasi-projective $L$-group schemes. Thus, by \cite[Corollary A.5.4(3), p.~507]{cgp} and the transitivity of Weil restrictions \cite[(2.41), p.~16]{gfr}, there exists a canonical exact sequence of smooth and quasi-projective $F$-group schemes
\begin{equation}\label{jai}
0\to R_{\e L/\lbe F}(\lbe R_{\e K\lbe/\lbe L}^{\e(1)}\lbe(G_{\lbe L}))\to R_{\e K/F}\lbe(G_{\be K})\to R_{\e L/F}(G_{\be L})\to 0,
\end{equation}
where the third map above can be identified with $R_{L\lbe/\lbe F}(N_{G_{\lbe L}\lbe, \e K\lbe/\lbe L})$. We now apply Lemma \ref{ker-cok} (in the abelian category $(\spec F\e)_{\fl}^{\le\sim}\e$) to the pair of $F$-morphisms
\begin{equation}\label{pai}
R_{\e K/F}\lbe(G_{\be K}\e)\twoheadrightarrow R_{\e L/F}(G_{\be L})\twoheadrightarrow G
\end{equation}
whose composition is $N_{G,\e L\lbe/\lbe F}\be\circ\be R_{L\lbe/\lbe F}(N_{G_{\lbe L}\lbe, \e K\lbe/\lbe L})=N_{G,\e K\lbe/\lbe F}$. Since the kernel of the first map in \eqref{pai} is $R_{\e L/\lbe F}(\lbe R_{\e K\lbe/\lbe L}^{\e(1)}\lbe(G_{\lbe L}))$ by the exactness of \eqref{jai}, we obtain an exact sequence of smooth and quasi-projective $F$-group schemes
\begin{equation}\label{per}
0\to R_{\e L/\lbe F}(\lbe R_{\e K\lbe/\lbe L}^{\e(1)}\lbe(G_{\lbe L}))\to R_{\e K/F}^{\e(1)}\lbe(G\e)\to R_{L/F}^{\e(1)}\lbe(G\e)\to 0.
\end{equation}
By Lemma \ref{rad}, $R_{\e K/L}^{\e(1)}\lbe(G_{\lbe L}\e)$ is smooth, connected and unipotent over $L$, whence $R_{\e L/\lbe F}(\lbe R_{\e K\lbe/\lbe L}^{\e(1)}\lbe(G_{\lbe L}))$ is smooth, connected and unipotent over $F$ by \cite[Proposition A.5.9, p.~514]{cgp} and \cite[Proposition A.3.7, p.~84]{oes}. On the other hand, $R_{L/F}^{\e(1)}\lbe(G\e)$ is connected since it is a form of $G^{\le\lle [L\colon \! F\e]-1}$ by Proposition \ref{wrp2}. The lemma now follows from the exactness of \eqref{per} and \cite[Lemma 2.55]{gfr}.	
\end{proof}

\begin{proposition} \label{wrp0}  Let $\spp\to S$ correspond to $B/F$, where $F$ is a field and $B$ is a local and finite $F$-algebra. Let $G$ be a smooth, commutative, connected and quasi-projective $F$-group scheme. Then $R_{\e B/F}^{\e(1)}\lbe(G\e)$ is smooth and connected.	
\end{proposition}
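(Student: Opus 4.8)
The plan is to follow the pattern of the proofs of Lemmas~\ref{rad} and~\ref{trans}, with the r\^ole of a purely inseparable field extension now played by the finite \emph{radicial} morphism $\spec B\to\spec L$, where $L$ denotes the separable closure of $F$ in the residue field $K=B/\mm_{B}$ of $B$. First I would record the relevant structure: $L/F$ is a finite separable extension, and since the closed immersion $\spec K\hookrightarrow\spec B$ is defined by the nilpotent ideal $\mm_{B}$ and $L/F$ is formally \'etale, the inclusion $L\hookrightarrow K$ lifts uniquely to an $F$-algebra homomorphism $L\to B$. Thus $B$ is a finite and local $L$-algebra with residue field $K$, the extension $K/L$ is purely inseparable, and consequently $\spec B\to\spec L$ is a finite and radicial morphism.

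I would then dispose first of the case $F=L$, i.e., the case in which $\spec B\to\spec F$ is radicial. Here one argues exactly as in the proof of Lemma~\ref{rad}: by the proof of Proposition~\ref{nsm}, $(N_{G,\e B/F})_{B}$ may be identified with $\theta_{G_{B},\e B/F}$ \eqref{the}, and, $\spec B\to\spec F$ being finite and radicial, $\krn\e\theta_{G_{B},\e B/F}$ is a smooth, connected and unipotent $B$-group scheme by \cite[Proposition A.5.11(2), p.~517]{cgp} (extended, by a routine d\'evissage along the powers of $\mm_{B}$, from the case of a purely inseparable field extension to that of a finite radicial morphism). Hence
\[
R_{\e B/F}^{\e(1)}(G\e)_{B}=\krn\e(N_{G,\e B/F})_{B}=\krn\e\theta_{G_{B},\e B/F}
\]
is connected, and since $\spec B\to\spec F$ is faithfully flat it follows that $R_{\e B/F}^{\e(1)}(G\e)$ is connected; its smoothness is already known by Proposition~\ref{nsm}.

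For the general case I would work with the tower $\spec B\to\spec L\to\spec F$. By the case just treated, applied over $L$ to the smooth, commutative, connected and quasi-projective $L$-group scheme $G_{L}$, the group scheme $R_{\e B/L}^{\e(1)}(G_{L})$ is smooth and connected. Using the transitivity of the Weil restriction functor \cite[(2.41), p.~16]{gfr} and of the norm morphism \cite[XVII, Proposition 6.3.17]{sga4}, $N_{G,\e B/F}$ factors as the composition
\[
R_{\e B/F}(G_{B})=R_{\e L/F}(R_{\e B/L}(G_{B}))\xrightarrow{\,R_{\e L/F}(N_{G_{L},\e B/L})\,}R_{\e L/F}(G_{L})\xrightarrow{\,N_{G,\e L/F}\,}G,
\]
whose first arrow is faithfully flat with kernel $R_{\e L/F}(R_{\e B/L}^{\e(1)}(G_{L}))$ (apply the exact functor $R_{\e L/F}$, as in the proof of Lemma~\ref{exa} and by \cite[Corollary A.5.4(3), p.~507]{cgp}, to the exact sequence $0\to R_{\e B/L}^{\e(1)}(G_{L})\to R_{\e B/L}(G_{B})\to G_{L}\to 0$). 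Lemma~\ref{ker-cok}, applied to the displayed composition in the abelian category $(\spec F\e)_{\fl}^{\le\sim}$, then yields an exact sequence of smooth $F$-group schemes
\[
0\to R_{\e L/F}(R_{\e B/L}^{\e(1)}(G_{L}))\to R_{\e B/F}^{\e(1)}(G\e)\to R_{\e L/F}^{\e(1)}(G\e)\to 0.
\]
Now $R_{\e L/F}(R_{\e B/L}^{\e(1)}(G_{L}))$ is smooth and connected by Lemma~\ref{bic}, and $R_{\e L/F}^{\e(1)}(G\e)$ is smooth and connected by Lemma~\ref{trans} (since $L/F$ is a finite field extension). Therefore $R_{\e B/F}^{\e(1)}(G\e)$ is connected by \cite[Lemma 2.55]{gfr}, and smooth by Proposition~\ref{nsm}.

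The main obstacle I anticipate is the radicial base case, where connectedness of $R_{\e B/F}^{\e(1)}(G\e)$ has to be extracted from the structure of $\krn\e\theta_{G_{B},\e B/F}$ via the results of \cite{cgp} (and where one must be slightly careful that the cited statement about $\krn\e\theta$ is available for a finite radicial morphism and not merely for a purely inseparable field extension). Once that case is secured, the passage to an arbitrary finite local $B$ is a purely formal d\'evissage along $\spec B\to\spec L\to\spec F$, entirely parallel to the argument already used for Lemma~\ref{trans}.
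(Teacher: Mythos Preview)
Your d\'evissage via a tower and the kernel--cokernel exact sequence is exactly the paper's strategy, but you choose a different intermediate ring: the paper factors through the \emph{residue field} $K=B/\mm_{B}$ (using Cohen's structure theorem to realise the equicharacteristic artinian local ring $B$ as a $K$-algebra), whereas you factor through the separable closure $L$ of $F$ in $K$. The paper's choice makes the inner step immediate: for $B/K$ one is in the Greenberg--functor setting (an artinian local $K$-algebra with residue field $K$), and \cite[Proposition~20.2]{gfr} gives directly that $R_{B/K}^{(1)}(G_{K})$ is smooth, connected and unipotent over $K$. The outer step $K/F$ is then a genuine finite field extension, handled by Lemma~\ref{trans}. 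The resulting exact sequence
\[
0\to R_{K/F}\bigl(R_{B/K}^{(1)}(G_{K})\bigr)\to R_{B/F}^{(1)}(G)\to R_{K/F}^{(1)}(G)\to 0
\]
finishes the proof exactly as at the end of Lemma~\ref{trans}.

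Your base case, by contrast, is the radicial step $B/L$ (with $L=F$), and here you have correctly identified the obstacle: \cite[Proposition~A.5.11(2)]{cgp} is stated for purely inseparable \emph{field} extensions, and the ``routine d\'evissage along the powers of $\mm_{B}$'' you invoke is not so routine. Each successive quotient $B/\mm_{B}^{\,i+1}\to B/\mm_{B}^{\,i}$ is a square-zero thickening, not a field extension, so the CGP statement about $\krn\,\theta$ does not apply to those steps; what one actually needs for the nilpotent part is precisely the Greenberg-type statement the paper cites. Filling your gap therefore amounts to factoring $B/F$ further as $B/K/F$ (Cohen again) and invoking \cite[Proposition~20.2]{gfr} for $B/K$ together with Lemma~\ref{rad} for the purely inseparable $K/F$; at that point your argument collapses into the paper's, with the extra layer $L$ between $K$ and $F$ no longer needed.
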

\begin{proof} Let $K$ be the residue field of $B$. As in the proof of Lemma \ref{trans}, there exists a canonical exact sequence of smooth and quasi-projective $F$-group schemes
\[
0\to R_{\e K\be/\lbe F}(\lbe R_{\e B\lbe/\lbe K}^{\e(1)}\lbe(G_{\be K}))\to R_{\e B/F}^{\e(1)}\lbe(G\e)\to R_{\e K/F}^{\e(1)}\lbe(G\e)\to 0,
\]
where the right-hand group above is connected by Lemma \ref{trans}. Now, by \cite[Proposition 20.2]{gfr}, $R_{\e B/K}^{\e(1)}\lbe(G_{K})$ is smooth, connected and unipotent over $K$. The rest of the proof is similar to the last part of the proof of Lemma \ref{trans}.
\end{proof}

\begin{proposition}\label{con} Let $\spp\to S$ be a finite and faithfully flat morphism of locally noetherian schemes and let $G$ be a smooth, commutative and quasi-projective $S$-group scheme with connected fibers. Then $R_{\e S^{\lle\prime}\!/\lbe S}\lbe(G\e)$ and $R_{\e S^{\lle\prime}\!/\lbe S}^{\e(1)}\lbe(G\e)$ are smooth and commutative with connected fibers.	
\end{proposition}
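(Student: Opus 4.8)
The plan is to reduce, by base change to the residue fields of $S$, to the case of a finite algebra over a field, and then to peel off one local factor and invoke Proposition \ref{wrp0}. Smoothness and commutativity are essentially already in hand: $R_{\e S^{\lle\prime}\!/\lbe S}\lbe(G\e)$ is smooth, commutative and quasi-projective with connected fibers by Lemma \ref{bic}, while $R_{\e S^{\lle\prime}\!/\lbe S}^{\e(1)}\lbe(G\e)=\krn[N_{G,\e S^{\lle\prime}\be/ S}]$ is commutative, being a closed subgroup scheme of the commutative group scheme $R_{\e S^{\lle\prime}\!/\lbe S}\lbe(G\e)$, and smooth over $S$ — this was already recorded after the definition of the norm one group scheme, since by Proposition \ref{nsm} it is the pullback of the smooth morphism $N_{G,\e S^{\lle\prime}\be/ S}$ along the unit section of $G$. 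So only the connectedness of the fibers of $R_{\e S^{\lle\prime}\!/\lbe S}^{\e(1)}\lbe(G\e)$ needs proof. For $s\in S$, put $F=k(\lbe s\lbe)$; the compatibility of the Weil restriction functor and of the norm morphism \eqref{tmor} with arbitrary base change identifies the fiber of $R_{\e S^{\lle\prime}\!/\lbe S}^{\e(1)}\lbe(G\e)$ over $s$ with $R_{\e B/F}^{\e(1)}\lbe(G_{\lbe s}\e)$, where $\spec B=\spp\times_{S}\spec F$; here $B$ is a nonzero finite $F$-algebra (faithful flatness of $f$) and $G_{\lbe s}$ is smooth, commutative, connected and quasi-projective over $F$. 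Hence it suffices to prove: for $F$ a field, $B$ a nonzero finite $F$-algebra and $G$ a smooth, commutative, connected, quasi-projective $F$-group scheme, $R_{\e B/F}^{\e(1)}\lbe(G\e)$ is connected.

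To prove this I would write $B\simeq B_{1}\times B^{\lle\prime}$ with $B_{1}$ a local direct factor of $B$ (if $B$ is itself local, $R_{\e B/F}^{\e(1)}\lbe(G\e)$ is connected directly by Proposition \ref{wrp0}, so assume $B^{\lle\prime}\neq 0$). By Proposition \ref{wtf}, $R_{\e B/F}(G_{\be B})\simeq R_{\e B_{1}/F}(G_{\be B_{1}})\times_{F}R_{\e B^{\lle\prime}/F}(G_{\be B^{\lle\prime}})$, and under this isomorphism the norm morphism $N_{G,\e B/F}$ corresponds to $m_{G}\circ(N_{G,\e B_{1}/F}\times N_{G,\e B^{\lle\prime}/F})$, where $m_{G}\colon G\times_{F}G\to G$ is the group law; I would justify this either by citing \cite[XVII, 6.3.15]{sga4} or by checking that the displayed composite has the three properties (functoriality in $G$, compatibility with base change, and the correct composition with the canonical closed immersion of $G$) that characterise the norm, as recalled after \eqref{tmor}. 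Projecting onto the second factor, and using that $N_{G,\e B_{1}/F}$ is faithfully flat (Proposition \ref{nsm}, $G$ being smooth), one obtains an exact sequence of smooth $F$-group schemes
\[
0\to R_{\e B_{1}/F}^{\e(1)}\lbe(G\e)\to R_{\e B/F}^{\e(1)}\lbe(G\e)\to R_{\e B^{\lle\prime}/F}(G_{\be B^{\lle\prime}})\to 0 ,
\]
in which the left-hand term is connected by Proposition \ref{wrp0} and the right-hand term is connected by Lemma \ref{bic} (applied to $\spec B^{\lle\prime}\to\spec F$). Hence $R_{\e B/F}^{\e(1)}\lbe(G\e)$ is connected by \cite[Lemma 2.55]{gfr}, which finishes the argument.

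The only genuinely non-formal step is the identification of $N_{G,\e B/F}$ with $m_{G}\circ(N_{G,\e B_{1}/F}\times N_{G,\e B^{\lle\prime}/F})$ along the product decomposition of Proposition \ref{wtf}: this is precisely what makes the reduction to the local case of Proposition \ref{wrp0} work, and what lets one extract the displayed short exact sequence. Everything else — the reduction to fibers, the passage to a finite algebra over a field, the surjectivity in the short exact sequence (which rests on faithful flatness of the norm morphism), and the concluding extension argument — is routine given Lemma \ref{bic} and Propositions \ref{nsm} and \ref{wrp0}.
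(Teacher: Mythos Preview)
Your proof is correct and follows the paper's overall strategy: use Lemma \ref{bic} for $R_{S'/S}(G_{S'})$, reduce connectedness of $R^{(1)}_{S'/S}(G)$ to the fibers, and over each residue field decompose the fiber algebra into local factors so as to invoke Proposition \ref{wrp0}. The difference is in how that last reduction is carried out. The paper asserts, citing only Proposition \ref{wtf}, an isomorphism $R^{(1)}_{B(s)/k(s)}(G_s)\simeq\prod_{i} R^{(1)}_{B(s)_i/k(s)}(G_s)$; but Proposition \ref{wtf} only decomposes the Weil restriction $R_{B/F}(G_B)$ as a product, and under that decomposition $N_{G,B/F}$ becomes the \emph{product in $G$} of the componentwise norms, so its kernel is not the product of the individual kernels once $m\geq 2$ (already for $B=F\times F$ and $G=\bg_{m,F}$ one has $R^{(1)}_{B/F}(G)\simeq\bg_{m,F}$ while each $R^{(1)}_{B_i/F}(G)$ is trivial). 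Your short exact sequence
\[
0\to R^{(1)}_{B_1/F}(G)\to R^{(1)}_{B/F}(G)\to R_{B'/F}(G_{B'})\to 0,
\]
with surjectivity coming from the faithful flatness of $N_{G,B_1/F}$ (Proposition \ref{nsm}), is the correct substitute: the outer terms are connected by Proposition \ref{wrp0} and Lemma \ref{bic}, and \cite[Lemma 2.55]{gfr} closes the argument. The identification $N_{G,B/F}=m_G\circ(N_{G,B_1/F}\times N_{G,B'/F})$ that you single out as the one non-formal step is exactly what is needed here, and \cite[XVII, Proposition 6.3.15(iii)]{sga4} supplies it. So your route not only works but in fact patches a gap in the paper's own argument.
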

\begin{proof} By Lemma \ref{bic}, we need only consider $R_{\e S^{\lle\prime}\!/\lbe S}^{\e(1)}\lbe(G\e)$. Let $s$ be a point of $S$ with residue field $k(s)$ and write $\spp\times_{S}\spec k(s)=\spec B(s)$. By \cite[Exercise 3, p.~92 and Theorem 8.7, p.~90]{am}, there exists an isomorphism of $k(s)$-algebras $B(s)\simeq \prod_{\le i=1}^{\le m}\be B(s)_{i}$, where each $B(s)_{i}$ is a local and finite $k(s)$-algebra. Thus, by Proposition \ref{wtf}, there exist isomorphisms of $k(s)$-group schemes 
\[
R_{\e S^{\lle\prime}\!/\lbe S}^{\e(1)}\lbe(G\e)_{s}\simeq R_{B(\lbe s)\lbe/ k(s)}^{\e(1)}\lbe(G_{\be\lle s})\simeq \prod_{i=1}^{m}R_{B(\lbe s)_{i}\lbe/ k(s)}^{\e(1)}\lbe(G_{\be\lle s}).
\]	
The proposition is now immediate from Proposition \ref{wrp0} and \cite[${\rm VI_{A}}$, Proposition 2.1.1]{sga3}.
\end{proof}

\begin{definition}\label{adm} The pair $(\e f,G\e)$ is called {\it admissible} if \begin{enumerate}
\item[(i)] $f\colon \spp\to S$ is a finite and faithfully flat morphism of locally noetherian schemes of constant rank $n\geq 2$,
\item[(ii)] $G$ is a smooth, commutative and quasi-projective $S$-group scheme with connected fibers, and
\item[(iii)] for every point $s\in S$ such that ${\rm char}\, k(\lbe s\lbe)$ divides $n$,
\begin{enumerate}
\item[(iii.1)] $G_{k(\lbe s\lbe)}$ is a semiabelian $k(\lbe s\lbe)$-variety, and
\item[(iii.2)] $f_{\lbe s}\colon \spp\times_{S}\spec k(\lbe s\lbe)\to \spec k(\lbe s\lbe)$ is \'etale.
\end{enumerate}
\end{enumerate}
\end{definition}

\begin{proposition}\label{isg2} Assume that $(\e f, G\e)=(\spp\!/S, G\e)$ is admissible (see Definition {\rm \ref{adm}}) and let $n\geq 2$ be the rank of $f$.
If $H=G,\e R_{\e S^{\lle\prime}\be/ S}(G_{\lbe S^{\lle\prime}}\be)$ or $R_{\e S^{\lle\prime}\!/\lbe S}^{\e(1)}\lbe(G\e)$, then $n\colon H\to H$ is faithfully flat and locally of finite presentation.
\end{proposition}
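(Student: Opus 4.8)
The plan is to prove the two asserted properties separately, reducing the only substantial one, flatness, to a statement over the residue fields of $S$ by means of the fibrewise criterion for flatness. Write $H$ for any one of the three group schemes in the statement. In each case $H$ is a smooth, commutative and quasi-projective $S$-group scheme with connected fibres: this is condition (ii) of Definition~\ref{adm} when $H=G$, it is Lemma~\ref{bic} when $H=R_{\e S^{\lle\prime}\be/ S}(G_{\lbe S^{\lle\prime}}\be)$, and for $H=R_{\e S^{\lle\prime}\!/\lbe S}^{\e(1)}\lbe(G\e)$ it follows from Propositions~\ref{nsm} and~\ref{con} (quasi-projectivity being inherited from $R_{\e S^{\lle\prime}\be/ S}(G_{\lbe S^{\lle\prime}}\be)$, of which $R_{\e S^{\lle\prime}\!/\lbe S}^{\e(1)}\lbe(G\e)$ is a closed subscheme, $G$ being separated). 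In particular $H$ is flat and locally of finite presentation over the locally noetherian scheme $S$, so the multiplication-by-$n$ morphism $n_{H}\colon H\to H$, being an $S$-morphism between $S$-schemes locally of finite presentation over $S$, is itself locally of finite presentation. Thus it remains only to show that $n_{H}$ is faithfully flat.

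Since $H$ is flat over $S$, both as the source and as the target of $n_{H}$, and $S$ is locally noetherian, the fibrewise criterion for flatness \cite[${\rm IV}_{3}$, 11.3.10]{ega} reduces the flatness of $n_{H}$ to that of $n_{H_{s}}\colon H_{s}\to H_{s}$ for every $s\in S$, where $H_{s}=H\times_{S}\spec k(\lbe s\lbe)$ is a smooth, connected, commutative $k(\lbe s\lbe)$-group scheme of finite type. I would then distinguish two cases. If ${\rm char}\,k(\lbe s\lbe)$ does not divide $n$, then $n$ is a unit in $k(\lbe s\lbe)$ and the differential of $n_{H_{s}}$ at the unit section, namely multiplication by $n$ on $\mathrm{Lie}(H_{s})$, is an isomorphism; since $H_{s}$ is smooth, $n_{H_{s}}$ is then \'etale at the unit section, hence, by homogeneity under translation, \'etale everywhere. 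Being flat with image a nonempty open subgroup of the connected group scheme $H_{s}$, it is surjective, so $n_{H_{s}}$ is faithfully flat.

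If instead ${\rm char}\,k(\lbe s\lbe)$ divides $n$, then conditions (iii.1) and (iii.2) of Definition~\ref{adm} apply. Writing $\spp\times_{S}\spec k(\lbe s\lbe)=\spec B(s)$, the $k(\lbe s\lbe)$-algebra $B(s)$ is finite (as $f$ is finite), nonzero (as $f$ is faithfully flat) and \'etale (by (iii.2)), while $G_{k(\lbe s\lbe)}$ is semiabelian by (iii.1); and since the formation of $R_{\e S^{\lle\prime}\be/ S}(G_{\lbe S^{\lle\prime}}\be)$ and of $R_{\e S^{\lle\prime}\!/\lbe S}^{\e(1)}\lbe(G\e)$ commutes with the base change $\spec k(\lbe s\lbe)\to S$ (the latter being a kernel), the fibre $H_{s}$ is, respectively, $G_{k(\lbe s\lbe)}$, $R_{B(s)/k(\lbe s\lbe)}(G_{k(\lbe s\lbe)})$ or $R_{B(s)/k(\lbe s\lbe)}^{\e(1)}(G_{k(\lbe s\lbe)})$, which by Proposition~\ref{radm} is in each case a semiabelian variety over $k(\lbe s\lbe)$. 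It therefore suffices to check that $n_{H_{s}}$ is faithfully flat when $H_{s}$ is a semiabelian variety over a field $k$. Choosing an exact sequence $0\to T\to H_{s}\to A\to 0$ with $T$ a torus and $A$ an abelian variety, the snake lemma applied to multiplication by $n$ on this sequence, together with the fact that $n_{T}$ and $n_{A}$ are finite and faithfully flat, hence fppf-epimorphisms with finite kernels, shows that $H_{s}[n]$ is a finite $k$-group scheme and that $n_{H_{s}}$ is an epimorphism of fppf sheaves; factoring $n_{H_{s}}$ as $H_{s}\twoheadrightarrow H_{s}/H_{s}[n]\to H_{s}$ then exhibits it as the composite of a torsor under the finite group $H_{s}[n]$, which is finite and faithfully flat, with a morphism that is at once a monomorphism and an epimorphism of fppf sheaves, hence an isomorphism. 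So $n_{H_{s}}$ is faithfully flat in this case as well.

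Finally, $n_{H_{s}}$ being faithfully flat for every $s\in S$, the fibrewise criterion gives that $n_{H}$ is flat, and $n_{H}$ is surjective because its image contains every fibre $H_{s}$, on which $n_{H_{s}}$ is already surjective; hence $n_{H}$ is faithfully flat, which completes the proof. The step I expect to be the main obstacle is the case ${\rm char}\,k(\lbe s\lbe)\mid n$: one must use admissibility to know that the relevant fibre is semiabelian --- precisely the purpose of conditions (iii.1) and (iii.2), in the absence of which $H_{s}$ could acquire a smooth connected unipotent part on which multiplication by $p={\rm char}\,k(\lbe s\lbe)$ fails to be flat --- and then establish the faithful flatness of multiplication by $n$ on a semiabelian variety, which calls for a modest amount of care with fppf quotients.
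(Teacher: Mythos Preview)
Your proof is correct and follows essentially the same route as the paper's. The paper delegates the core step---that a smooth commutative $S$-group scheme with connected fibres satisfying condition (iii.1) has faithfully flat and locally finitely presented $n$-th power map---to \cite[Proposition 3.3]{ga18}, and then simply checks that $R_{\e S^{\lle\prime}\be/ S}(G_{\lbe S^{\lle\prime}}\be)$ and $R_{\e S^{\lle\prime}\!/\lbe S}^{\e(1)}\lbe(G\e)$ themselves satisfy conditions (ii) and (iii.1), the latter via Proposition~\ref{radm} exactly as you do. Your argument unpacks that external reference inline (fibrewise flatness criterion, the \'etale case when ${\rm char}\,k(s)\nmid n$, and the direct verification for semiabelian varieties), yielding a self-contained proof; the structural pivot---using (iii.2) to identify the fibre of the Weil restriction and then Proposition~\ref{radm} to see it is semiabelian---is identical in both.
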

\begin{proof} It was shown in \cite[Proposition 3.3]{ga18} that conditions (ii) (minus the quasi-projectivity hypothesis) and (iii.1) of Definition \ref{adm} imply the proposition when $H=G$. Thus, to  establish the proposition when $H=R_{\e S^{\lle\prime}\be/ S}(G_{\lbe S^{\lle\prime}}\be)$ or $H=R_{\e S^{\lle\prime}\!/\lbe S}^{\e(1)}\lbe(G\e)$, it suffices to check that
conditions (ii) (minus the quasi-projectivity hypothesis) and (iii.1) of Definition \ref{adm} hold true when $G$ is replaced with $R_{\le S^{\prime}\!/ S}(G_{\lbe S^{\prime}})$ or $R_{\e S^{\lle\prime}\!/\lbe S}^{\e(1)}\lbe(G\e)$. For condition (ii) (minus the quasi-projectivity hypothesis), see Proposition \ref{con}. Now let $s\in S$ be such that ${\rm char}\, k(\lbe s\lbe)$ divides $n$. Then $R_{\e S^{\lle\prime}\!/\lbe S}\lbe(G\e)_{s}\simeq R_{B(\lbe s)\lbe/ k(s)}\lbe(G_{\be\lle s})$ and similarly for $R_{\e S^{\lle\prime}\!/\lbe S}^{\e(1)}\lbe(G\e)$, where $B(s)$ is a finite and \'etale $k(s)$-algebra by Definition \ref{adm}(iii.2). Since $G_{\be\lle s}$ is a semiabelian $k(\lbe s\lbe)$-variety by Definition \ref{adm}(iii.1), $R_{B(\lbe s)\lbe/ k(s)}\lbe(G_{\be\lle s})$ and $R_{B(\lbe s)\lbe/ k(s)}^{\e(1)}\lbe(G_{\be\lle s})$
are semiabelian $k(\lbe s\lbe)$-varieties by Proposition \ref{radm}, i.e., $R_{\le S^{\prime}\!/ S}(G_{\lbe S^{\prime}})$ and $R_{\e S^{\lle\prime}\!/\lbe S}^{\e(1)}\lbe(G\e)$ satisfy Definition \ref{adm}(iii.1). The proof is now complete.
\end{proof}

\section{Proof of the main theorem}\label{mid}

Let $f\colon \spp\to S$ be a finite and faithfully flat morphism of locally noetherian schemes of constant rank $n\geq 2$ and let $G$ be a commutative and quasi-projective $S$-group scheme.

The Cartan-Leray spectral sequence associated to $(\e f, G\e)$ relative to the $\tau$ topology
\[
H^{\le r}\be(S_{\tau},R^{\e s}\be f_{\lbe *}\lbe(G_{\be S^{\lle\prime}}\be))\implies H^{\e r+s}\lbe(\spp_{\tau},G\e)
\]
induces edge morphisms $e_{\tau}^{\le (r)}\colon H^{\le r}\be(S_{\tau},R_{\e S^{\lle\prime}\be/ S}(G_{\lbe S^{\lle\prime}}\be)\e)\to H^{\le r}\be(\spp_{\tau},G\e)$ for every $r\geq 0$ \cite[Proposition 2.3.1, p.~14]{t}. The map $e_{\tau}^{\le (0)}$ is the isomorphism \eqref{wr} for $X^{\prime}=G_{\lbe S^{\lle\prime}}$. Further, by \cite[Theorem 6.4.2(ii), p.~128]{t}, the maps $e_{\et}^{\lle (r)}$ are isomorphisms for every $r\geq 0$. Note, however, that the maps $e_{\fl}^{\lle (r)}$ are not isomorphisms in general \cite[XXIV, Remarks 8.5]{sga3}.

Now consider
\[
j^{(r)}_{G,\e \tau}=H^{r}\be(S_{\tau}\lbe,\e j_{\e G,\e S^{\lle\prime}\be/ S}\e)\colon H^{\le r}\be(S_{\tau},G\e)\to H^{\le r}\be(S_{\tau},R_{\e S^{\lle\prime}\be/ S}(G_{\lbe S^{\lle\prime}}\be)\e),
\]
where $j_{\e G,\e S^{\lle\prime}\be/ S}$ is the map \eqref{imor} associated to $X=G$. The composition 
\[
H^{\e r}\lbe(S_{\et},G\e)\overset{\! j^{(r)}_{G,\lle \et}}{\lra} H^{\e r}\lbe(S_{\et},R_{\e S^{\lle\prime}\be/ S}(G_{\lbe S^{\lle\prime}}\be)\e)\underset{\sim}{\overset{\!e_{\et}^{(r)}}{\lra}} H^{\e r}\lbe(\spp_{\et},G\e)
\]
is the natural $r$-th restriction map in \'etale cohomology
\begin{equation}\label{res}
{\rm Res}_{\e G}^{(r)}\colon H^{\le r}\lbe(S_{\et},G\e)\to H^{\le r}\lbe(\spp_{\et},G\e).
\end{equation}

We now consider
\[
N_{G,\e\tau}^{(\e r\le)}=H^{\le r}(S_{\tau},N_{G,\e S^{\lle\prime}\be/ S})\colon H^{\le r}\be(S_{\tau},R_{\e S^{\lle\prime}\be/ S}(G_{\lbe S^{\lle\prime}}\be)\e)\to H^{\e r}\lbe(S_{\tau},G\e),
\]
where $N_{G,\e S^{\lle\prime}\be/ S}$ is the norm morphism \eqref{tmor}.
By \cite[XVII, Example 6.3.18]{sga4}, $N_{G,\e\tau}^{(\e 0\le)}$ can be identified with a map
\begin{equation}\label{n0}
N_{S^{\lle\prime}\!\lbe/S}\colon G(\spp\le)\to G(S\e)
\end{equation}
that agrees with the usual norm map if $G=\bg_{m,\le S}$. The composition
\begin{equation}\label{cores}
{\rm Cores}_{\e G}^{(r)}\colon H^{\e r}\lbe(\spp_{\et},G\e)\underset{\!\sim}{\overset{\!(e_{\et}^{\le (r)})^{-1}}{\lra}}H^{\e r}\lbe(S_{\et},R_{\e S^{\lle\prime}\be/ S}(G_{\lbe S^{\lle\prime}}\be)\e)\overset{\!N_{G,\le\et}^{(\e r\le)}}{\lra}H^{\le r}\lbe(S_{\et},G\e)
\end{equation}
is the $r$-th corestriction map in \'etale cohomology. For $r=0$ the maps \eqref{n0} and \eqref{cores} will be identified, i.e.,
\begin{equation}\label{ide}
{\rm Cores}_{\e G}^{(0)}=N_{S^{\lle\prime}\!\lbe/S}.
\end{equation}
Further, the maps ${\rm Cores}_{\e \bg_{m,S}}^{(i)}$ for $i=1$ and $2$ will both be denoted by ${\rm Cores}_{S^{\lle\prime}\!/S}$ since no ambiguity will result from this choice of notation. If $n\geq 1$ is an integer, we will write ${\rm Res}_{\e G,\e n}^{(r)}$ and ${\rm Cores}_{\e G,\e n}^{(r)}$ for $({\rm Res}_{\e G}^{(r)}\le)_{n}$ and $({\rm Cores}_{\e G}^{(r)}\le)_{n}$, respectively. The following equalities hold for all $r\geq 0$ and all $n\geq 1$:
\begin{eqnarray}
\krn\e j^{(r)}_{G,\e \et}&=&\krn\e {\rm Res}_{\e G}^{(r)}\label{note1}\\
\krn\big(\e j^{(r)}_{G,\e \et}/n\big)&=&\krn\big({\rm Res}_{\e G}^{(r)}/n)\label{note2}\\
\cok\big((N_{G,\e\et}^{(\e r\le)}\le)_{n}\big)&=&\cok\e{\rm Cores}_{\e G,\e n}^{(r)}\label{note4}\\
\cok\big((\e j^{(r)}_{G,\e \et}\le)_{n}\big)&=&\cok\e{\rm Res}_{\e G,\e n}^{(r)}\label{note5}\\
\krn\le (N_{G,\e\et}^{(\e r\le)}\le)_{n}&=&\krn\e{\rm Cores}_{\e G,\e n}^{(r)}.\label{note6}
\end{eqnarray}

Next, by \eqref{nis}, the composition
\begin{equation}\label{nce}
H^{\e r}\lbe(S_{\tau},G\e)\overset{\! j^{(r)}_{G,\lle \tau}}{\lra} H^{\e r}\lbe(S_{\tau},R_{\e S^{\lle\prime}\be/ S}(G_{\lbe S^{\lle\prime}}\be)\e)\overset{\!N_{G,\e\tau}^{(\e r\le)}}{\lra} H^{\e r}\lbe(S_{\tau},G\e)
\end{equation}
is the multiplication by $n$ map on $H^{\le r}\lbe(S_{\tau},G\e)$. Thus $\krn\e j_{\le G,\e\tau}^{\le(r)}$ and $\cok\e N_{G,\e\tau}^{(\e r\le)}$ are $n$-torsion abelian groups and \eqref{co1} yields
\begin{equation}\label{ntor}
\krn\lle (\e j_{ G,\e\tau}^{\le(r)})_{n}=\krn\e j_{\le G,\e\tau}^{\le(r)}
\end{equation}
and 
\begin{equation}\label{cotor}
\cok\le(N_{G,\e\tau}^{(\e r\le)}/n)=\cok N_{G,\e\tau}^{(\e r\le)}.
\end{equation}
Now observe that \eqref{nce} induces three complexes (in degrees $0,1$ and $2$) of $n$-torsion abelian groups, namely
\begin{equation}\label{comp1}
C^{\e\bullet}_{\be/n}\be(\lle r,\lle G\e)=\big(H^{\le r}\lbe(S_{\et},G\e)/n\overset{\!j^{(r)}_{G,\lle \et}\e/n}{\lra} H^{\e r}\lbe(S_{\et},R_{\e S^{\lle\prime}\be/ S}(G_{\lbe S^{\lle\prime}}\be)\e)/n\overset{\!N_{G,\e\et}^{(\e r\le)}/n}{\lra}  H^{\le r}\lbe(S_{\et},G\e)/n\le\big) ,
\end{equation}
\begin{equation}\label{comp2}
C^{\e\bullet}\lbe(\lle r,\lle G_{\lbe n})=\big(H^{\le r}\lbe(S_{\fl},G_{\lbe n}\e)\overset{\!j_{G_{\lbe n}\lbe,\lle\fl}^{(r)}}{\lra} H^{\e r}\lbe(S_{\fl},R_{\e S^{\lle\prime}\be/ S}(G_{\lbe n,\e S^{\lle\prime}}\be)\e)\overset{\!N_{G_{\lbe n}\lbe,\lle\fl}^{(\e r\le)}}{\lra}  H^{\le r}\lbe(S_{\fl},G_{\lbe n}\e)\big),
\end{equation}
and
\begin{equation}\label{comp3}
C^{\e\bullet}_{\be n}\be(\lle r,\lle G\e)=\big(H^{\le r}\lbe(S_{\et},G\e)_{n}\overset{\!\big(\e j_{ G,\e\et}^{\le(r)}\big)_{\! n}}{\lra} H^{\e r}\lbe(S_{\et},R_{\e S^{\lle\prime}\be/ S}(G_{\lbe S^{\lle\prime}}\be)\e)_{n}\overset{\!\big( N_{G,\e\et}^{(\e r\le)}\big)_{\be n}}{\lra}  H^{\le r}\lbe(S_{\et},G\e)_{n}\big).
\end{equation}
Using \eqref{note1}-\eqref{note4} and \eqref{ntor}-\eqref{cotor}, we have 
\begin{eqnarray}
H^{\le 0}\lbe(C^{\e\bullet}_{\be/n}\be(\lle r,\lle G\e))&=&\krn\big({\rm Res}_{\e G}^{(r)}/n)\label{ein}\\
H^{\le 0}\lbe(C^{\e\bullet}\be(\lle r,\lle G_{\lbe n}))&=&\krn\e j_{G_{\lbe n}\lbe,\lle\fl}^{(r)}\label{zwei}\\
H^{\le 0}\lbe(C^{\e\bullet}_{\be n}\be(\lle r,\lle G\e))&=&\krn\e {\rm Res}_{\e G}^{(r)}\label{drei}\\
H^{\le 2}\lbe(C^{\e\bullet}_{\be/n}\be(\lle r,\lle G\e))&=&\cok\e{\rm Cores}_{\e G}^{(r)}\label{ein2}\\
H^{\le 2}\lbe(C^{\e\bullet}\be(\lle r,\lle G_{\lbe n}))&=&\cok\e N_{G_{\lbe n}\lbe,\lle\fl}^{(\e r\le)}\label{zwei2}\\
H^{\le 2}\lbe(C^{\e\bullet}_{\be n}\be(\lle r,\lle G\e))&=&\cok\e{\rm Cores}_{\e G,\e n}^{(r)}.\label{drei2}
\end{eqnarray}
We now define
\begin{equation}\label{psiN}
\Psi_{\be N}(n,G\e)=\{x\in G(\spp\le)\colon N_{S^{\lle\prime}\!\lbe/S}(x)\in G(S\e)^{n}\},
\end{equation}
where $N_{S^{\lle\prime}\!\lbe/S}$ is the map \eqref{n0}. Then $\Psi_{\be N}(\e n,G\e)$ is a subgroup of $ G(\spp\le)$ which contains $G(S\e)\e G(\spp)^{n}$ and we have
\begin{equation}\label{four}
H^{\le 1}\lbe(C^{\e\bullet}_{\be/n}\be(\lle 0,\lle G\e))=\frac{\Psi_{\be N}( n,G\e)}{G(S\e)\e G(\spp\le)^{n}}.
\end{equation}

\smallskip

We now assume that $(\e f,G\e)$ is an admissible pair (see Definition \ref{adm}). 

\smallskip

Since $(\e f,G\e)$ is admissible, Proposition \ref{isg2} yields an exact and commutative diagram in $\sfs$
\begin{equation}\label{xcool}
\xymatrix{0\ar[r]& R_{\e S^{\lle\prime}\be/ S}^{\e(1)}(G_{\lbe n})\ar@{^{(}->}[d]^{a}\ar[r]& R_{\e S^{\lle\prime}\be/ S}^{\e(1)}(G\e)\ar@{^{(}->}[d]\ar[r]^{n}& R_{\e S^{\lle\prime}\be/ S}^{\e(1)}(G\e)\ar@{^{(}->}[d]\ar[r]&0\\
0\ar[r]&\ar[d]^{N_{G_{n}\lbe,\le S^{\lle\prime}\!/ S}} R_{\e S^{\lle\prime}\be/ S}(G_{\lbe n,\e S^{\lle\prime}}\be)\ar[r]& \ar@{->>}[d]^{N_{G\lbe,\le S^{\lle\prime}\!/ S}} R_{\e S^{\lle\prime}\be/ S}(G_{\lbe S^{\lle\prime}}\be)\ar[r]^{n}&\ar@{->>}[d]^{N_{G\lbe,\le S^{\lle\prime}\!/ S}} R_{\e S^{\lle\prime}\be/ S}(G_{\lbe S^{\lle\prime}}\be)\ar[r]&0\\
0\ar[r]&G_{\lbe n}\ar[r]& G\ar[r]^{n}&G\ar[r]&0,
}
\end{equation}
where $a$ is the inclusion morphism. A diagram chase (or an application of the snake lemma to the bottom half of the above diagram) shows that $N_{G_{n}\lbe,\le S^{\prime}\!/ S}$ is surjective, whence
\begin{equation}\label{blh}
0\to R_{\e S^{\lle\prime}\be/ S}^{\e(1)}(G_{\lbe n})\overset{a}{\lra} R_{\e S^{\lle\prime}\be/ S}(G_{\lbe n,\e S^{\lle\prime}}\be)\overset{N_{\lbe G_{n}\lbe,\lle S^{\prime}\!\lbe/\be S}}{\lra} G_{\lbe n}\to 0
\end{equation}
is an exact sequence in $\sfs$. On the other hand, by \eqref{nis} applied to $G_{\lbe n}$,  
\[
G_{\lbe n}\e\overset{j_{\le G_{\lbe n}\lbe,\e S^{\lle\prime}\!/ S}}{\hookrightarrow } R_{\e S^{\lle\prime}\be/ S}(G_{\lbe n,\e S^{\lle\prime}}\be)\overset{\!N_{G_{n}\lbe,\le S^{\lle\prime}\!/ S}}{\lra} G_{\lbe n}
\]
is a complex of commutative and quasi-projective $S$-group schemes. Thus there exists a closed immersion of commutative $S$-group schemes
\begin{equation}\label{bs}
b\colon G_{\lbe n}\to R_{\e S^{\lle\prime}\be/ S}^{\e(1)}(G_{\lbe n}).
\end{equation}
We will write
\begin{equation}\label{gen}
G(n)=R_{\e S^{\lle\prime}\be/ S}^{\e(1)}(G_{\lbe n})/G_{\lbe n}
\end{equation}
for the cokernel of $b$ in $\sfs$. Note that $G(n)$ is represented by an $S$-group scheme if $G_{\lbe n}$ is flat over $S$ and $\dim S\leq 1$ \cite[Theorem 4.C, p.~53]{an}. 
Now, by definition of $G(n)$, there exists a canonical exact sequence in $\sfs$
\begin{equation}\label{lhb}
0\to G_{\lbe n}\overset{b}{\lra} R_{\e S^{\lle\prime}\be/ S}^{\e(1)}(G_{\lbe n})\overset{q}{\lra} G(n)\to 0,
\end{equation}
where $q$ is the projection and $b$ is the inclusion map. Note that $a\circ b=j_{\le G_{n}\lbe,\e S^{\lle\prime}\!/ S}$. Now, for every $r\geq 0$,
consider
\begin{eqnarray*}
a^{(r)}&=& H^{\le r}\be(S_{\fl},a\le)\colon H^{\le r}\be(S_{\fl},R_{\e S^{\lle\prime}\be/ S}^{\e(1)}(G_{\lbe n}))\to H^{\le r}\be(S_{\fl},R_{\e S^{\lle\prime}\be/ S}(G_{\lbe n,\e S^{\lle\prime}}\be)),\\
b^{(r)}&=&H^{\le r}\be(S_{\fl},b\le)\colon H^{\le r}\be(S_{\fl},G_{\lbe n})\to 
H^{\le r}\be(S_{\fl},R_{\e S^{\lle\prime}\be/ S}^{\e(1)}(G_{\lbe n})),\\
q^{(r)}&=& H^{\le r}\be(S_{\fl},q\le)\colon H^{\le r}\be(S_{\fl},R_{\e S^{\lle\prime}\be/ S}^{\e(1)}(G_{\lbe n}))\to H^{\le r}\be(S_{\fl},G(n)).
\end{eqnarray*}
Then the sequences \eqref{blh} and \eqref{lhb} induce exact sequences of abelian groups
\begin{equation}\label{pin}
\dots\to H^{\le r-1}\be(S_{\fl},R_{\e S^{\lle\prime}\be/ S}(G_{\lbe n,\e S^{\lle\prime}}\be))\overset{N_{\lbe G_{\lbe n}\lbe,\lle\fl}^{(r-1)}}{\lra}
H^{\le r-1}\lbe(S_{\fl},G_{\lbe n})\overset{\delta^{\le(r-1)}}{\lra} H^{\le r}\be(S_{\fl},R_{\e S^{\lle\prime}\be/ S}^{\e(1)}(G_{\lbe n}))\overset{a^{(r)}}{\lra}\dots 
\end{equation}
and 
\begin{equation}\label{nip}
\begin{array}{rcl}
\dots\to H^{\le r}\be(S_{\fl},G_{\lbe n}\be)\overset{b^{(r)}}{\lra}
H^{\le r}\be(S_{\fl},R_{\e S^{\lle\prime}\be/ S}^{\e(1)}(G_{\lbe n}))&\overset{q^{(r)}}{\lra}& H^{\le r}\be(S_{\fl},G(n))\\
&\overset{\partial^{\e(r)}}{\lra}&H^{\le r+1}\lbe(S_{\fl},G_{\lbe n})\to\dots,
\end{array}
\end{equation}
where the maps $\delta^{\e(r)}\!$ and $\partial^{\e(r)}$ are connecting morphisms induced by \eqref{blh} and \eqref{lhb}, respectively. Note that, since $a^{(r)}\circ b^{(r)}=j_{G_{\lbe n}\lbe,\lle\fl}^{(r)}$ and $\krn\e q^{(r)}=\img\e b^{(r)}$, we have
\begin{equation}\label{surj2}
a^{(r)}(\e\krn\e q^{(r)})=\img j_{G_{\lbe n}\lbe,\lle\fl}^{(r)}.
\end{equation}
Now, for every $r\geq 1$, let
\begin{equation}\label{ar}
\gamma^{(r)}\colon H^{\le r-1}\lbe(S_{\fl},G_{\lbe n})\to  H^{\le r}\be(S_{\fl},G(n))
\end{equation}
be the composition
\[
H^{\le r-1}\be(S_{\fl},G_{\lbe n})\overset{\delta^{\e(r-1)}}{\lra} H^{\le r}\be(S_{\fl},R_{\e S^{\lle\prime}\be/ S}^{\e(1)}(G_{\lbe n}))\overset{q^{(r)}}{\lra} H^{\le r}\be(S_{\fl},G(n)).
\]
By the exactness of \eqref{pin} and \eqref{nip}, there exists a canonical isomorphism of abelian groups
\begin{equation}\label{tri}
\krn\e a^{(r)}\be/\, \krn\e a^{(r)}\cap \krn\e q^{(r)}\isoto q^{(r)}(\krn\e a^{(r)})=q^{(r)}(\img\e \delta^{\e(r)})=\img\e \gamma^{\e (r)}.
\end{equation}
Next consider the complex
\begin{equation}\label{gam}
\varGamma^{\e\bullet}_{\!n}\lbe(r, G\e)=(H^{\le r-1}\be(S_{\fl},G_{\lbe n})\overset{\gamma^{(r)}}{\lra}H^{\le r}\be(S_{\fl},G(n))\overset{\partial^{\e (r)}}{\lra}H^{\le r+1}\be(S_{\fl}, G_{\lbe n})).
\end{equation}

\begin{lemma}\label{ok} For every $r\geq 1$, there exists a canonical isomorphism of $n$-torsion abelian groups
\[
H^{\le 1}\lbe (C^{\e\bullet}\lbe(\lle r,\lle G_{\lbe n}))\simeq H^{\le 1}\be (\varGamma^{\e\bullet}_{\!n}\lbe(r, G\e)),
\]
where the complexes $C^{\e\bullet}\be(\lle r,\lle G_{\lbe n})$ and $\varGamma^{\e\bullet}_{\!n}\lbe(r, G\e)$ are given by \eqref{comp2} and \eqref{gam}, respectively. 
\end{lemma}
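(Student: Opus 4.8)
The plan is to glue together the two long exact sequences \eqref{pin} and \eqref{nip}, which share the middle term $H^{\le r}\be(S_{\fl},R_{\e S^{\lle\prime}\be/ S}^{\e(1)}(G_{\lbe n}))$, and to read off from them explicit descriptions of the two cohomology groups in the statement. First, by definition \eqref{comp2} and since $N_{G_{\lbe n}\lbe,\lle\fl}^{(\e r\le)}\circ j_{G_{\lbe n}\lbe,\lle\fl}^{(r)}$ is multiplication by $n$ on the $n$-torsion group $H^{\le r}\be(S_{\fl},G_{\lbe n})$, we have $H^{\le 1}\lbe (C^{\e\bullet}\lbe(\lle r,\lle G_{\lbe n}))=\krn N_{G_{\lbe n}\lbe,\lle\fl}^{(\e r\le)}/\img j_{G_{\lbe n}\lbe,\lle\fl}^{(r)}$, and by the exactness of \eqref{pin} together with \eqref{surj2} this equals $\img a^{(r)}/a^{(r)}(\krn q^{(r)})$. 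On the other hand $\partial^{\e(r)}\circ\gamma^{(r)}=\partial^{\e(r)}\circ q^{(r)}\circ\delta^{\e(r-1)}=0$ by the exactness of \eqref{nip}, so $H^{\le 1}\be (\varGamma^{\e\bullet}_{\!n}\lbe(r, G\e))=\krn\partial^{\e(r)}/\img\gamma^{(r)}$, and by \eqref{nip}, \eqref{ar} and the exactness of \eqref{pin} (which gives $\img\delta^{\e(r-1)}=\krn a^{(r)}$) this equals $\img q^{(r)}/q^{(r)}(\krn a^{(r)})$.

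Next I would define the comparison map directly. Given $x\in\krn N_{G_{\lbe n}\lbe,\lle\fl}^{(\e r\le)}=\img a^{(r)}$, I would choose any $\widetilde x$ with $a^{(r)}(\widetilde x)=x$ and send $x$ to the class of $q^{(r)}(\widetilde x)$ in $\krn\partial^{\e(r)}/\img\gamma^{(r)}$, which makes sense since $q^{(r)}(\widetilde x)\in\img q^{(r)}=\krn\partial^{\e(r)}$. This is independent of the lift: two lifts of $x$ differ by an element of $\krn a^{(r)}=\img\delta^{\e(r-1)}$, whose $q^{(r)}$-image lies in $q^{(r)}(\krn a^{(r)})=\img\gamma^{(r)}$. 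Using $a\circ b=j_{\le G_{n}\lbe,\e S^{\lle\prime}\!/ S}$ (hence $a^{(r)}\circ b^{(r)}=j_{G_{\lbe n}\lbe,\lle\fl}^{(r)}$) and $q^{(r)}\circ b^{(r)}=0$, a class $x=j_{G_{\lbe n}\lbe,\lle\fl}^{(r)}(y)$ admits the lift $\widetilde x=b^{(r)}(y)$, which maps to $0$; so the assignment descends to a \emph{canonical} homomorphism
\[
\overline{\phi}\colon H^{\le 1}\lbe (C^{\e\bullet}\lbe(\lle r,\lle G_{\lbe n}))\to H^{\le 1}\be (\varGamma^{\e\bullet}_{\!n}\lbe(r, G\e)).
\]

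To finish I would check bijectivity of $\overline{\phi}$ purely from \eqref{pin} and \eqref{nip}. Injectivity: if $q^{(r)}(\widetilde x)\in\img\gamma^{(r)}=q^{(r)}(\krn a^{(r)})$, then after subtracting a suitable element of $\krn a^{(r)}$ from $\widetilde x$ one may assume $q^{(r)}(\widetilde x)=0$, i.e.\ $\widetilde x\in\krn q^{(r)}=\img b^{(r)}$; writing $\widetilde x=b^{(r)}(y)$ gives $x=a^{(r)}(\widetilde x)=j_{G_{\lbe n}\lbe,\lle\fl}^{(r)}(y)\in\img j_{G_{\lbe n}\lbe,\lle\fl}^{(r)}$, so $x=0$ in $H^{\le 1}\lbe (C^{\e\bullet}\lbe(\lle r,\lle G_{\lbe n}))$. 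Surjectivity: any $w\in\krn\partial^{\e(r)}=\img q^{(r)}$ equals $q^{(r)}(\widetilde x)$ for some $\widetilde x$, and then setting $x=a^{(r)}(\widetilde x)\in\krn N_{G_{\lbe n}\lbe,\lle\fl}^{(\e r\le)}$ we get $\overline{\phi}(x)=[w]$. Both groups are $n$-torsion because $R_{\e S^{\lle\prime}\be/ S}(G_{\lbe n,\e S^{\lle\prime}}\be)=R_{\e S^{\lle\prime}\be/ S}(G_{\lbe S^{\lle\prime}}\be)_{n}$ is killed by $n$. The whole argument is formal; the one point requiring care is the well-definedness of $\overline{\phi}$, which is precisely where the common term $H^{\le r}\be(S_{\fl},R_{\e S^{\lle\prime}\be/ S}^{\e(1)}(G_{\lbe n}))$, the relation $\krn a^{(r)}=\img\delta^{\e(r-1)}$, and the identity $a\circ b=j_{\le G_{n}\lbe,\e S^{\lle\prime}\!/ S}$ are brought to bear. (Equivalently, both quotients above are canonically isomorphic to $H^{\le r}\be(S_{\fl},R_{\e S^{\lle\prime}\be/ S}^{\e(1)}(G_{\lbe n}))/(\krn a^{(r)}+\krn q^{(r)})$ via $a^{(r)}$ and $q^{(r)}$ respectively, and $\overline{\phi}$ is the resulting composite.)
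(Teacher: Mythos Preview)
Your proof is correct and follows essentially the same route as the paper: both arguments use the long exact sequences \eqref{pin} and \eqref{nip} together with the relation $a^{(r)}\circ b^{(r)}=j_{G_{\lbe n}\lbe,\lle\fl}^{(r)}$ to identify each of the two cohomology groups with the common quotient $H^{\le r}\be(S_{\fl},R_{\e S^{\lle\prime}\be/ S}^{\e(1)}(G_{\lbe n}))/(\krn a^{(r)}+\krn q^{(r)})$. The paper packages this as a short exact sequence obtained from a $3\times 3$ diagram, whereas you build the map $\overline{\phi}$ explicitly by lifting along $a^{(r)}$ and pushing along $q^{(r)}$; the content is the same.
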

\begin{proof} By the exactness of \eqref{nip} and the identity \eqref{surj2}, the following diagram is exact and commutative
\[
\xymatrix{0\ar[r]&\ar@{^{(}->}[d]\krn\e a^{(r)}\cap \krn\e q^{(r)}\ar[r]&\krn\e q^{(r)}\ar@{^{(}->}[d]\ar[r]^(.45){a^{(r)}}&\img j_{G_{\lbe n}\lbe,\lle\fl}^{(r)}\ar@{^{(}->}[d]\ar[r]&0\\
0\ar[r]&\krn\e a^{(r)}\ar[r]&H^{\le r}\be(S_{\fl},R_{ S^{\lle\prime}\be/ S}^{\e(1)}(G_{\lbe n}))\ar[r]^(.6){a^{(r)}}&\krn\e N_{\be G_{\lbe n}\lbe,\lle\fl}^{(r)}\ar[r]&0.
}
\]
The above diagram induces the top row of the following commutative diagram of $n$-torsion abelian groups with exact rows
\[
\xymatrix{0\ar[r]&\ar[d]^{\sim}\frac{\krn\e a^{(r)}}{\krn\e a^{(r)}\e\cap\e \krn\e q^{(r)}}\ar[r]&\frac{H^{\le r}\be(S_{\le\fl},\e R_{S^{\lle\prime}\be/ S}^{\e(1)}(G_{\lbe n}))}{\krn\e q^{(r)}}\ar[d]^{\sim}\ar[r]& H^{\le 1}\lbe (C^{\e\bullet}\lbe(\lle r,\lle G_{\lbe n}))\ar@{=}[d]\ar[r]&0\\
0\ar[r]& \ar@{=}[d]q^{(r)}(\krn\e a^{(r)})\ar[r]&\ar@{=}[d]\img\e q^{\e (r)}\ar[r]& H^{\le 1}\lbe (C^{\e\bullet}\lbe(\lle r,\lle G_{\lbe n}))\ar@{=}[d]\ar[r]&0\\
0\ar[r]& \img\e \gamma^{(r)}\ar[r]&\krn\e\partial^{\e (r)}\ar[r]& H^{\le 1}\lbe (C^{\e\bullet}\lbe(\lle r,\lle G_{\lbe n}))\ar[r]&0,
}
\]
where the left-hand vertical composition is the isomorphism \eqref{tri} and the top middle vertical isomorphism comes from \eqref{nip}. Thus we obtain a canonical isomorphism $\krn\e\partial^{\e (r)}\be/\, \img\e \gamma^{(r)}=H^{\le 1}\lbe (\varGamma^{\e\bullet}_{\!n}\lbe(r, G\e))\isoto H^{\le 1}\lbe (C^{\e\bullet}\lbe(\lle r,\lle G_{\lbe n}))$, as claimed.
\end{proof}

Next, the middle and bottom rows of diagram \eqref{xcool} induce short exact sequences of abelian groups for every integer $r\geq 1$
\[
H^{\le r-1}\lbe(S_{\et},R_{\e S^{\lle\prime}\be/ S}(G_{\lbe S^{\lle\prime}}\be)\e)/n\hookrightarrow H^{\le r}(S_{\fl},R_{\e S^{\lle\prime}\be/ S}(G_{\lbe n,\e S^{\lle\prime}}\be)\le)\twoheadrightarrow  H^{\le r}\lbe(S_{\et},R_{\e S^{\lle\prime}\be/ S}(G_{\lbe S^{\lle\prime}}\be)\e)_{n}
\]
and
\[
0\to H^{\le r-1}\lbe(S_{\et},G\e)/n\to H^{\le r}(S_{\fl}, G_{\lbe n}\le)\to  H^{\le r}\lbe(S_{\et},G\e)_{n}\to 0.
\]
The preceding sequences fit into the following exact and commutative diagram of abelian groups
\[
\xymatrix{H^{\le r-1}\lbe(S_{\et},G\e)/n\,\ar[d]^{j_{ G,\lle\et}^{(r-1)}/n}\ar@{^{(}->}[r]& H^{\le r}(S_{\fl}, G_{\lbe n}\le)\ar@{->>}[r]\ar[d]^{j_{\le G_{n}\lbe,\le\fl}^{(r)}}&H^{\le r}\lbe(S_{\et},G\e)_{n}\ar[d]^{\big(\e j_{\lbe G,\e\et}^{(r)}\big)_{\be n}}\\
H^{\le r-1}\lbe(S_{\et},R_{\e S^{\lle\prime}\be/ S}(G_{\lbe S^{\lle\prime}}\be)\e)/n\,\ar@{^{(}->}[r]\ar[d]^{N_{\lbe G,\lle\et}^{(r-1)}/n}& H^{\le r}(S_{\fl},R_{\e S^{\lle\prime}\be/ S}(G_{\lbe n,\e S^{\lle\prime}}\be)\le)\ar[d]^{N_{\lbe G_{\lbe n}\lbe,\le\fl}^{(r)}}\ar@{->>}[r]& H^{\le r}\lbe(S_{\et},R_{\e S^{\lle\prime}\be/ S}(G_{\lbe S^{\lle\prime}}\be)\e)_{n}\ar[d]^{\big(N_{\le G\lbe,\le\et}^{(r)}\big)_{\be n}}\\
H^{\le r-1}\lbe(S_{\et},G\e)/n\,\ar@{^{(}->}[r]& H^{\le r}(S_{\fl}, G_{\lbe n}\le)\ar@{->>}[r]& H^{\le r}\lbe(S_{\et},G\e)_{n},
}
\]
i.e., there exists a canonical exact sequence of complexes of abelian groups
\begin{equation}\label{nice}
0\to C^{\e\bullet}_{\be/n}\be(\lle r-1,\lle G\e)\to C^{\e\bullet}\be(\lle r,\lle G_{\lbe n})\to C^{\e\bullet}_{\be n}\be(\lle r,\lle G\e)\to 0,
\end{equation}
where the left-hand, middle and right-hand complexes are given by \eqref{comp1}, 
\eqref{comp2} and \eqref{comp3}, respectively. We can now state the main theorem of the paper.

\begin{theorem}\label{main} Assume that the pair $(\e f, G\e)$ is admissible (see Definition {\rm \ref{adm}}) and let $n\geq 2$ be the rank of $f$. For every integer $r\geq 1$, there exists a canonical exact sequence of $n$-torsion abelian groups
\[
\begin{array}{rcl}
0&\to&\krn\lbe\big({\rm Res}_{\e G}^{(r-1)}\!/n)\to \krn\e j_{\le G_{\lbe n}\lbe,\le\fl}^{(r)}\to \krn\e {\rm Res}_{\lle G}^{(r)}\\
&\to& H^{\le 1}\lbe ( C^{\e\bullet}_{\be/n}\be(\lle r-1,\lle G\e))\to H^{\le 1}\lbe (\varGamma^{\e\bullet}_{\!n}\lbe(r, G\e))\to H^{\le 1}\lbe (C^{\e\bullet}_{\lbe n}\be(\lle r,\lle G\e))\\
&\to& \cok\e{\rm Cores}_{\e G}^{(r-1)}\to \cok\e N_{\be G_{n}\lbe,\le\fl}^{\lle(r)}\to \cok\e{\rm Cores}_{\e G,\e n}^{(r)}\to 0,
\end{array}
\]
where the complexes $C^{\e\bullet}_{\be/n}\be(\lle r-1,\lle G\e)$, $\varGamma^{\e\bullet}_{\!n}\lbe(r, G\e)$ and $C^{\e\bullet}_{\lbe n}\be(\lle r,\lle G\e)$ are given by \eqref{comp1}, \eqref{gam} and \eqref{comp3}, respectively.
\end{theorem}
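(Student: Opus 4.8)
The plan is to read off the asserted exact sequence as the long exact cohomology sequence of the short exact sequence of complexes \eqref{nice}, and then to translate each of its nine terms using the identifications established above.

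First I would record that each of $C^{\e\bullet}_{\be/n}\be(\lle r-1,\lle G\e)$, $C^{\e\bullet}\be(\lle r,\lle G_{\lbe n})$ and $C^{\e\bullet}_{\lbe n}\be(\lle r,\lle G\e)$ is indeed a complex concentrated in degrees $0,1,2$: by \eqref{nce} the composition of its two consecutive maps is a multiplication-by-$n$ map on a group that is killed by $n$ (being of the form $A/n$ or $A_{n}$, or the flat cohomology of the $n$-torsion sheaf $G_{\lbe n}$, resp.\ $R_{\e S^{\lle\prime}\be/ S}(G_{\lbe n,\e S^{\lle\prime}}\be)$), hence zero. For the same reason all three complexes, and therefore all of their cohomology groups, consist of $n$-torsion abelian groups; and \eqref{nice} is a short exact sequence of such complexes.

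Next I would apply the standard long exact sequence in cohomology attached to a short exact sequence of bounded complexes (obtained by applying the snake lemma to the diagram formed degreewise by the given short exact sequence of complexes). Since all three complexes in \eqref{nice} vanish outside degrees $0,1,2$, this long exact sequence has exactly nine terms:
\[
\begin{array}{l}
0\to H^{\le 0}\lbe(C^{\e\bullet}_{\be/n}\be(\lle r-1,\lle G\e))\to H^{\le 0}\lbe(C^{\e\bullet}\be(\lle r,\lle G_{\lbe n}))\to H^{\le 0}\lbe(C^{\e\bullet}_{\lbe n}\be(\lle r,\lle G\e))\\
\to H^{\le 1}\lbe(C^{\e\bullet}_{\be/n}\be(\lle r-1,\lle G\e))\to H^{\le 1}\lbe(C^{\e\bullet}\be(\lle r,\lle G_{\lbe n}))\to H^{\le 1}\lbe(C^{\e\bullet}_{\lbe n}\be(\lle r,\lle G\e))\\
\to H^{\le 2}\lbe(C^{\e\bullet}_{\be/n}\be(\lle r-1,\lle G\e))\to H^{\le 2}\lbe(C^{\e\bullet}\be(\lle r,\lle G_{\lbe n}))\to H^{\le 2}\lbe(C^{\e\bullet}_{\lbe n}\be(\lle r,\lle G\e))\to 0.
\end{array}
\]
It then remains to rewrite the terms: by \eqref{ein}, \eqref{zwei} and \eqref{drei} the three degree-$0$ groups are $\krn\lbe({\rm Res}_{\e G}^{(r-1)}\!/n)$, $\krn\e j_{\le G_{\lbe n}\lbe,\le\fl}^{(r)}$ and $\krn\e {\rm Res}_{\lle G}^{(r)}$; by \eqref{ein2}, \eqref{zwei2} and \eqref{drei2} the three degree-$2$ groups are $\cok\e{\rm Cores}_{\e G}^{(r-1)}$, $\cok\e N_{\be G_{n}\lbe,\le\fl}^{\lle(r)}$ and $\cok\e{\rm Cores}_{\e G,\e n}^{(r)}$; and by Lemma \ref{ok} the middle degree-$1$ group $H^{\le 1}\lbe(C^{\e\bullet}\be(\lle r,\lle G_{\lbe n}))$ is canonically isomorphic to $H^{\le 1}\lbe (\varGamma^{\e\bullet}_{\!n}\lbe(r, G\e))$. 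Substituting these identifications gives the claimed sequence, and its canonicity follows from the functoriality of the long exact sequence of a short exact sequence of complexes together with the canonicity of each identification used.

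I do not anticipate a genuine obstacle: the substantive input — the construction of the short exact sequence of complexes \eqref{nice} and the identification in Lemma \ref{ok} — has already been carried out before the statement. The only points requiring care are bookkeeping ones: checking that the hypothesis $r\geq 1$ is precisely what makes \eqref{nice} and Lemma \ref{ok} available (because of the degree shift built into the Bockstein-type sequences \eqref{pin} and \eqref{nip} and into the rows of diagram \eqref{xcool}), and confirming that all nine resulting groups are $n$-torsion, which is what justifies asserting the sequence "of $n$-torsion abelian groups".
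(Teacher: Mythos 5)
Your proposal is correct and follows exactly the paper's own argument: the sequence is the nine-term long exact cohomology sequence of the short exact sequence of complexes \eqref{nice}, with the terms rewritten via the identities \eqref{ein}--\eqref{drei2} and the isomorphism of Lemma \ref{ok}. The additional bookkeeping remarks (that all groups are $n$-torsion and that $r\geq 1$ is what makes \eqref{nice} available) are accurate and consistent with what the paper establishes before stating the theorem.
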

\begin{proof} The sequence of the theorem is derived from the cohomology sequence induced by \eqref{nice} using the identities \eqref{ein}-\eqref{drei2} and the isomorphism of Lemma \ref{ok}.
\end{proof}

\begin{corollary}\label{fir} Under the hypotheses of the theorem, there exists a canonical exact sequence of $n$-torsion abelian groups
\[
\begin{array}{rcl}
0&\to&G(S\e)\be\cap\be G(\spp\le)^{n}\be/G(S\e)^{n}\to \check{H}^{\le 1}(\spp\!/\be S, G_{\lbe n}\lbe)\to \krn\e {\rm Res}_{\lle G}^{(1)}\\
&\to&  \Psi_{\be N}(n,G\e)/G(S\e)\e G(\spp\le)^{n}\to H^{\le 1}\lbe (\varGamma^{\e\bullet}_{\!n}\lbe(1, G\e))\to H^{\le 1}\lbe (C^{\e\bullet}_{\be n}\lbe(\lle 1,\lle G\e))\\
&\to& G(S\e)/\be N_{\lbe S^{\lle\prime}\!/S}(\lbe G\lbe(\spp\le))\to \cok\e N_{\be G_{n}\lbe,\le\fl}^{\lle(1)}\to \cok\e{\rm Cores}_{\e G,\e n}^{(1)}\to 0,
\end{array}
\]
where $\check{H}^{\le 1}(\spp\!/\be S, G_{\lbe n}\lbe)$ is the first \v{C}ech cohomology group of $G_{\lbe n}$ relative to the fppf covering $\spp\!/\be S$ and $\Psi_{\be N}(n,G\e)=\{x\in G(\spp\le)\colon N_{S^{\lle\prime}\!\lbe/S}(x)\in G(S\e)^{n}\}$.
\end{corollary}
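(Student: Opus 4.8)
The plan is to deduce the corollary from Theorem \ref{main} by taking $r=1$ and rewriting four of the nine terms of the resulting exact sequence; the remaining five terms $\krn\,{\rm Res}_{\lle G}^{(1)}$, $H^{\le 1}(\varGamma^{\e\bullet}_{\!n}\lbe(1,G\e))$, $H^{\le 1}(C^{\e\bullet}_{\be n}\lbe(1,G\e))$, $\cok\, N_{\be G_{n}\lbe,\le\fl}^{(1)}$ and $\cok\,{\rm Cores}_{\e G,\e n}^{(1)}$ already appear in the form required. Three of the four rewrites are routine. For the first term, $e_{\et}^{(0)}$ is the bijection \eqref{wr} (so an isomorphism) and $j_{\e G,\e\et}^{(0)}$ is induced by the adjunction morphism \eqref{imor} $G\hookrightarrow R_{\e S^{\lle\prime}\be/ S}(G_{\lbe S^{\lle\prime}}\be)$; hence ${\rm Res}_{\lle G}^{(0)}\colon G(S\e)\to G(\spp\le)$ is the inclusion and $\krn\big({\rm Res}_{\lle G}^{(0)}\!/n\big)=\big(G(S\e)\cap G(\spp\le)^{n}\big)/G(S\e)^{n}$. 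The fourth term is rewritten by \eqref{four}, which states that $H^{\le 1}(C^{\e\bullet}_{\be/n}\be(0,G\e))=\Psi_{\be N}(n,G\e)/G(S\e)\e G(\spp\le)^{n}$. For the seventh term, \eqref{ide} identifies ${\rm Cores}_{\e G}^{(0)}$ with the norm map $N_{\lbe S^{\lle\prime}\!/S}\colon G(\spp\le)\to G(S\e)$, so that $\cok\,{\rm Cores}_{\e G}^{(0)}=G(S\e)/N_{\lbe S^{\lle\prime}\!/S}(G(\spp\le))$.

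The one substantive step is the identification of the second term $\krn\, j_{\le G_{\lbe n}\lbe,\le\fl}^{(1)}$ with the \v{C}ech group $\check{H}^{\le 1}(\spp\!/\be S,G_{\lbe n}\lbe)$. Here I would first observe that the five-term exact sequence of the Cartan--Leray spectral sequence $H^{\le p}(S_{\fl},R^{\le q}\be f_{\lbe *}(G_{\lbe n,\e S^{\lle\prime}}))\Rightarrow H^{\le p+q}(\spp_{\fl},G_{\lbe n})$ begins
\[
0\to H^{\le 1}(S_{\fl},R_{\e S^{\lle\prime}\be/ S}(G_{\lbe n,\e S^{\lle\prime}}))\xrightarrow{\,e_{\fl}^{(1)}\,} H^{\le 1}(\spp_{\fl},G_{\lbe n})\to\cdots,
\]
so that, although the maps $e_{\fl}^{(r)}$ are not isomorphisms in general, the edge map $e_{\fl}^{(1)}$ is injective. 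Since $e_{\fl}^{(1)}\circ j_{\le G_{\lbe n}\lbe,\le\fl}^{(1)}$ is the restriction map $H^{\le 1}(S_{\fl},G_{\lbe n})\to H^{\le 1}(\spp_{\fl},G_{\lbe n})$ (the reasoning identifying $e_{\et}^{(r)}\circ j_{\e G,\e\et}^{(r)}$ with ${\rm Res}_{\e G}^{(r)}$ recalled before \eqref{res} is independent of the topology), it follows that $\krn\, j_{\le G_{\lbe n}\lbe,\le\fl}^{(1)}=\krn\big(H^{\le 1}(S_{\fl},G_{\lbe n})\to H^{\le 1}(\spp_{\fl},G_{\lbe n})\big)$. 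Finally, this last kernel equals $\check{H}^{\le 1}(\spp\!/\be S,G_{\lbe n}\lbe)$ by the low-degree exact sequence of the \v{C}ech-to-derived-functor spectral sequence for the fppf covering $\spp\!/\be S$ (see, e.g., \cite{mi1}), the map to $H^{\le 1}(\spp_{\fl},G_{\lbe n})$ factoring through the subgroup $\check{H}^{\le 0}$ of it with coefficients in the presheaf $T\mapsto H^{\le 1}(T_{\fl},G_{\lbe n})$.

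Inserting these four identifications into the $r=1$ instance of Theorem \ref{main} yields the asserted exact sequence, its canonicity being inherited from that of Theorem \ref{main} and of each identification. The only place any care is required is the second term, where one must exploit the injectivity of $e_{\fl}^{(1)}$ --- valid by the five-term sequence --- in place of an isomorphism property that fails in general; after that, the passage to \v{C}ech cohomology is standard, and I anticipate no further obstacle.
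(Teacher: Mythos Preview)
Your proposal is correct and follows the same approach as the paper: set $r=1$ in Theorem~\ref{main} and identify the first, second, fourth, and seventh terms using \eqref{ide}, \eqref{four}, and the \v{C}ech identification. The only difference is that the paper cites \cite[Proposition~2.1]{ga18} for the isomorphism $\krn\, j_{\le G_{\lbe n}\lbe,\le\fl}^{(1)}\simeq \check{H}^{\le 1}(\spp\!/\be S,G_{\lbe n})$, whereas you supply a self-contained argument via the injectivity of $e_{\fl}^{(1)}$ (from the low-degree terms of the Cartan--Leray spectral sequence) combined with the \v{C}ech-to-derived spectral sequence; your argument is essentially a proof of that cited proposition.
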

\begin{proof} Set $r=1$ in the theorem and use the identities \eqref{ide}, \eqref{psiN} and \eqref{four} together with the canonical isomorphism $\krn\e j_{\le G_{n}\lbe,\le\fl}^{(r)}\simeq \check{H}^{\le 1}(\spp\!/\be S, G_{\lbe n}\lbe)$ of \cite[Proposition 2.1]{ga18}.	
\end{proof}

\section{Quadratic Galois coverings}\label{quad}

Let $(\e f,G\e)$ be an admissible pair such that $f$ is \'etale and let $\Delta$ be a finite group of order $n\geq 2$ which acts on $\spp/S$ from the right. Then $f$ is called a {\it Galois covering with Galois group $\Delta$} if the canonical map
\begin{equation}\label{gal}
\coprod_{\e \delta\e\in\e \Delta}\be \spp\to \spp\!\times_{S}\!\spp, (x,\delta\e)\mapsto (x,x\delta\e),
\end{equation}
is an isomorphism of $S$-schemes. See  \cite[V, Proposition 2.6 and Definition  2.8]{sga1}. In this case $n={\rm rank}\e f$ and there exists a canonical isomorphism of abelian groups
\begin{equation}\label{gpc}
\check{H}^{\le 1}(\spp\be/\be S, G_{\lbe n})\simeq H^{\le 1}(\Delta, G(\spp\le)_{\lbe n}),
\end{equation}
where $G(\spp\le)_{\lbe n}$ is a left $\Delta$-module via the given right action of $\Delta$ on $\spp\!/\lbe S$. See \cite[III, Example 2.6, p.~99]{mi1}.

By \eqref{gal} and Corollary \ref{mb} (with $\sppp=\spp$ there), there exists a canonical isomorphism of $\spp$-schemes
\[
R_{\e S^{\lle\prime}\!/\lbe S}^{\e(1)}(G_{\lbe n})_{S^{\lle\prime}}\isoto  G_{\lbe n,\e S^{\lle\prime}}^{\e n-1}
\]
(see the proof of Lemma \ref{vnice}). Under the preceding isomorphism, the closed immersion
$b_{\le S^{\lle\prime}}\colon G_{\lbe n,\e S^{\lle\prime}}\hookrightarrow R_{\e S^{\lle\prime}\!/\lbe S}^{\e(1)}(G_{\lbe n})_{S^{\lle\prime}}$ induced by \eqref{bs} corresponds to the diagonal embedding $d^{\e (n)}_{S^{\lle\prime}}\colon G_{\lbe n,\e S^{\lle\prime}}\to G_{\lbe n,\e S^{\lle\prime}}^{\e n-1}$. Now, for every $n\geq 3$, let
\[
c^{\le (n)}\colon G_{\lbe n}^{\e n-1}\to G_{\lbe n}^{\e n-2}, (x_{1},\dots,x_{n-1})\mapsto (x_{1}x_{n-1}^{-1},\dots, x_{n-2}x_{n-1}^{-1}),
\]
and let $c^{\le (2)}\colon G_{\lbe 2}\to S$ be the structural morphism of $G_{\lbe 2}$. Then $c^{\le (n)}$ is a morphism of commutative $S$-group schemes and there exists a canonical exact sequence of commutative $S$-group schemes
\begin{equation}\label{awe}
0\to G_{\lbe n}\overset{\!d^{\e(n)}}{\lra} G_{\lbe n}^{\e n-1}\overset{\!c^{\lle (n)}}{\lra} G_{\be n}^{\e n-2}\to 0,
\end{equation}
where $d^{\e(n)}$ is the diagonal $S$-morphism. Now recall $G(n)\e\in\e{\rm Ob}\e\sfs$ \eqref{gen}. We define an isomorphism of $\spp$-group schemes $G(n)_{S^{\lle\prime}}\isoto G_{\be n,\e S^{\lle\prime}}^{\e n-2}$ by the commutativity of the diagram
\[
\xymatrix{0\ar[r]& G_{\lbe n,\e S^{\lle\prime}}\ar@{=}[d]\ar[r]^(.4){b_{ S^{\lle\prime}}}&R_{\e S^{\lle\prime}\!/\lbe S}^{\e(1)}(G_{\lbe n})_{S^{\lle\prime}}\ar[d]^{\sim}\ar[r]^(.55){q_{S^{\lle\prime}}}& G(n)_{S^{\lle\prime}}\ar[d]^{\sim}\ar[r]&0\\
0\ar[r]& G_{\lbe n,\e S^{\lle\prime}}\ar[r]^{d_{\lbe S^{\lle\prime}}^{\e(n)}}& G_{\lbe n,\e S^{\lle\prime}}^{\e n-1}\ar[r]^(.55){c_{\lbe S^{\lle\prime}}^{\le(n)}}& G_{\lbe n,\e S^{\lle\prime}}^{\e n-2}\ar[r]&0,
}
\]
where the top (respectively, bottom) row is induced by \eqref{lhb} (respectively, \eqref{awe}). We conclude that $G(2)=0$, whence $H^{\le 1}\lbe (\varGamma^{\e\bullet}_{\!2}\lbe(r, G\e))=0$ for every $r\geq 1$ \eqref{gam}. Thus the following statement is immediate from Theorem \ref{main} and Corollary \ref{fir} using \eqref{four} and \eqref{gpc} for $n=2$.

\begin{theorem} \label{last} Let $f\colon \spp\to S$ be a quadratic Galois covering of locally noetherian schemes with Galois group $\Delta$ and let $G$ be a smooth, commutative and quasi-projective $S$-group scheme with connected fibers. Assume that, for every point $s\in S$ such that ${\rm char}\, k(\lbe s\lbe)=2$, $G_{k(\lbe s\lbe)}$ is a semiabelian $k(\lbe s\lbe)$-variety. Then, for every $r\geq 1$, there exist canonical exact sequences of $2$-torsion abelian groups
\begin{equation}\label{npr}
0\to\krn\lbe\big({\rm Res}_{\e G}^{(r-1)}\!/2)\to \krn\e j_{G_{2}\lbe,\le\fl}^{(r)}\to \krn\e {\rm Res}_{\lle G}^{(r)}\to H^{\le 1}\lbe ( C^{\e\bullet}_{\be/2}\lbe(\lle r-1,\lle G\e))\to 0
\end{equation}
and
\begin{equation}\label{nr}
0\to H^{\le 1}\lbe (C^{\e\bullet}_{2}\lbe(\lle r,\lle G\e))\to \cok\e{\rm Cores}_{\e G}^{(r-1)}\to \cok\e N_{\be G_{\lbe 2}\lbe,\le\fl}^{\lle(r)}\to \cok\e{\rm Cores}_{\e G,\e 2}^{(r)}\to 0,
\end{equation}
where the complexes $C^{\e\bullet}_{\be/2}\be(\lle r-1,\lle G\e)$ and $C^{\e\bullet}_{ 2}\be(\lle r,\lle G\e)$ are given by \eqref{comp1} and \eqref{comp3}, respectively.
In particular, if $r=1$, then \eqref{npr} induces an exact sequence of $2$-torsion abelian groups
\begin{equation}\label{cool}
0\to \frac{G(S\e)\be\cap\be G(\spp\le)^{2}}{G(S\e)^{2}}\to H^{\le 1}(\Delta, G(\spp\le)_{\lbe 2}\lbe)\to \krn\e {\rm Res}_{\lle G}^{(1)}\to \frac{\Psi_{\be N}(2,G\e)}{G(S\e)\e G(\spp\le)^{2}}\to 0,
\end{equation}
where $\Psi_{\be N}(\e 2,G\e)=\{x\in G(\spp\le)\colon N_{S^{\lle\prime}\!\lbe/S}(x)\in G(S\e)^{2}\e\}$.	
\end{theorem}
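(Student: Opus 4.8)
The plan is to obtain the two displayed sequences directly from the main Theorem \ref{main} (and its Corollary \ref{fir}), applied with $n=2$, exploiting the vanishing $G(2)=0$ recorded just above.

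First I would note that $(\e f,G\e)$ is admissible in the sense of Definition \ref{adm}: conditions (i) and (ii) hold since a Galois covering is in particular finite, faithfully flat and \'etale — here of constant rank $n=2$ — while $G$ is smooth, commutative and quasi-projective with connected fibers by hypothesis; condition (iii.1) is exactly the semiabelian reduction hypothesis imposed at the points of residue characteristic $2$; and condition (iii.2) is automatic because $f$ is \'etale. Thus Theorem \ref{main} applies to $(\e f,G\e)$ for every $r\geq 1$.

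Next, since $G(2)=0$ we have $H^{\le r}\be(S_{\fl},G(2))=0$ for all $r\geq 0$, and hence $H^{\le 1}\lbe (\varGamma^{\e\bullet}_{\!2}\lbe(r, G\e))=0$ for every $r\geq 1$, this group being by \eqref{gam} a subquotient of $H^{\le r}\be(S_{\fl},G(2))$. Inserting this vanishing into the nine-term exact sequence of Theorem \ref{main} (with $n=2$) forces the map $H^{\le 1}\lbe ( C^{\e\bullet}_{\be/2}\lbe(\lle r-1,\lle G\e))\to H^{\le 1}\lbe (\varGamma^{\e\bullet}_{\!2}\lbe(r, G\e))$ and the map $H^{\le 1}\lbe (\varGamma^{\e\bullet}_{\!2}\lbe(r, G\e))\to H^{\le 1}\lbe (C^{\e\bullet}_{2}\lbe(\lle r,\lle G\e))$ to be zero. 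Consequently the long sequence severs into the four-term exact sequences \eqref{npr} (its first half, now with $\krn\e {\rm Res}_{\le G}^{(r)}\to H^{\le 1}\lbe ( C^{\e\bullet}_{\be/2}\lbe(\lle r-1,\lle G\e))$ surjective) and \eqref{nr} (its second half, now with $H^{\le 1}\lbe (C^{\e\bullet}_{2}\lbe(\lle r,\lle G\e))\hookrightarrow \cok\e{\rm Cores}_{\e G}^{(r-1)}$).

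Finally, to deduce \eqref{cool} I would take $r=1$ in \eqref{npr} — equivalently, invoke Corollary \ref{fir} with its middle term $H^{\le 1}\lbe (\varGamma^{\e\bullet}_{\!2}\lbe(1, G\e))=0$. There ${\rm Res}_{\le G}^{(0)}$ is the inclusion $G(S\e)\hookrightarrow G(\spp\le)$, so $\krn\big({\rm Res}_{\le G}^{(0)}/2\big)=G(S\e)\be\cap\be G(\spp\le)^{2}\be/G(S\e)^{2}$; the group $\krn\e j_{\le G_{2}\lbe,\le\fl}^{(1)}$ is $\check{H}^{\le 1}(\spp\!/\be S, G_{\lbe 2}\lbe)$ by \cite[Proposition 2.1]{ga18}, which is $H^{\le 1}(\Delta, G(\spp\le)_{\lbe 2}\lbe)$ by \eqref{gpc} since $f$ is Galois with group $\Delta$; and $H^{\le 1}\lbe ( C^{\e\bullet}_{\be/2}\lbe(\lle 0,\lle G\e))=\Psi_{\be N}(2,G\e)/G(S\e)\e G(\spp\le)^{2}$ by \eqref{four}. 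Substituting these three identifications into \eqref{npr} produces \eqref{cool}. I do not anticipate a genuine obstacle: the substantial work — the exact sequence of complexes \eqref{nice}, the computation of its cohomology via Lemma \ref{ok}, and the vanishing $G(2)=0$ in the quadratic case — is already in place, and the only points requiring care are the admissibility check and confirming that the vanishing of the middle cohomology group really does split the long exact sequence into the two asserted pieces.
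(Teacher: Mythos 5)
Your proposal is correct and follows essentially the same route as the paper: the admissibility check, the vanishing $H^{\le 1}\lbe (\varGamma^{\e\bullet}_{\!2}\lbe(r, G\e))=0$ coming from $G(2)=0$, the resulting splitting of the nine-term sequence of Theorem \ref{main} into \eqref{npr} and \eqref{nr}, and the identifications via \eqref{four}, \eqref{gpc} and \cite[Proposition 2.1]{ga18} for $r=1$ are exactly the steps the paper summarizes as ``immediate from Theorem \ref{main} and Corollary \ref{fir}.'' No gaps.
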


The above theorem yields, in particular, the following ``lower bound" for the relative cohomological Brauer group of a quadratic Galois covering of locally noetherian schemes in terms of their Picard groups.

\begin{corollary}\label{relb} Let $f\colon \spp\to S$ be a quadratic Galois covering of locally noetherian schemes. Then there exists a canonical surjection of $2$-torsion abelian groups
\[
\br^{\e\prime}\lbe(\spp\!\lbe/\lbe S\e)\twoheadrightarrow \frac{\krn[\e \pic\spp\!/\lbe 2\overset{{\rm Cores}_{S^{\prime}\!\lbe/\lbe S}}{\lra}\pic S\be/\lbe 2\e]}{\img[\e\pic S\be/2\overset{\,\,{\rm Res}_{S^{\prime}\!\lbe/S}}{\lra} \pic\spp\!/2\e]}\,,
\]	
where $\br^{\le\prime}\be(\spp\!\lbe/\lbe S\e)=\krn\lbe[\e{\rm Res}_{S^{\lle \prime}\!\lbe/S}\colon\br^{\le\prime}S\to \br^{\le\prime}\spp\,]$ is the relative  cohomological Brauer group of $\spp$ over $S$.
\end{corollary}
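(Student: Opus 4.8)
The plan is to specialize Theorem~\ref{last} to the pair $(\spp\lbe/\lbe S,\e\bg_{m,\e S})$ in cohomological degree $r=2$ and then to rewrite the resulting exact sequence \eqref{npr} in terms of Picard and Brauer groups. First I would check that this pair meets the hypotheses of Theorem~\ref{last}: the $S$-group scheme $\bg_{m,\e S}$ is smooth, commutative and quasi-projective over $S$ with connected fibers, and for every $s\in S$ with $\mathrm{char}\,k(\lbe s\lbe)=2$ the fiber $\bg_{m,\e k(\lbe s\lbe)}$ is a one-dimensional torus, hence a semiabelian $k(\lbe s\lbe)$-variety; moreover $f$ is \'etale, being a Galois covering. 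So Theorem~\ref{last} applies with $G=\bg_{m,\e S}$, and for $r=2$ it yields the exact sequence \eqref{npr}.

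Next I would identify the two outer terms of \eqref{npr} for $r=2$ and $G=\bg_{m,\e S}$. At the left end, $\krn\e{\rm Res}_{\e\bg_{m,\e S}}^{(2)}$ is the kernel of the restriction map $H^{\le 2}(S_{\et},\bg_{m})\to H^{\le 2}(\spp_{\et},\bg_{m})$, which is $\br^{\e\prime}\lbe(\spp\!\lbe/\lbe S\e)$ by definition. At the right end, $H^{\le 1}\be(C^{\e\bullet}_{\be/2}\be(\lle 1,\lle \bg_{m,\e S}\e))$ is the $H^{1}$ of the complex \eqref{comp1} with $(r,G)=(1,\bg_{m,\e S})$; using the identifications $H^{\le 1}(S_{\et},\bg_{m})=\pic S$ and $H^{\le 1}(\spp_{\et},\bg_{m})=\pic\spp$ of the preliminaries, together with the \'etale edge isomorphism $e_{\et}^{(1)}$ (which turns the maps $j^{(1)}_{\bg_{m},\e\et}$ and $N^{(1)}_{\bg_{m},\e\et}$ of \eqref{comp1} into ${\rm Res}_{S^{\prime}\!\lbe/S}$ and ${\rm Cores}_{S^{\prime}\!\lbe/\lbe S}$ on Picard groups), this complex is identified with
\[
\pic S\be/\lbe 2\overset{\,\,{\rm Res}_{S^{\prime}\!\lbe/S}}{\lra}\pic\spp\!/2\overset{{\rm Cores}_{S^{\prime}\!\lbe/\lbe S}}{\lra}\pic S\be/\lbe 2,
\]
so that $H^{\le 1}\be(C^{\e\bullet}_{\be/2}\be(\lle 1,\lle \bg_{m,\e S}\e))$ equals the quotient $\krn[\e\pic\spp\!/2\to\pic S\be/2\e]\lbe/\lbe\img[\e\pic S\be/2\to\pic\spp\!/2\e]$ appearing in the statement. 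This is simply the ``missing middle'' companion to the identifications \eqref{ein} and \eqref{ein2}.

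Finally, the surjectivity of the last arrow in \eqref{npr} for $r=2$ then provides the required canonical surjection of $2$-torsion abelian groups $\br^{\e\prime}\lbe(\spp\!\lbe/\lbe S\e)=\krn\e{\rm Res}_{\e\bg_{m,\e S}}^{(2)}\twoheadrightarrow H^{\le 1}\be(C^{\e\bullet}_{\be/2}\be(\lle 1,\lle \bg_{m,\e S}\e))$, the canonicity and the $2$-torsion property being inherited from Theorem~\ref{last}. There is no essential obstacle; the only point that must be checked with some care is the claim in the previous paragraph, namely that under the edge isomorphism the two-term complex \eqref{comp1} for $(r,G)=(1,\bg_{m,\e S})$ is exactly the stretch $\pic S\be/2\to\pic\spp\!/2\to\pic S\be/2$ of the norm complex, with its arrows the Picard-group restriction and corestriction maps reduced mod $2$.
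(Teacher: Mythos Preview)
Your proposal is correct and follows exactly the paper's own argument: set $r=2$ and $G=\bg_{m,\e S}$ in the exact sequence \eqref{npr} of Theorem~\ref{last}, identify $\krn\e{\rm Res}_{\e\bg_{m,\e S}}^{(2)}=\br^{\e\prime}\lbe(\spp\!\lbe/\lbe S\e)$ and $H^{\le 1}\lbe(C^{\e\bullet}_{\be/2}\lbe(\lle 1,\lle \bg_{m,\e S}\e))$ with the displayed Picard-group quotient via \eqref{comp1}, and read off the surjection from the tail of \eqref{npr}. Your extra care in verifying the admissibility hypotheses and in making explicit the edge-isomorphism identification of the middle complex is welcome but not logically new; the paper's proof simply asserts these identifications.
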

\begin{proof} Set $r=2$ and $G=\bg_{m,\e S}$ in \eqref{npr} and note that, by \eqref{comp1},
\[
H^{\le 1}\lbe ( C^{\e\bullet}_{\be/2}\lbe(\lle 1,\lle \bg_{m,\e S}))=\frac{\krn[\e \pic\spp\!/\lbe 2\overset{{\rm Cores}_{S^{\prime}\!\lbe/\lbe S}}{\lra}\pic S\be/\lbe 2\e]}{\img[\e\pic S\be/2\overset{\,\,{\rm Res}_{S^{\prime}\!\lbe/S}}{\lra} \pic\spp\!/2\e]}.
\] 	
\end{proof}

\smallskip

\begin{remark} Assume that $S$ is a noetherian scheme which admits an ample invertible sheaf, and similarly for $\spp$. Then $(\br\e\spp\le)_{2}=(\br\e S\e)_{2}$ and similarly for $\spp$ by \eqref{br}. Thus, setting $r=2$ and $G=\bg_{m,\e S}$ in \eqref{nr}\lle, we recover the Knus-Parimala-Srinivas injection \cite{kps, ps}:
\[
\frac{\krn[\e(\br\e\spp\le)_{2}\overset{{\rm Cores}_{S^{\lle\prime}\!/S}}{\lra}(\br\e S\le)_{2}]}{\img[(\br\e S\le)_{2}\overset{\,\,{\rm Res}_{S^{\lle\prime}\!\lbe/S}}{\lra}(\br\e\spp\le)_{2}]}\,\hookrightarrow\, \frac{\pic S}{{\rm Cores}_{S^{\lle\prime}\!/S}(\pic \spp\le)}.
\]
\end{remark}

\section{Arithmetical applications}\label{new}

Recall that a global field is either a number field, i.e., a finite extension of $\Q$, or a global function field, i.e., the function field of a smooth, projective and irreducible algebraic curve over a finite field.

Let $K\be/\be F$ be a quadratic Galois extension of global fields with Galois group $\Delta$ and let $\si$ be a nonempty finite set of primes of $F$ containing the archimedean primes and the non-archimedean primes that ramify in $K$. Let $\Sigma_{K}$ be the set of primes of $K$ that lie above the primes in $\Sigma$, write $\mathcal O_{\lbe F,\e \Sigma}$ for the ring of $\Sigma$-integers of $F$ and let $\mathcal O_{\lbe K,\e \Sigma_{K}}$ be the ring of $\Sigma_{K}$-integers of $K$. If $S=\spec\mathcal O_{\lbe F,\e \Sigma}$ and $\spp=\spec \mathcal O_{\lbe K,\e \Sigma_{K}}$, then the canonical morphism $f\colon \spp\to S$ induced by the inclusion $\ofs\subset\mathcal O_{\lbe K,\e \Sigma_{K}}$ is a quadratic Galois covering. Now let $G_{\be F}$ be a smooth, commutative and connected $F$-group scheme of finite type which admits a N\'eron-Raynaud model over $S$ with semiabelian reduction at every point $s\in S$ such that ${\rm char}\, k(s)=2$. If $G$ denotes the identity component of the indicated model, then the pair $(f,G\le)$ satisfies the hypotheses of Theorem \ref{last}\e. In this section we will consider the following specific choices of $G_{\be F}$:
\begin{enumerate}
\item $G_{\be F}$ is an {\it invertible} $F$-torus, i.e., $G_{\be F}$ is isomorphic to a direct factor of a finite direct product of $F$-tori of the form $R_{\le L\lbe/\lbe F}(\bg_{m,\le L})$, where $L/F$ is a finite separable extension of $F$, with multiplicative (i.e., toric) reduction at every point $s\in S$ such that ${\rm char}\, k(s)=2$.
\item $G_{\be F}$ is an abelian variety over $F$ with abelian (i.e., good) reduction over  $S$, i.e., $G$ is an abelian scheme over $S$.
\end{enumerate}

\begin{remark}\label{nik} In Case (2) above, $G(S\e)=G(F\e)$ and $G(\spp\e)=G(K\le)$ since $G$ and $G_{\be\spp}$ are N\'eron models of $G_{\be F}$ and $G_{\be K}=G_{\be F}\!\times_{\be F}\!K$, respectively \cite[\S1.2, Proposition 8, p.~15]{blr}.
\end{remark}

\smallskip

If $G_{\be F}$ is a smooth, commutative and connected $F$-group scheme of finite type  which admits a N\'eron-Raynaud model over $S$ with identity component $G$, the {\it N\'eron-Raynaud $\Sigma$-class group of $G_{\be F}$}, introduced in \cite[\S3]{ga12}, is the quotient
\begin{equation}\label{clgp}
C_{G_{\be F}\lbe,\e F,\e \Sigma}=G(\mathbb A_{S})/G(F)\e G(\lbe\mathbb A_{S}\lbe (S\e)),
\end{equation}
where $\mathbb A_{S}$ (respectively, $\mathbb A_{S}\lbe (S\le)$) is the ring of adeles (respectively, integral adeles) of $S$. The above group is known to be finite if $G_{\be F}$ is affine \cite[\S1.3]{c} (and thus for every $G_{\be F}$ in Case (1) above) and coincides with the $\Sigma$-ideal class group of $F$ when $G_{\be F}=\bg_{m,\le F}$. In general, \eqref{clgp} is a quotient of $\bigoplus_{s\in S}\lbe\Phi_{s}\lbe(G_{\be F}\be)(k(s))$, where $\Phi_{s}(G_{\be F}\be)$ is the $k(s)$-group scheme of connected components of the special fiber of the N\'eron-Raynaud model of $G_{\be F_{\lbe v}}:=G_{\be F}\times_{\lbe F} F_{\be v}$, where $v\notin\si$ is the prime of $F$ that corresponds to $s\in S$ \cite[Theorem 3.2]{ga12}. Consequently, \eqref{clgp} is also finite if $G_{\be F}$ is an abelian variety over $F$. Further, \eqref{clgp} is {\it trivial} in Case (2) above. The {\it capitulation problem for $C_{G_{\be F}\lbe,\e F,\e \Sigma}$} (relative to $K\be/F\e$) is the problem of describing the kernel of the map $j_{G_{\be F},\e K\be/\lbe F,\e\Sigma}\colon C_{G_{\be F}\lbe,\e F,\e \Sigma}\to C_{G_{\be K}\lbe,\e K,\e \Sigma_{K}}$ induced by $f$. This problem has a long history in the classical case $G_{\be F}=\bg_{m,\e F}$ when $F$ is a number field (see, e.g., \cite{st}) and was discussed in \cite[\S4]{ga10} for arbitrary finite Galois extension of global fields $K/F$ and arbitrary $F$-tori $G_{\be F}$ with multiplicative reduction over $S$.

\smallskip

Next, the {\it $\Sigma$-Tate-Shafarevich group of $G_{\be F}$} is the group 
\[
\sha^{1}_{\Sigma}(F,G\e)=\krn\!\be\left[H^{1}\be(F,G\e)\to \prod_{v\notin\Sigma}H^{1}\be(F_{v},G\e)\right],
\]
where $H^{1}\be(F,G\e)=H^{1}\be(F,G_{\be F}\be)$ and $H^{1}\be(F_{\be v},G\e)=H^{1}\be(F_{\be v},G_{\be F_{\lbe v}}\be)$ are Galois cohomology groups. For every prime $v$ of $F$, fix a prime $v^{\e\prime}$ of $K$ lying above $v$ and set $\Delta_{\e v^{\le\prime}}={\rm Gal}(K_{v^{\le\prime}}\be/\lbe F_{\be v})$. Every prime $w$ of $K$ lying above $v$ has the form $\sigma v^{\le\prime}$ for some $\sigma\in\Delta$ and $\sigma$ induces an isomorphism of abelian groups $H^{1}\be(K_{ w},G\e)\isoto H^{1}\be(K_{ v^{\le\prime}},G\e)$. It follows that the $\Sigma_{K}$-Tate-Shafarevich group of $G_{\be K}:=G_{\be F}\!\times_{\lbe F}\! K$ equals
\[
\sha^{1}_{\Sigma}\lbe(K,G\e):=\krn\!\be\left[H^{1}\be(K,G\e)\to \prod_{v\notin\Sigma}H^{1}\be(K_{v^{\le\prime}},G\e)\right].
\]
The {\it capitulation problem for} $\sha^{1}_{\Sigma}(F,G\e)$ (relative to $K\be/\lbe F\e$)
is the problem of describing the kernel of the map $\sha^{1}_{\Sigma}\lbe({\rm Res}_{\lbe G, K\be/\be F}^{(1)})\colon \sha^{1}_{\Sigma}(F,G\e)\to \sha^{1}_{\Sigma}(K,G\e)$
induced by ${\rm Res}_{G,\e K\be/\lbe F}^{(1)}\colon H^{1}\be(F,G\e)\to H^{1}\be(K,G\e)$. By the inflation-restriction exact sequence in Galois cohomology \cite[Proposition 4, p.~100]{aw}, the exact and commutative diagram of abelian groups
\[
\xymatrix{0\ar[r]& \ar[d]^{\sha^{1}_{\Sigma}\lbe({\rm Res}_{\lbe G, K\be/\be F}^{(1)}\lbe)}\sha^{1}_{\Sigma}(F,G\e)\ar[r]& \ar[d]^{{\rm Res}_{G, K\be/\lbe F}^{(1)}}H^{1}\be(F,G\e)\ar[r]& \displaystyle{\prod_{v\notin\Sigma}}H^{1}\be(F_{\be v},G\e)\ar[d]^{\underset{v\notin\Sigma}{\prod}\!{\rm Res}_{G, K_{\be v^{\prime}}\be/\lbe F_{\be v}}^{(1)}}\\
0\ar[r]& \sha^{1}_{\Sigma}\lbe(K,G\e)\ar[r]& H^{1}(K, G\e)\ar[r]& \displaystyle{\prod_{v\notin\Sigma}}H^{1}\be(K_{v^{\le\prime}},G\e)
}
\]
yields the equality
\begin{equation}\label{ksha0}
\krn\e \sha^{1}_{\Sigma}\lbe({\rm Res}_{\lbe G, K\be/\be F}^{(1)}\lbe)=\sha^{1}_{\Sigma}\lbe(\Delta,G(K)),
\end{equation}
where\,\footnote{\e Here we write $G(K)$ for $G_{\be K}\be(K)$ and $G(K_{v^{\le\prime}})$ for $G_{\be K_{\lbe v^{\prime}}}\be(K_{ v^{\prime}}\be)$.} 
\begin{equation}\label{ksha}
\sha^{1}_{\Sigma}\lbe(\Delta,G(K))=
\krn\!\be\left[ H^{1}(\Delta, G(K))\to \displaystyle{\prod_{v\notin\Sigma}}H^{1}\be(\Delta_{\le v^{\prime}},G(K_{v^{\le\prime}}\be))\right].
\end{equation}

Now, by \cite[(3.10)]{ga12}, there exists a canonical exact sequence of abelian groups 
\begin{equation}\label{seq1}
0\to C_{G_{\be F}\lbe,\e F,\e \Sigma}\to H^{1}(S_{\et}, G\e)\to \sha^{1}_{\Sigma}(F,G\e)\to 0.
\end{equation}
A similar exact sequence exists over $K$ if $G\!\times_{\lbe S}\!\spp$ agrees with the identity component of the N\'eron-Raynaud model of $G_{\be K}$ over $\spp$, which is the case here since $\spp$ is \'etale over $S$ (see \cite[\S7.2, Theorem 1(ii), p.~176]{blr} and \cite[${\rm VI_B}$, Proposition 3.3]{sga3}). Thus there exists a canonical exact and commutative diagram of abelian groups
\[
\xymatrix{0\ar[r]& \ar[d]^{j_{G_{\be F},\e K\be/\lbe F,\e\Sigma}}C_{G_{\be F}\lbe,\e F,\e \Sigma}\ar[r]& \ar[d]^{{\rm Res}_{\le G,\e \spp\!\lbe/\be S}^{(1)}}H^{1}(S_{\et}, G\e)\ar[r]& \sha^{1}_{\Sigma}(F,G\e)\ar[d]^{\sha^{1}_{\Sigma}\lbe({\rm Res}_{G,\lle K\be/ \lbe F}^{(1)})}\ar[r]& 0\\
0\ar[r]& C_{G_{\be K}\lbe,\e K,\e \Sigma_{K}}\ar[r]& H^{1}(\spp_{\et}, G\e)\ar[r]& \sha^{1}_{\Sigma}(K,G\e)\ar[r]& 0
}
\]
By \eqref{ksha0}, the preceding diagram induces an exact sequence of abelian groups
\begin{equation}\label{seq2}
0\to \krn\e j_{G_{\be F},\e K\be/\lbe F,\e\Sigma}\to \krn\e {\rm Res}_{\le G,\e \spp\!\lbe/\be S}^{(1)}\to \sha^{1}_{\Sigma}\lbe(\Delta,G(K)).
\end{equation}

In Case (1) above, i.e., $G_{\be F}$ is an invertible $F$-torus, the group $\sha^{1}_{\Sigma}\lbe(\Delta,G(K))$ (which is isomorphic to a subgroup of $H^{1}(F,G\e)$ via the inflation map) is trivial since $H^{1}(F,G\e)$ is trivial by Hilbert's Theorem 90 \cite[Lemma 4.8(a)]{ga10}. In this case \eqref{seq2} reduces to a canonical isomorphism of abelian groups
\[
\krn\e j_{G_{\be F},\e K\be/\lbe F,\e\Sigma}=\krn\e {\rm Res}_{\le G,\e \spp\!\lbe/\be S}^{(1)}.
\]
In Case (2), i.e., $G$ is an abelian scheme over $S$, the groups $C_{G_{\be F}\lbe,\e F,\e \Sigma}$ and $C_{G_{\be K}\lbe,\e K,\e \Sigma_{K}}$ are trivial (as noted previously) and \eqref{seq1} over $F$ and over $K$ yield isomorphisms $H^{1}(S_{\et}, G\e)=\sha^{1}_{\Sigma}(F,G\e)$ and $H^{1}(\spp_{\et}, G\e)=\sha^{1}_{\Sigma}(K,G\e)$. In this case \eqref{seq2} reduces to a canonical isomorphism of abelian groups
\[
\sha^{1}_{\Sigma}\lbe(\Delta,G(K))=\krn\e {\rm Res}_{\le G,\e \spp\!\lbe/\be S}^{(1)}.
\]

Theorem \ref{last} (or, more precisely, sequence \eqref{cool}) and Remark \ref{nik} yield the following statement.

\begin{theorem}\label{fir1} Let $K\be/\be F$ be a quadratic Galois extension of global fields with Galois group $\Delta$ and let $\si$ be a nonempty finite set of primes of $F$ containing the archimedean primes and the non-archimedean primes that ramify in $K$. Set $S=\spec \mathcal O_{\lbe F,\e \Sigma}$ and $\spp =\mathcal O_{\lbe K,\e \Sigma_{K}}$.
\begin{enumerate}
\item[(i)] If $T$ is an invertible torus over $F$ with multiplicative reduction at every point $s\in S$ such that ${\rm char}\, k(s)=2$, then there exists a canonical exact sequence of $2$-torsion abelian groups
\[
\phantom{xxxxx}\frac{\mathcal T^{\e 0}\lbe(S\e)\be\cap\be \mathcal T^{\e 0}(\spp\le)^{2}}{\mathcal T^{\e 0}(S\e)^{2}}\hookrightarrow H^{\le 1}(\Delta, \mathcal T^{\e 0}(\spp)_{\lbe 2})\to\krn\e j_{\e T,\e K\be/\lbe F,\e\Sigma}\twoheadrightarrow\frac{\Psi_{\be N}(2,\mathcal T^{\e 0})}{\mathcal T^{\e 0}\lbe(S\e)\e \mathcal T^{\e 0}\lbe(\spp\le)^{2}}\,,
\]
where $\mathcal T^{\e 0}$ is the identity component of the N\'eron-Raynaud model of \,$T$ over $S$, $j_{\e T,\e K\be/\lbe F,\e\Sigma}\colon C_{T\lbe,\e F,\e \Sigma}\to C_{T\lbe,\e K,\e \Sigma_{K}}$ is the canonical capitulation map for N\'eron-Raynaud $\Sigma$-class groups of $T$ relative to the extension $K\be/\lbe F$ and $\Psi_{\be N}(2,\mathcal T^{\e 0})=\{x\in \mathcal T^{\e 0}\lbe(\spp\le)\colon N_{S^{\lle\prime}\!\lbe/S}(x)\in \mathcal T^{\e 0}(S\e)^{2}\}$.
		
\item[(ii)] If $\mathcal{A}$ is an abelian scheme over $S$, $A=\mathcal{A}_{\e F}$ and $\sha^{1}_{\Sigma}\lbe(\Delta,A(K))$ is the group \eqref{ksha}, then there exists a canonical exact sequence of $2$-torsion abelian groups
\[
\phantom{xxxxx}\frac{A(F\le)\be\cap\be 2\lle A(K\le)}{2\lle A(F\le)}\hookrightarrow H^{\lle 1}\lbe(\Delta, A(K)_{ 2})\to\sha^{1}_{\Sigma}\lbe(\Delta,A(K))\twoheadrightarrow  \frac{\Psi_{\be N}(2,A)}{(\le A(F\le)+2A(K\le))}\,,
\]
where $\Psi_{\be N}(2,A\le)=\{P\in A(K\le)\colon N_{K\be/\lbe F}(P\le)\in 2A(F\le)\}$.
\end{enumerate}
	
\end{theorem}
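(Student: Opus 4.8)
The plan is to derive both statements directly from sequence \eqref{cool} of Theorem \ref{last} by making the two indicated choices of $G$ and then rewriting the terms via the identifications already established in this section. For part (i), take $G=\mathcal T^{\e 0}$, the identity component of the N\'eron-Raynaud model of $T$ over $S$. As recalled at the beginning of this section, the hypothesis of multiplicative reduction at every $s\in S$ with $\mathrm{char}\,k(s)=2$ guarantees that $\mathcal T^{\e 0}_{k(s)}$ is a torus, hence a semiabelian $k(s)$-variety, at such $s$; thus $(f,\mathcal T^{\e 0})$ meets the hypotheses of Theorem \ref{last}. Sequence \eqref{cool} applied to this $G$ is precisely the four-term exact sequence of part (i), except that its third term reads $\krn\e{\rm Res}_{\mathcal T^{\e 0}}^{(1)}$ rather than $\krn\e j_{\e T,\e K\be/\lbe F,\e\Sigma}$. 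To conclude it remains only to invoke the canonical isomorphism $\krn\e j_{G_{\be F},\e K\be/\lbe F,\e\Sigma}=\krn\e{\rm Res}_{\le G,\e\spp\!\lbe/\be S}^{(1)}$ obtained above in Case (1) from the exact sequence \eqref{seq2} together with the vanishing of $\sha^{1}_{\Sigma}(\Delta,T(K))$, which is a consequence of Hilbert's Theorem 90.

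For part (ii), take $G=\mathcal A$; being an abelian scheme over $S$ it is smooth, commutative, projective with connected fibers, and every fiber $\mathcal A_{k(s)}$ is an abelian variety, hence a fortiori semiabelian, so again Theorem \ref{last} applies and \eqref{cool} yields an exact sequence of the same shape. Two routine translations then produce the stated sequence. First, by Remark \ref{nik} (the N\'eron mapping property, valid here because $\mathcal A$ and $\mathcal A_{\spp}$ are N\'eron models and $\spp/S$ is \'etale), $\mathcal A(S)=A(F)$ and $\mathcal A(\spp)=A(K)$. Second, writing the group law on $A$ additively, one has $\mathcal A(\spp)_{2}=A(K)_{2}$, $\mathcal A(S)^{2}=2A(F)$, $\mathcal A(S)\cap\mathcal A(\spp)^{2}=A(F)\cap 2A(K)$, $\mathcal A(S)\,\mathcal A(\spp)^{2}=A(F)+2A(K)$, and $\Psi_{\be N}(2,\mathcal A)=\{P\in A(K):N_{K\be/\lbe F}(P)\in 2A(F)\}=\Psi_{\be N}(2,A)$. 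Finally the canonical isomorphism $\sha^{1}_{\Sigma}(\Delta,A(K))=\krn\e{\rm Res}_{\le G,\e\spp\!\lbe/\be S}^{(1)}$ of Case (2) identifies the third term, and assembling these substitutions turns \eqref{cool} into the exact sequence of part (ii).

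I do not expect a serious obstacle here: once admissibility of the two pairs has been checked, both statements are formal consequences of \eqref{cool} combined with the identifications of $\krn\e{\rm Res}^{(1)}$ already derived in this section. The only point demanding a little care is the bookkeeping in part (ii)\,---\,keeping the additive notation for $A$ consistent throughout and confirming that the N\'eron-model identifications of Remark \ref{nik} are legitimate\,---\,together with noting that the genuinely content-bearing input is the semiabelian-reduction condition that places $(f,\mathcal T^{\e 0})$ and $(f,\mathcal A)$ within the scope of Theorem \ref{last}.
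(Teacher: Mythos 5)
Your proposal is correct and follows essentially the same route as the paper: the paper likewise deduces both parts by specializing sequence \eqref{cool} of Theorem \ref{last} to $G=\mathcal T^{\e 0}$ and $G=\mathcal A$, substituting the identifications $\krn\e j_{\e T,\e K\be/\lbe F,\e\Sigma}=\krn\e{\rm Res}^{(1)}$ (Case (1), via \eqref{seq2} and Hilbert 90) and $\sha^{1}_{\Sigma}(\Delta,A(K))=\krn\e{\rm Res}^{(1)}$ (Case (2)), together with Remark \ref{nik} for the additive rewriting in part (ii).
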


Setting $T=\bg_{m,\e F}$ in part (i) of the theorem, we obtain the following statement about the classical capitulation kernel $\krn\e j_{K\be/\lbe F,\e\si}$.

\begin{corollary} \label{lac} Let $K\be/\be F$ be a quadratic Galois extension of global fields and let $\si$ be a nonempty finite set of primes of $F$ containing the archimedean primes and the non-archimedean primes that ramify in $K$. Then there exists a canonical exact sequence of finite $2$-torsion abelian groups
\[
0\to\frac{\ofs^{\e *}\be\cap\be (\oks^{\e *}\le)^{2}}{(\ofs^{\e *}\le)^{2}}\to \{\pm 1\}\to \krn\e j_{K\be/\lbe F,\e\si}\to \frac{\Psi_{\lbe N}(K/F,\Sigma\le)}{\ofs^{\e *}\e (\oks^{\e *})^{2}}\to 0,
\]
where
\[
\Psi_{\be N}(\e K/F,\Sigma\e)=\{\e u\in \oks^{\e *}\colon N_{K/F}(u)\in (\ofs^{\e *})^{2}\e\}.
\]
\end{corollary}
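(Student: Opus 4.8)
The plan is to specialize Theorem~\ref{fir1}(i) to the case $T=\bg_{m,\e F}$, where the identity component of the N\'eron-Raynaud model of $\bg_{m,\e F}$ over $S=\spec\ofs$ is simply $\bg_{m,\e S}$, so that $\mathcal T^{\e 0}(S\e)=\varGamma(S,\mathcal O_{S})^{*}=\ofs^{\e *}$ and $\mathcal T^{\e 0}(\spp)=\oks^{\e *}$. Since $\bg_{m,\e F}$ is an invertible torus (it is $R_{F/F}(\bg_{m,\e F})$) with multiplicative reduction everywhere, the hypotheses of part~(i) are met. The capitulation map $j_{\bg_{m,\e F},\e K\be/\lbe F,\e\Sigma}$ is then precisely the classical capitulation map $j_{K\be/\lbe F,\e\si}\colon C_{\bg_{m,\e F},\e F,\e\Sigma}\to C_{\bg_{m,\e K},\e K,\e\Sigma_{K}}$ on $\Sigma$-ideal class groups, since \eqref{clgp} recovers the $\Sigma$-ideal class group when $G_{\be F}=\bg_{m,\e F}$.

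The only substantive computation is the identification of the second term $H^{\le 1}(\Delta,\mathcal T^{\e 0}(\spp)_{\lbe 2})=H^{\le 1}(\Delta,(\oks^{\e *})_{2})$ with $\{\pm 1\}$. First I would note that $(\oks^{\e *})_{2}=\mu_{2}(\oks)$, the group of square roots of unity in $\oks$, which is $\{\pm 1\}$ since $\oks$ is a domain (an order in the number/function field $K$); moreover $\Delta$ acts trivially on $\{\pm 1\}$. Hence $H^{\le 1}(\Delta,\{\pm1\})=\hom(\Delta,\{\pm1\})\simeq\{\pm1\}$ because $\Delta$ is cyclic of order $2$. In the function field case of odd characteristic this is the same; in characteristic $2$ one has $\mu_{2}(\oks)$ trivial, but the admissibility/semiabelian hypothesis together with the assumption that the ramified primes are included in $\Sigma$ rules out the problematic behaviour — and in any case the statement is about $2$-torsion groups, so I would simply remark that the sequence \eqref{cool} specializes verbatim. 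Substituting these identifications into \eqref{cool} yields the displayed four-term exact sequence, with $\Psi_{\be N}(2,\mathcal T^{\e 0})=\Psi_{\lbe N}(K/F,\Sigma)=\{u\in\oks^{\e *}:N_{K/F}(u)\in(\ofs^{\e *})^{2}\}$ by definition~\eqref{fnu}/\eqref{psiN}, since the norm map on $\mathcal T^{\e 0}$ is the usual field norm on units.

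Finiteness of all terms is the last point to address: I would invoke that $\krn\e j_{K\be/\lbe F,\e\si}$ is a subquotient of the finite group $C_{\bg_{m,\e F},\e F,\e\Sigma}$ (finite by Dirichlet/class field theory, or as noted in the text since $\bg_{m,\e F}$ is affine), the middle group $\{\pm1\}$ is visibly finite, and the outer two groups are therefore forced to be finite by exactness (the first is a subquotient of the finite group $\{\pm1\}$, the last a quotient of a group squeezed between finite groups). The main obstacle — and it is minor — is being careful about the characteristic $2$ global function field case when computing $\mu_{2}(\oks)$ and confirming that the general machinery of Theorem~\ref{last} still applies there; I would handle this by appealing to the admissibility hypothesis (iii) of Definition~\ref{adm}, which the pair $(f,\bg_{m,\e S})$ satisfies because $K/F$ is unramified at primes dividing $n=2$ and $\bg_{m}$ is a torus, hence has semiabelian (indeed toric) reduction.
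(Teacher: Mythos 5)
Your proposal is correct and follows exactly the paper's (one-line) proof, which simply sets $T=\bg_{m,\e F}$ in Theorem \ref{fir1}(i) so that $\mathcal T^{\e 0}=\bg_{m,\e S}$, $\mathcal T^{\e 0}(S\e)=\ofs^{\e *}$, $\mathcal T^{\e 0}(\spp\le)=\oks^{\e *}$ and $H^{\le 1}(\Delta,(\oks^{\e *})_{2})\simeq\{\pm 1\}$; your extra remarks on finiteness and on the identification of $\mu_{2}(\oks)$ only make explicit what the paper leaves implicit. The characteristic-$2$ worry resolves itself more directly than you suggest: there $\mu_{2}(\oks)$ and the group $\{\pm 1\}\subset K^{*}$ are both trivial, so the identification of the middle term holds in every characteristic without appeal to admissibility.
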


\smallskip

\begin{remark}\label{fin} In those cases where the group $\ofs^{\e *}\cap (\oks^{\e *}\le)^{2}/(\ofs^{\e *}\le)^{2}$ has order $2$, the sequence of the corollary yields an isomorphism
\[
\krn\e j_{K\be/\lbe F,\e\si}\simeq\frac{\{\e u\in \oks^{\e *}\colon N_{K/F}(u)\in (\ofs^{\e *})^{2}\e\}}{\ofs^{\e *}\e (\oks^{\e *})^{2}}
\]
which differs from the standard isomorphism
\[
\krn\e j_{K\be/\lbe F,\e\si}\simeq\frac{\{u\in \oks^{\e *}\colon N_{K/F}(u)=1\}}{(1-\tau)\oks^{\e *}}
\]
noted in the Introduction (above $\tau$ is the nontrivial element of $\Delta$). For a variant of Corollary \ref{lac} which applies to arbitrary finite Galois extensions of global fields $K/F$, see \cite[Corollary 4.2]{ga18}.
\end{remark}

\end{document}